\title{New inverse and implicit function theorems for differentiable maps with isolated critical points}
\author[$\dag$]{Liangpan Li}
	\affil[$\dag$]{School of Mathematics, Shandong University, Jinan, Shandong, 250100, China}	
	\affil[ ]{liliangpan@gmail.com}
\date{}
\newtheorem{theorem}{Theorem}[section]
\newtheorem{lemma}[theorem]{Lemma}
\newtheorem{prop}[theorem]{Proposition}
\theoremstyle{definition}
\newtheorem{example}[theorem]{Example}
\numberwithin{equation}{section}
\begin{document}

\maketitle

\begin{abstract}

The central purpose of this article is to establish new
inverse and implicit function theorems for differentiable maps with isolated critical points.
One of the key ingredients is a discovery of the fact that differentiable maps with isolated critical points are discrete maps,
which means that algebraic topology methods could then be deployed to
explore relevant questions.
We also provide a purely topological version of  implicit function theorem for continuous maps
that still possesses unique existence and continuity.
All new results of the paper are optimal with respect to the choice of dimensions.

\noindent
{\bf Keywords: Implicit function theorem, inverse function theorem, critical point, Sch\"{o}nflies theorem, Jordan-Brouwer separation theorem,
 non-null homotopy,  path-connectedness,
simply connectedness }
\medskip

\noindent
{\bf Mathematics Subject Classifications (2020):} Primary 26B10; Secondary 54C10
\end{abstract}

\pagestyle{fancy}
\fancyhf{}
\fancyhead[LE,RO]{\thepage}
\fancyhead[LO]{\small{Differentiable maps with isolated critical points}}
\fancyhead[RE]{\small{L. Li}}

\section{Introduction}

The first rigorous proof of the scalar-valued implicit function theorem (ImFT)
was given by Ulisse Dini
at the University of Pisa in around 1877  \cite{Dini,Don,Hu,Krantz,Scarpello}.
His method was then extended by mathematical induction
to the full version of vector-valued maps \cite{Ar,Fi,Goursat1905,Oliveira,Zorich}.
Using the idea of successive approximation, nowadays regarded
 as a prototype of  the contraction mapping principle in functional analysis,
Edouard Goursat \cite{Goursat} gave a succinct  proof of  the ImFT in 1903 (see also \cite{Brooks,Zhang}).
Without making iterative  induction on dimensions, Gilbert A. Bliss
 (1876--1951, \cite{Graves,Mc}) \cite{Bliss}
provided a straightforward calculus-level proof of the ImFT in 1912.
Unfortunately, such a beautiful work has rarely been seriously discussed
over the past 100 years.
Alternatively,  the classical ImFT can be deduced as an application from the classical inverse function theorem (InFT)
 \cite{Blackadar,Coleman,Don,Garling,Golu,Hirsch,Krantz,Munkres,Rudin,Spivak,Tao,Taylor}.
 We also refer the interested reader to \cite{Blackadar15,Don,Krantz,Oliveira16,Oliveira18,Pugh} for other proofs of the classical ImFT and its generalizations.

The central purpose of this article is to
establish new
  inverse and implicit  function theorems for differentiable maps with isolated critical points.

Recall first a classical result proved independently
 S. Radulescu and
M. Radulescu in 1989 \cite{Ra}, and J. Saint Raymond in 2002 \cite{SRaymond} (see also \cite{Tao11} by T. Tao in 2011).

\begin{theorem}\label{strengthenedInFT} Let $\Omega$ be an open subset of $\mathbb{R}^n$, and let $H:\Omega\rightarrow\mathbb{R}^n$
be a differentiable map without critical points. Then $H$ is a local diffeomorphism.
\end{theorem}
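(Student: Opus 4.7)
The plan is to verify local diffeomorphism at an arbitrary $x_0\in\Omega$; by pre-composing with a translation and post-composing with the invertible linear map $DH(x_0)^{-1}$, I may assume $x_0=0$, $H(0)=0$, and $DH(0)=I$. Differentiability at $0$ then furnishes $r>0$ with $|H(x)-x|\le|x|/2$ on $\overline{B(0,r)}$; this is the only direct quantitative estimate available. For local surjectivity I apply Brouwer's fixed-point theorem: for $|y|\le r/2$, the continuous self-map $\phi_y(x)=x-H(x)+y$ of $\overline{B(0,r)}$ is well-defined since $|\phi_y(x)|\le|x|/2+r/2\le r$, and fixed points of $\phi_y$ are precisely solutions of $H(x)=y$; thus $B(0,r/2)\subset H(\overline{B(0,r)})$.

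The main obstacle is local injectivity, because pointwise differentiability at $0$ gives no Lipschitz control on $H(x)-H(x')$ when $x$ and $x'$ are both different from $0$ — this is precisely why the classical contraction-mapping proof of the inverse function theorem collapses outside $C^1$. I plan to bypass this with Brouwer degree theory. The linear homotopy $H_t=(1-t)\mathrm{Id}+tH$ satisfies $|H_t(x)|\ge|x|/2$ on $\partial B(0,r)$, so $\deg(H,B(0,r),0)=\deg(\mathrm{Id},B(0,r),0)=1$, and by continuity of the degree in the target value, $\deg(H,B(0,r),y)=1$ for every $y$ in a small neighborhood of $0$. Since $DH(p)$ is invertible at every point $p$, the Taylor expansion $H(x)=H(p)+DH(p)(x-p)+o(|x-p|)$ forces $|H(x)-H(p)|\ge c_p|x-p|$ near $p$, so each preimage of $y$ is isolated and contributes a local Brouwer index $\pm 1$; compactness then makes the fiber finite. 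Upgrading the signed count $1$ to a cardinality of exactly $1$ is the hardest step and requires ruling out cancellation of indices; I intend to exploit that $DH(0)=I$ is orientation-preserving, together with the pointwise Brouwer fixed-point argument rerun at each candidate preimage, and a clopen/connectedness argument on the set of $y\in B(0,r/2)$ admitting a unique preimage in $B(0,r)$.

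Once $H$ is a local homeomorphism at $0$, differentiability of $H^{-1}$ at $0$ with derivative $I$ follows directly: writing $x=H^{-1}(y)$ gives $y=H(x)=x+o(|x|)$, and the lower bound $|H(x)|\ge|x|/2$ forces $|x|\le 2|y|$, so $H^{-1}(y)=y+o(|y|)$. Running the same pointwise argument at every point of a neighborhood of $0$ then delivers the stated local diffeomorphism.
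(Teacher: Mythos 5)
The paper does not actually prove Theorem \ref{strengthenedInFT}: it is recalled as a classical result of Radulescu--Radulescu, Saint Raymond and Tao, so the only question is whether your sketch is itself a complete proof, and it is not. The parts you work out are fine under mere pointwise differentiability: the normalization, the estimate $|H(x)-x|\le |x|/2$, the Brouwer fixed-point argument giving $B(0,r/2)\subset H(\overline{B(0,r)})$, the homotopy $(1-t)\mathrm{Id}+tH$ showing $\deg(H,B(0,r),y)=1$ for $|y|<r/2$, the isolation of preimages and the fact that each isolated preimage $p$ carries local index $\operatorname{sign}\det DH(p)=\pm1$, and the concluding differentiability of the local inverse. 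But the step you yourself call the hardest --- upgrading the signed count $1$ to a fiber of cardinality exactly $1$, i.e.\ excluding cancellation of indices of opposite sign --- is precisely the substance of the theorem, and for it you offer only a declared intention. Knowing $DH(0)=I$ controls the index at one candidate preimage; at another preimage $p$ the sign of $\det DH(p)$ is a priori unconstrained, because $\det DH$ need not be continuous when $H$ is only differentiable, so orientation-preservation at the origin does not propagate.

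The proposed clopen/connectedness argument on the set $U$ of values $y$ with a unique preimage does not close this gap. To prove $U$ open at $y_0$ with unique preimage $x_0$ you must rule out two distinct preimages $x_k\neq x_k'$ of values $y_k\to y_0$, both necessarily converging to $x_0$; the only available expansion, $H(x_k)-H(x_k')=DH(x_0)(x_k-x_k')+o(|x_k-x_0|)+o(|x_k'-x_0|)$, has error terms that are not $o(|x_k-x_k'|)$ --- exactly the absence of Lipschitz control you flagged at the outset --- and closedness of $U$ is no easier. The cited proofs supply a genuinely additional idea at this point, for instance Saint Raymond's Darboux-type (intermediate-value) property for Jacobian determinants of differentiable maps, which shows $\det DH$ cannot change sign on a ball on which it never vanishes; then every local index is $+1$ and the degree count forces injectivity. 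Without an ingredient of this kind, or some substitute for it, your outline has a gap at its central point.
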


In the case of $H$ having  isolated critical points, we will achieve a quite
 useful result which ensures that algebraic topology methods could then be deployed to further explore
 the local behaviour of $H$ at these points.

\begin{theorem}\label{Lemma35}
Let $\Omega$ be an open subset of $\mathbb{R}^n$, and let $H:\Omega\rightarrow\mathbb{R}^n$
be a differentiable map. If $x_0\in\Omega$
is an isolated critical point of $H$, then there exists an open neighborhood of $x_0$ on which $H$
assumes the value $H(x_0)$ only at $x_0$.\end{theorem}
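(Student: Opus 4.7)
I argue by contradiction: assume $x_0$ is not isolated in $H^{-1}(H(x_0))$. After translation I take $x_0 = 0$ and $H(x_0) = 0$, and fix a sequence $x_k \to 0$ with $x_k \neq 0$ and $H(x_k) = 0$. Choose $r_0 > 0$ so that $\overline{B(0,r_0)} \subset \Omega$ contains no critical point other than $0$; by Theorem~\ref{strengthenedInFT} the restriction $H|_{B(0,r_0) \setminus \{0\}}$ is then a local diffeomorphism, so $H^{-1}(0) \cap (B(0,r_0) \setminus \{0\})$ is discrete and hence at most countable. The set of radii $r$ with $0 \in H(S(0,r))$ (namely the norms of such preimages) is therefore countable, so I may shrink $r_0$ slightly to arrange $0 \notin H(S(0,r_0))$. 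For each sufficiently large $k$, Theorem~\ref{strengthenedInFT} supplies an open ball $V_k$ about $x_k$, contained in $B(0,r_0) \setminus \{0\}$, on which $H$ restricts to a diffeomorphism onto the open set $W_k := H(V_k) \ni 0$; shrinking the $V_k$ I may take them pairwise disjoint, and I pick $\alpha_k > 0$ with $B(0,\alpha_k) \subset W_k$.

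The core step is a preimage count. For any $y \neq 0$ with $y \notin H(S(0,r_0))$, every element of the fiber $F(y) := H^{-1}(y) \cap \overline{B(0,r_0)}$ is a regular point lying in $B(0,r_0) \setminus \{0\}$; since $F(y)$ is both discrete (Theorem~\ref{strengthenedInFT} makes $H$ locally injective at each such point) and compact (closed in the compact ball), it is finite, and I set $N(y) := |F(y)|$. Using the local diffeomorphism property at each element of $F(y)$ to propagate preimages under small perturbations of $y$ gives $N(y') \geq N(y)$ for $y'$ near $y$; properness of $H|_{\overline{B(0,r_0)}}$ then prevents any additional preimages of $y'$ from accumulating outside a preselected disjoint family of neighborhoods of $F(y)$, giving the reverse inequality. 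Hence $N$ is locally constant on $\mathbb{R}^n \setminus (\{0\} \cup H(S(0,r_0)))$. Let $C$ be the connected component of $0$ in $\mathbb{R}^n \setminus H(S(0,r_0))$; for $n \geq 2$ the punctured set $C \setminus \{0\}$ remains connected (the case $n = 1$ is handled componentwise), and on it $N$ takes a single finite value $N_C$.

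To finish, fix an arbitrary $K$ and put $\beta_K := \min_{1 \leq k \leq K} \alpha_k > 0$. Any $y$ in the non-empty open set $(B(0,\beta_K) \cap C) \setminus \{0\}$ lies in every $W_k$ with $k \leq K$, so each inverse diffeomorphism $(H|_{V_k})^{-1}$ produces a preimage of $y$ inside $V_k \subset \overline{B(0,r_0)}$; the disjointness of the $V_k$ makes these $K$ preimages distinct, whence $N(y) \geq K$ and $N_C \geq K$. Letting $K \to \infty$ contradicts the finiteness of $N_C$. The main obstacle I anticipate is the careful justification of the local constancy of $N$: only differentiability (not $C^1$-smoothness) is assumed, so no uniform control of the derivatives $H'(x_k)$ is available, and one must lean purely on Theorem~\ref{strengthenedInFT} together with the properness of $H$ on the compact ball $\overline{B(0,r_0)}$.
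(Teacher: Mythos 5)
Your proposal is correct, but it takes a genuinely different route from the paper's proof. The paper fixes a sphere on which $|H|$ has a positive minimum $N$ and then counts, for $r\in(0,N)$, the number $X(r)$ of path-connected components of $H^{-1}(B_r)$ that contain zero-value points of $H$; the heart of that proof is showing this integer-valued function of the radius is both left and right continuous (the right continuity needing the auxiliary fact that distinct components stay a positive distance $d(r)$ apart), hence constant, while $X(r)\rightarrow\infty$ as $r\rightarrow0$. You instead count fiber points: after arranging $0\notin H(S(0,r_0))$, you show the cardinality $N(y)$ of $H^{-1}(y)\cap\overline{B(0,r_0)}$ is finite and locally constant on $\mathbb{R}^n\setminus(\{0\}\cup H(S(0,r_0)))$ --- the classical ``stack of records'' argument for a local homeomorphism that is proper on a closed ball, with Theorem~\ref{strengthenedInFT} supplying local injectivity at every fiber point --- and then force $N$ to exceed every $K$ near the origin via the disjoint charts $V_k$ around the accumulating zeros $x_k$, contradicting constancy on the punctured component $C\setminus\{0\}$. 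Your route buys a shorter, more standard argument and avoids the paper's delicate two-sided continuity and minimal-distance claims, at the cost of needing connectedness of $C\setminus\{0\}$, which forces the separate (easy) treatment of $n=1$; the paper's component-counting machinery is uniform in the dimension and is reused later (compare Theorems~\ref{THM42} and~\ref{propA}), which a fiber count does not directly provide. The only details you should spell out are in the local constancy step --- choose the disjoint neighborhoods of the fiber via Theorem~\ref{strengthenedInFT} inside $B(0,r_0)\setminus\{0\}$, so injectivity on each gives at most one preimage while compactness of $\overline{B(0,r_0)}$ minus their union excludes stray preimages of nearby values --- and the remark that the $x_k$ may be taken pairwise distinct so the $V_k$ can indeed be made disjoint; both are routine.
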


Let us recall a concept \cite{Biasi,Gran}: a map $H$ between two topological spaces is said to be discrete at
a given point $x_0$ if there exists an open neighborhood of $x_0$ on which $H$ assumes the value
$H(x_0)$ only at $x_0$. Theorem \ref{Lemma35} then means that Euclidean vector fields
with isolated critical points are discrete maps.

Based on Theorem \ref{Lemma35} and some other properties
such as simply-connectedness of spheres, we will establish an inverse functions theorem as follows.

\begin{theorem}\label{THM12}
Let $\Omega$ be an open subset of $\mathbb{R}^m$ ($m\geq3$), and let $H:\Omega\rightarrow\mathbb{R}^m$
be a differentiable map with at most finitely many critical points.
Then $H$ is a local homeomorphism.
\end{theorem}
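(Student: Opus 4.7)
The plan is to handle non-critical points via Theorem \ref{strengthenedInFT} and each of the finitely many critical points via a covering-space argument that exploits the simple-connectedness of $S^{m-1}$ for $m \geq 3$. Let $x_0$ be a critical point. By Theorem \ref{Lemma35} together with finiteness of the critical set, I choose an open ball $B = B(x_0, r)$ with $\overline{B} \subset \Omega$ such that $x_0$ is the unique critical point of $H$ in $\overline{B}$ and $H^{-1}(H(x_0)) \cap \overline{B} = \{x_0\}$. Then $H(\partial B)$ is compact and avoids $H(x_0)$, so $\delta := \mathrm{dist}(H(x_0), H(\partial B)) > 0$, and for any $s \in (0, \delta)$ the closed ball $\overline{B(H(x_0), s)}$ is disjoint from $H(\partial B)$.

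Next I set $U := H^{-1}(B(H(x_0), s)) \cap B$ and let $V$ be the connected component of $x_0$ in $U$. Then $V$ is open in $\mathbb{R}^m$, and its closure in $\mathbb{R}^m$ remains inside $B$ because any limit point on $\partial B$ would force $H$ to send it into $H(\partial B) \cap \overline{B(H(x_0), s)} = \emptyset$. Consequently $H|_V \colon V \to B(H(x_0), s)$ is a proper continuous map, and since $H^{-1}(H(x_0)) \cap V = \{x_0\}$, the restriction $H|_{V \setminus \{x_0\}} \colon V \setminus \{x_0\} \to B(H(x_0), s) \setminus \{H(x_0)\}$ is also proper. By Theorem \ref{strengthenedInFT} this restriction is a local diffeomorphism, so its image is open; by properness it is closed; the target is connected, so the restriction is surjective. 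A proper surjective local homeomorphism between locally compact Hausdorff spaces is a (finite-sheeted) covering map.

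At this point the dimension hypothesis $m \geq 3$ is used decisively: the base $B(H(x_0), s) \setminus \{H(x_0)\}$ deformation retracts onto $S^{m-1}$ and is therefore simply connected, while $V \setminus \{x_0\}$ is connected (since an open connected subset of $\mathbb{R}^m$ stays connected after removing one point when $m \geq 2$). A covering map from a connected space onto a simply connected space is a homeomorphism, so $H$ is injective on $V \setminus \{x_0\}$ and, by the singleton preimage of $H(x_0)$, on all of $V$. Invariance of domain then upgrades $H|_V$ to a homeomorphism onto the open set $B(H(x_0), s)$, exhibiting $H$ as a local homeomorphism at $x_0$. Combined with Theorem \ref{strengthenedInFT} at the non-critical points, this gives the local-homeomorphism conclusion globally.

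The main obstacle I anticipate is the extraction of a neighborhood on which the restriction of $H$ is proper: one cannot simply use the full preimage $U$, since other components might approach $\partial B$ or fail to surject onto the punctured ball; isolating the connected component of $x_0$ and verifying that its closure in $\mathbb{R}^m$ remains compactly contained in $B$ is the essential point-set-topology input that allows the covering-space machinery and invariance of domain to be invoked cleanly.
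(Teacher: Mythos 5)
Your argument is correct, and it reaches the conclusion by a genuinely different route than the paper. The paper deduces Theorem \ref{THM12} from Theorem \ref{P48}: there the author studies the path-connected component $\mathscr{X}$ of $H^{-1}(B(y_0,r))$ containing $x_0$, proves its boundary is a compact $(m-1)$-manifold each of whose components maps properly and locally homeomorphically onto $\partial B_r$, applies Lemma \ref{HOM} (Ho's theorem) to conclude each boundary component is a topological $(m-1)$-sphere, invokes the Jordan--Brouwer separation theorem to show the boundary is connected, and then proves surjectivity via the level spheres $\mathscr{Y}_q$, injectivity via a second Jordan--Brouwer argument combined with the open-mapping Lemma \ref{THMopen2} (minimum-modulus reasoning), and continuity of the inverse by compactness. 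You instead puncture: after using Theorem \ref{Lemma35} and finiteness of the critical set to arrange $H^{-1}(H(x_0))\cap\overline{B}=\{x_0\}$ and a single critical point in $\overline{B}$, you isolate the component $V$ of $H^{-1}(B(H(x_0),s))\cap B$ through $x_0$, verify $\overline{V}\subset B$ so that $H|_{V\setminus\{x_0\}}$ is a proper local homeomorphism onto the punctured ball, and then use that a proper local homeomorphism onto a locally compact Hausdorff, connected base is a covering, trivial over the simply connected punctured ball when $m\geq3$; injectivity plus invariance of domain finishes it. The algebraic-topology kernel is the same (properness plus simple connectedness in dimension $\geq3$; your covering-triviality step is essentially Lemma \ref{HOM} applied directly to $V\setminus\{x_0\}\rightarrow B(H(x_0),s)\setminus\{H(x_0)\}$, and you could cite it as such), but your application of it to the punctured component bypasses the boundary-manifold analysis, both uses of Jordan--Brouwer, and the open-mapping/minimum-modulus injectivity argument, giving a noticeably shorter proof of Theorem \ref{THM12} itself. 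What the paper's longer route buys is the finer structural output of Theorem \ref{P48} — the component is a topological open $m$-ball whose boundary is a topological $(m-1)$-sphere, in all dimensions $m\geq2$ — which is reused for the planar winding-number analysis of Section \ref{SEC4} and for Theorem \ref{THM11}, whereas your argument is specific to $m\geq3$ and yields no information about $\partial V$. The one step you should spell out is the properness of $H|_V$: for compact $K\subset B(H(x_0),s)$, a limit point of $V\cap H^{-1}(K)$ lies in $\overline{V}\subset B$ and in the open set $H^{-1}(B(H(x_0),s))\cap B$, hence in the component $V$ itself; this is exactly the point-set detail your final paragraph anticipates, and it goes through.
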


The examples $x\mapsto x^2$ $(x\in\mathbb{R})$ and $z\mapsto z^2$ $(z\in\mathbb{C})$
show that this result is optimal with respect to the choice of dimensions.
 Interestingly enough, Theorem \ref{THM12}
can be improved infinitely many times and we shall propose a potential limit at the end of Section \ref{section3}.

Next, we move on to establish implicit function theorems.

\begin{theorem}\label{THM11}
Let $\Omega$ be an open subset of $\mathbb{R}^{n}\times\mathbb{R}^m$, $n\geq1$, $m\geq2$, and
let $F=F(x,y)$
be a continuous map from  $\Omega$   to $\mathbb{R}^m$.
Assume $(x_0,y_0)$ is a point of $\Omega$ such that
the restriction of $F$ onto the section
\[\Omega_{x_0}\doteq\{y\in\mathbb{R}^m:(x_0,y)\in\Omega\}\]
is a differentiable map having $y_0$ as one of its isolated critical points.
Then  there exists an open neighbourhood $U\times V\subset \Omega$
of $(x_0,y_0)$
and a map $g:U\rightarrow V$  such that $F(x,g(x))=F(x_0,y_0)$
for all $x\in U$. Moreover, $g$ is continuous at $x_0$ as long as $V$ is  sufficiently small.
\end{theorem}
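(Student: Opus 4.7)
The plan is to combine Theorem~\ref{Lemma35} with a Brouwer-degree (equivalently, non-null-homotopy) argument applied to the family $G_x(y):=F(x,y)-F(x_0,y_0)$. Writing $H:=G_{x_0}$, Theorem~\ref{Lemma35} makes $y_0$ an isolated zero of the differentiable map $H$, and after shrinking I pick $\rho_0>0$ with $\bar B(y_0,\rho_0)\subset\Omega_{x_0}$ in which $y_0$ is simultaneously the unique zero of $H$ and its unique critical point. Theorem~\ref{strengthenedInFT} then shows that $H$ is a local diffeomorphism on the connected punctured ball $B(y_0,\rho_0)\setminus\{y_0\}$, which (since $m\geq 2$) forces $\det DH$ to keep a constant sign there.

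For any $\rho\in(0,\rho_0]$, set $\delta_\rho:=\min_{\partial B(y_0,\rho)}|H|>0$. By continuity of $F$ and compactness, there is an open neighbourhood $U_\rho$ of $x_0$ with $U_\rho\times\bar B(y_0,\rho)\subset\Omega$ and $\sup_{y\in\bar B(y_0,\rho)}|F(x,y)-F(x_0,y)|<\delta_\rho/2$ for every $x\in U_\rho$. Then $|G_x|>\delta_\rho/2$ on $\partial B(y_0,\rho)$, and the straight-line homotopy $(1-t)H+tG_x$ avoids $0$ on this sphere, so $\deg(G_x,B(y_0,\rho),0)=\deg(H,B(y_0,\rho),0)$. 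Granting the key point that this common value is nonzero, each $x\in U_\rho$ admits some $y\in B(y_0,\rho)$ with $F(x,y)=F(x_0,y_0)$, and I define $g(x)$ to be any such $y$ (with $g(x_0):=y_0$).

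The main obstacle is showing $\deg(H,B(y_0,\rho),0)\neq 0$; I would argue by contradiction. Because $y_0$ is the only critical point of $H$ in $B(y_0,\rho)$, the only critical value is $0$, and every $v\neq 0$ is a regular value whose preimages lie in $B(y_0,\rho)\setminus\{y_0\}$ where Theorem~\ref{strengthenedInFT} applies. If the degree at $0$ vanished, homotopy invariance in the connected set $B(0,\delta_\rho)\subset\mathbb{R}^m\setminus H(\partial B(y_0,\rho))$ would give $\deg(H,B(y_0,\rho),v)=0$ for every $v\in B(0,\delta_\rho)$; for $v\neq 0$, the excision property together with the constant sign of $\det DH$ on $B(y_0,\rho)\setminus\{y_0\}$ converts this degree into the uniformly signed count $\pm\#(H^{-1}(v)\cap B(y_0,\rho))$, forcing $H^{-1}(v)\cap B(y_0,\rho)=\emptyset$. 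Hence $H(B(y_0,\rho))\cap B(0,\delta_\rho)\subset\{0\}$. But continuity of $H$ at $y_0$ places $H(y)\in B(0,\delta_\rho)$ for every $y$ close enough to $y_0$, forcing $H(y)=0$ on a whole neighbourhood of $y_0$ and contradicting that $y_0$ is an isolated zero.

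For the continuity of $g$ at $x_0$ (which holds once $V=B(y_0,\rho)$ is fixed sufficiently small), the key observation is that the construction above applies verbatim to every smaller radius $\rho'\in(0,\rho]$, producing a corresponding neighbourhood $U_{\rho'}\subset U_\rho$ of $x_0$ on which $G_x$ possesses a zero inside $B(y_0,\rho')\subset V$. Choosing $g(x)$ inside the smallest available such ball then forces $g(x)\to y_0$ as $x\to x_0$, giving the required continuity at $x_0$.
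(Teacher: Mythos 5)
Your strategy is sound and it is genuinely different from the paper's. The paper never invokes Brouwer degree as such: for $m\geq 3$ it first proves Theorem~\ref{THM12} (so that $F(x_0,\cdot)$ is a local homeomorphism near $y_0$), then transports the boundary sphere by the local inverse $G^{-1}$ and exhibits an explicit homotopy of the perturbed boundary map to the identity on $\mathbb{S}^{m-1}$, concluding by the No Retraction Theorem via Proposition~\ref{prop51}; for $m=2$ it runs a separate argument through the Sch\"onflies theorem and the winding-number analysis of Theorem~\ref{THMTWO}. You instead work directly with $H(y)=F(x_0,y)-F(x_0,y_0)$, use Theorem~\ref{Lemma35} only to make $y_0$ an isolated zero, transfer $\deg(H,B(y_0,\rho),0)$ to $\deg(G_x,B(y_0,\rho),0)$ by the straight-line homotopy, and show the degree at the isolated zero is nonzero by the contradiction argument with regular values $v\neq 0$. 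This buys a uniform treatment of $m=2$ and $m\geq 3$ (no Sch\"onflies, no winding numbers, no appeal to Theorem~\ref{THM12}), at the cost of importing the standard Brouwer degree machinery that the paper deliberately replaces by the single non-null-homotopy Proposition~\ref{prop51}. Your continuity-at-$x_0$ argument via nested radii also works, though the paper's version is simpler: once $V$ is small enough that $y_0$ is the unique solution of $F(x_0,y)=F(x_0,y_0)$ in $\overline V$ (Theorem~\ref{Lemma35}), any selection $g$ is continuous at $x_0$ by a compactness/contradiction argument, with no need to choose $g(x)$ in ever smaller balls.

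One step you assert without justification does need an argument, because $H$ is only differentiable, not $C^1$: the claim that $\det DH$ keeps a constant sign on the punctured ball, and, relatedly, the use of the regular-value formula $\deg(H,B,v)=\sum_{y\in H^{-1}(v)}\operatorname{sign}\det DH(y)$, which is normally proved for $C^1$ maps. Since the partial derivatives of $H$ need not be continuous, ``nonvanishing Jacobian on a connected set'' does not give constant sign for free. Both points are true and repairable: at each isolated preimage the local index equals $\operatorname{sign}\det DH$ by linearization (the homotopy $t\mapsto t\,o(|y-y_0|)$-free comparison of $H$ with its derivative avoids the value near the point), the local index of a local homeomorphism is locally constant, and hence $\operatorname{sign}\det DH$ is constant on the connected punctured ball (alternatively this is the Darboux-type property of \cite{SRaymond}); finiteness of $H^{-1}(v)\cap B(y_0,\rho)$ follows from local injectivity plus the boundary bound $|H|\geq\delta_\rho>|v|$. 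With these justifications supplied, your proof is complete.
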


The polynomial
 \[F:(x,y)=(\left(\begin{array}{c}x_1\\ \vdots\\ x_n
 \end{array}
 \right),y)\mapsto (x_1-1)^2+x_2^2+\cdots+x_n^2+y^2-1\]
at the origin of $\mathbb{R}^n\times\mathbb{R}$ shows that the above ImFT
 is also optimal with respect to the choice of dimensions. We remark that
Theorem \ref{THM11}  extends Alexander and Yorke's result \cite{Al} from the usual setting of
regular points to isolated critical points, both on a single section $\Omega_{x_0}$, when the dimension $m$
is higher than one.

It would be a little bit unpleasant to see, as we will
 provide such examples later on, that the continuity of possible implicit functions derived from
Theorem \ref{THM11} can only be guaranteed at the chosen point $x_0$.
Our next result provides a purely topological ImFT which ensures both local
existence and continuity (see also \cite{Biasi}
for a relevant result).

\begin{theorem}\label{THMtopology}
Let $\Omega$ be an open subset of $\mathbb{R}^{n}\times\mathbb{R}^m$, and
let $F=F(x,y)$
be a continuous map from  $\Omega$   to $\mathbb{R}^m$.
Suppose  for each $x\in\mathbb{R}^n$,
the restriction of $F$ onto the $y$-section
$\Omega_{x}$
is a local homeomorphism.
Then  for any $(x_0,y_0)\in\Omega$, there exists an open neighbourhood $U\times V\subset \Omega$
of $(x_0,y_0)$
and a continuous map $g:U\rightarrow V$  such that for any $(x,y)\in U\times V$, $F(x,y)=F(x_0,y_0)$ if and only if $y=g(x)$.
\end{theorem}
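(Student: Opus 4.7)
My plan begins by choosing $V$ to be a small open ball $B(y_0,r)$ with $\overline{V}\subset \Omega_{x_0}$ on which $F(x_0,\cdot)$ restricts to a homeomorphism onto its image; such a ball exists because $F(x_0,\cdot)$ is a local homeomorphism at $y_0$ by hypothesis. Write $p := F(x_0,y_0)$ and set $2\delta := \mathrm{dist}(p, F(x_0,\partial V)) > 0$, which is positive by injectivity of $F(x_0,\cdot)$ on the compact set $\overline{V}$. I then use compactness of $\overline{V}$ and continuity of $F$ to pick an open neighborhood $U$ of $x_0$ with $\overline{U}\times\overline{V}\subset\Omega$ and $|F(x,y)-F(x_0,y)|<\delta$ for all $(x,y)\in U\times\overline{V}$. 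For each $x\in U$ this forces $|F(x,y)-p|>\delta$ on $\partial V$ while $|F(x,y_0)-p|<\delta$, so the straight-line segment from $F(x,y_0)$ to $p$ stays at distance $>\delta$ from $F(x,\partial V)$.

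To construct $g(x)$, for each $x\in U$ I would lift the straight-line path $\gamma_x(t) = (1-t)F(x,y_0) + tp$ through $F(x,\cdot)$ starting at $y_0$, and set $g(x) := \tilde\gamma_x(1)$. The standard clopen argument on $[0,1]$ applies: the set of $t$ for which the lift exists in $V$ is non-empty, open (the local homeomorphism at the current lift-endpoint extends the lift locally), and closed (a subsequential limit of the lift must stay in $\overline{V}$ by compactness but cannot reach $\partial V$ by the boundary estimate; uniqueness of path-lifts against a local homeomorphism keeps the limit well-defined), hence equals $[0,1]$. Thus $g(x)\in V$ is well-defined and satisfies $F(x,g(x)) = p$.

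For uniqueness, set $A_x := F(x,\cdot)^{-1}(p)\cap V$; by the local-homeomorphism property and the boundary separation, $A_x$ is finite. My plan is to invoke Brouwer degree: since $F(x_0,\cdot)|_V$ is a homeomorphism with $p$ in its image, $\deg(F(x_0,\cdot),V,p)=\pm1$; the straight-line homotopy $H_s(y):=(1-s)F(x_0,y)+sF(x,y)$ avoids $p$ on $\partial V$ for every $s\in[0,1]$ by the same $\delta$-estimate, so $\deg(F(x,\cdot),V,p)=\pm1$. Each $y\in A_x$ contributes a local degree $\pm1$ because $F(x,\cdot)$ is a local homeomorphism there. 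I would then shrink $U$ further so that every point of $A_x$ sits in an arbitrarily small neighborhood of $y_0$ --- otherwise a subsequential limit $y^*\in\overline{V}\setminus\{y_0\}$ would satisfy $F(x_0,y^*)=p$, contradicting injectivity of $F(x_0,\cdot)|_{\overline{V}}$ --- inside which the local degree of $F(x,\cdot)$ is locally constant in $(x,y)$ and therefore matches that of $F(x_0,\cdot)$ at $y_0$. All local degrees then share a common sign, and their sum being $\pm1$ forces $|A_x|=1$.

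Continuity of $g$ at any $x\in U$ is then immediate by sequential compactness: for $x_n\to x$, any convergent subsequence of $g(x_n)$ has limit in $\overline{V}\setminus\partial V=V$ satisfying $F(x,\cdot)=p$, hence equals $g(x)$ by the uniqueness just established. I expect the hardest step to be uniqueness: path-lifting alone yields $g(x)$ but cannot rule out other preimages of $p$ in $V$, since local homeomorphisms between open subsets of $\mathbb{R}^m$ need not be covering maps. Brouwer degree combined with the orientation analysis handles this, although an alternative route through the Jordan-Brouwer separation theorem and the Sch\"onflies theorem --- the tools emphasized in the paper's abstract --- should give a related topological argument.
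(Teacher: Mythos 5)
Your route is correct in outline but genuinely different from the paper's. You get existence by lifting the straight segment from $F(x,y_0)$ to $p$ through the local homeomorphism $F(x,\cdot)$ (the clopen argument is fine: a subsequential limit of the lift lies in $V$ by the boundary estimate, and local injectivity at that limit traps the tail of the lift, so the lift converges and extends), and you get uniqueness from Brouwer degree. The paper instead obtains existence from the No Retraction Theorem via Proposition \ref{prop51}, homotoping the normalized boundary map $z\mapsto F(x,s_2z)/|F(x,s_2z)|$ to the non-null-homotopic one for $x_0$, and obtains uniqueness from its ``high wall'' Theorem \ref{propA}, which is proved by the elementary counting-of-components machinery behind Theorem \ref{T21}; so the paper needs no degree theory at all, at the price of the two-radius juggling with $s_1,s_2,r_1,r_2$. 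Your argument is shorter where it leans on standard but nontrivial degree facts (the local index of an injective continuous map, or of a local homeomorphism at a point, is $\pm1$), and it handles $m=1$ uniformly, which the paper leaves as an exercise. One cosmetic slip: the segment $\gamma_x$ stays at \emph{positive} distance from $F(x,\partial V)$, not necessarily distance $>\delta$; positivity is all you use.

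The one step whose justification does not work as written is the common-sign step in the uniqueness argument. You argue that the local degree of $F(x,\cdot)$ at each point of $A_x$ ``is locally constant in $(x,y)$ and therefore matches that of $F(x_0,\cdot)$ at $y_0$.'' But the only tool available for comparing across different $x$ is homotopy invariance plus additivity on a small ball $W_0\ni y_0$, and that controls only the \emph{sum} of the local indices of $F(x,\cdot)$ over all of its preimages of $p$ in $W_0$, not the index at each individual preimage; concluding that each individual index equals the index of $F(x_0,\cdot)$ at $y_0$ presupposes that there is only one preimage, i.e.\ the uniqueness you are trying to prove. The repair is to drop the comparison with $x_0$ altogether and use orientation coherence of a \emph{fixed} map: for a local homeomorphism $f$ of an open subset of $\mathbb{R}^m$, the function $y\mapsto i(f,y,f(y))$ is locally constant (take a ball $B'\ni y$ on whose closure $f$ is injective; then the index at any nearby $y''$ equals $\deg(f,B',f(y''))$, and $f(y'')$ stays in the component of $\mathbb{R}^m\setminus f(\partial B')$ containing $f(y)$), hence constant on the connected set $V$. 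So for each fixed $x\in U$ all points of $A_x$ carry one sign $\epsilon(x)=\pm1$, and $\pm1=\deg(F(x,\cdot),V,p)=\epsilon(x)\,\sharp A_x$ forces $\sharp A_x=1$. Alternatively, uniqueness follows directly from the paper's Theorem \ref{propA} (after inserting the smaller-radius step so that the two alleged solutions are joined by a segment on which $|F(x,\cdot)-p|$ stays below the wall value), which avoids the orientation discussion entirely.
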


A combination of Theorems \ref{strengthenedInFT} and
 \ref{THMtopology} yields the following ImFT  proved independently by
L. Hurwicz and M. K. Richter  in 1994  \cite{Hu}, and J. Saint Raymond in 2002 \cite{SRaymond}.

\begin{theorem}\label{thm27}
Let $\Omega$ be an open neighbourhood of a point $(x_0,y_0)$ of $\mathbb{R}^{n}\times\mathbb{R}^m$, and
let $F:\Omega\rightarrow\mathbb{R}^m$
be a differentiable map
such that $\frac{\partial F}{\partial y}$ is everywhere invertible.
Then there exists an open neighbourhood $U\times V\subset \Omega$
of $(x_0,y_0)$ and a differentiable map $g:U\rightarrow V$ such that for any $(x,y)\in U\times V$,
$F(x,y)=F(x_0,y_0)$ if and only if $y=g(x).$
\end{theorem}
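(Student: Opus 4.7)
The plan is to derive Theorem \ref{thm27} by combining the two preceding theorems and then upgrading the resulting continuous implicit function to a differentiable one. The combination step verifies the hypothesis of the topological implicit function theorem; the upgrade step is a classical linearization argument.

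First I would fix any $x\in\mathbb{R}^n$ and consider the section map $F(x,\cdot):\Omega_x\to\mathbb{R}^m$. Because $\frac{\partial F}{\partial y}$ is everywhere invertible on $\Omega$, this section map is differentiable with no critical points on the open set $\Omega_x\subset\mathbb{R}^m$ (and if $\Omega_x$ is empty, the assertion is vacuous). Theorem \ref{strengthenedInFT} then ensures that $F(x,\cdot)$ is a local diffeomorphism, and in particular a local homeomorphism. The hypothesis of Theorem \ref{THMtopology} is therefore met, so we obtain a neighbourhood $U\times V\subset\Omega$ of $(x_0,y_0)$ together with a continuous map $g:U\to V$ for which $F(x,y)=F(x_0,y_0)$ on $U\times V$ holds precisely when $y=g(x)$.

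The remaining task is to show that $g$ is differentiable at every $x\in U$. Writing $y=g(x)$, $A=\frac{\partial F}{\partial y}(x,y)$ and $B=\frac{\partial F}{\partial x}(x,y)$, the differentiability of $F$ at $(x,y)$ combined with the identity $F(x+h,g(x+h))=F(x,y)$ (valid for small $h$) yields
\[
0 = Bh + A\bigl(g(x+h)-g(x)\bigr) + o\bigl(\|h\|+\|g(x+h)-g(x)\|\bigr),
\]
whence, after applying $A^{-1}$,
\[
g(x+h)-g(x) = -A^{-1}Bh + o\bigl(\|h\|+\|g(x+h)-g(x)\|\bigr).
\]

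The main obstacle is to upgrade this quasi-linear expansion into the Lipschitz-type bound $\|g(x+h)-g(x)\|=O(\|h\|)$. The key observation is that, by the continuity of $g$ at $x$, for $\|h\|$ sufficiently small the $o$-term on the right is bounded in norm by $\tfrac{1}{2}\bigl(\|h\|+\|g(x+h)-g(x)\|\bigr)$, which permits one to absorb the $\|g(x+h)-g(x)\|$ contribution from the right-hand side into the left and thereby deduce $\|g(x+h)-g(x)\|\le(2\|A^{-1}B\|+1)\|h\|$. Once this Lipschitz control is secured, the error term collapses to $o(\|h\|)$, and we conclude that $g$ is differentiable at $x$ with derivative $-A^{-1}B=-\bigl(\tfrac{\partial F}{\partial y}\bigr)^{-1}\tfrac{\partial F}{\partial x}$ evaluated at $(x,g(x))$, as required.
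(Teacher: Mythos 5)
Your proof follows exactly the route the paper intends: verify via Theorem \ref{strengthenedInFT} that each section $F(x,\cdot)$ is a local homeomorphism, invoke Theorem \ref{THMtopology} to get the continuous implicit function with the if-and-only-if property, and then handle differentiability separately (which the paper explicitly leaves as an exercise). Your linearization-and-absorption argument for that final step is the standard one and is correct, so the proposal is sound and essentially identical in approach to the paper's.
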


There is a tiny issue concerning the differentiability of the implicit function claimed by Theorem \ref{thm27},
which,  not covered by Theorems \ref{strengthenedInFT} and
 \ref{THMtopology},  is left as a simple exercise.

The whole article would not make much sense if we are unaware of some examples of
  higher-dimensional differentiable Euclidean vector fields that possess isolated
critical points. Our final result is an attempt of answering this issue broadly.

\begin{theorem}\label{THM17}
Let $E$ be an arbitrary closed subset of a sphere or  hyperplane in $\mathbb{R}^n$ ($n\geq 2$).
Then there exists a smooth map from $\mathbb{R}^n$ to $\mathbb{R}^n$ whose set of critical points is exactly $E$.
\end{theorem}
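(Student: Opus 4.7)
The plan is to realise $H$ as a triangular perturbation of the identity,
\[
H(x_1,\dots,x_n)\;=\;\bigl(x_1,\dots,x_{n-1},\,f(x_1,\dots,x_n)\bigr),
\]
so that the Jacobian matrix is lower triangular with diagonal entries $1,\dots,1,\partial f/\partial x_n$. Then $\det DH=\partial f/\partial x_n$, and the set of critical points of $H$ coincides with the zero set of $\partial f/\partial x_n$. The entire problem is thereby reduced to producing a smooth function $f$ on $\mathbb{R}^n$ whose last partial derivative is a smooth nonnegative function vanishing precisely on $E$.

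For this I would invoke the classical partition-of-unity fact that every closed subset $C\subset\mathbb{R}^n$ equals $\rho^{-1}(0)$ for some $\rho\in C^{\infty}(\mathbb{R}^n,[0,\infty))$. Since a sphere and a hyperplane are both closed in $\mathbb{R}^n$, any closed subset of either is closed in $\mathbb{R}^n$, so such a $\rho$ exists for our $E$. I would then simply set
\[
f(x_1,\dots,x_n)\;=\;\int_0^{x_n}\!\rho(x_1,\dots,x_{n-1},t)\,dt.
\]
Standard differentiation under the integral sign shows $f\in C^{\infty}(\mathbb{R}^n)$ with $\partial f/\partial x_n=\rho$, and consequently the resulting $H$ is a smooth self-map of $\mathbb{R}^n$ whose critical locus is exactly $\rho^{-1}(0)=E$.

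The only non-routine ingredient is the existence of the smooth $\rho$ with prescribed zero set; everything else is a one-line determinant calculation. For the specific families named in the statement one can in fact bypass any general theorem: $\rho(x)=(|x|^{2}-1)^{2}$ handles $E=S^{n-1}$, and for $E\subset\{x_n=0\}$ it suffices to build a smooth nonnegative function on $\mathbb{R}^{n-1}$ vanishing precisely on the projection of $E$ and multiply by $x_n^{2}$. It is worth remarking that the construction works verbatim for any closed $E\subset\mathbb{R}^n$, so the restriction in the statement to subsets of spheres or hyperplanes is inessential.
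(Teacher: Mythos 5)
Your main construction is correct and it reaches the paper's conclusion by a genuinely different (and in fact more general) route. Both arguments hinge on the same key lemma — every closed subset of $\mathbb{R}^n$ is the zero set of a smooth nonnegative function (partition of unity / Whitney-type) — but after that they diverge. The paper treats the hyperplane and sphere cases separately with tailor-made formulas: for $E=\{0\}\times\widehat{E}$ it uses $H_g(x,z)=(x^3+g(z)x,\,z)$, whose Jacobian determinant $3x^2+g(z)$ vanishes exactly on $E$ because both summands are nonnegative, and for $E\subset\mathbb{S}^{n-1}$ it uses a radial map $re\mapsto(r^3-3r^2+3r+g(e)r^2)e$ in polar coordinates, whose determinant factors so that it vanishes exactly at $r=1$, $g(e)=0$. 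Your triangular map $H=(x_1,\dots,x_{n-1},f)$ with $f=\int_0^{x_n}\rho(x_1,\dots,x_{n-1},t)\,dt$ gives $\det DH=\rho$ directly, so the critical set is $\rho^{-1}(0)=E$ with no case distinction and no geometric adaptation; as you observe, this proves the stronger statement that \emph{every} closed subset of $\mathbb{R}^n$ is the critical set of a smooth self-map, so the restriction to spheres and hyperplanes in the statement is indeed not needed for existence (the paper's choices seem motivated by wanting explicit, symmetric examples tied to its theme of isolated critical points). What the paper's approach buys is concreteness (polynomial/radial formulas, no integral), at the cost of generality; your approach buys uniformity and the stronger result.

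One small correction to your closing aside: for $E\subset\{x_n=0\}$, taking a smooth nonnegative $g$ on $\mathbb{R}^{n-1}$ vanishing exactly on the projection of $E$ and \emph{multiplying} by $x_n^2$ gives $\rho(x)=g(x')x_n^2$, whose zero set is the union of the whole hyperplane $\{x_n=0\}$ with the cylinder $g^{-1}(0)\times\mathbb{R}$, which is strictly larger than $E$; you should \emph{add} $x_n^2$ (i.e., $\rho(x)=g(x')+x_n^2$), which is exactly the mechanism behind the paper's $3x^2+g(z)$. Also, that aside does not really bypass the general lemma, since producing $g$ for an arbitrary closed subset of $\mathbb{R}^{n-1}$ (or of the sphere) still requires it; only the special cases $E=\mathbb{S}^{n-1}$ or $E$ equal to a full hyperplane admit the explicit shortcuts you mention. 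None of this affects your main argument, which stands as written.
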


\textbf{Notions and notations}

\begin{itemize}
  \item A local diffeomorphism ($C^k$-diffeomorphism) is a local homeomorphism such that its local restrictions and local inverse maps are both differentiable ($k$-times continuously differentiable).

      \item The symbols $B$ and $\mathscr{B}$ are reserved  to denote open ball and closed ball, respectively.
     \item The closure (boundary) of a subset $E$ of a topological space is denoted by $\overline{E}$ ($\partial E$).
\end{itemize}

\section{Theorem \ref{Lemma35}: proof and generalization}

We first provide a proof of Theorem \ref{Lemma35}, which is  stated as Theorem \ref{T21} for convenience.

\begin{theorem}\label{T21}
Let $\Omega$ be an open subset of $\mathbb{R}^m$, and let $H:\Omega\rightarrow\mathbb{R}^m$
be a differentiable map. If $x_0\in\Omega$
is an isolated critical point of $H$, then there exists an open neighborhood of $x_0$ on which $H$
assumes the value $H(x_0)$ only at $x_0$.\end{theorem}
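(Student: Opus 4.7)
The plan is a proof by contradiction using Brouwer degree theory when $m \geq 2$, with the one-dimensional case handled separately by Darboux's intermediate value property for derivatives. After translating, assume $x_0 = 0$ and $H(0) = 0$, and suppose for contradiction there is a sequence $x_n \to 0$ with $x_n \neq 0$ and $H(x_n) = 0$. By Theorem~\ref{strengthenedInFT} pick $r > 0$ so that $\overline{B(0,r)} \subset \Omega$, the point $0$ is the only critical point in $\overline{B(0,r)}$, and $H$ restricted to $B(0,r) \setminus \{0\}$ is a local diffeomorphism. Select $r_1 \in (0,r)$ avoiding the countable set of radii for which $\partial B(0,r_1)$ contains a zero of $H$, so that $0 \notin H(\partial B(0,r_1))$, and relabel so that all $x_n$ lie in $B(0,r_1)$. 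For each $n$, choose an open neighbourhood $U_n \ni x_n$ on which $H$ is a diffeomorphism onto the open set $V_n := H(U_n) \ni 0$, arranging the $U_n$ to be pairwise disjoint, to miss the origin, and to lie inside $B(0,r_1)$.

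Assume now $m \geq 2$. The key structural fact is that the local Brouwer degree of $H$ is the same integer $\sigma \in \{+1,-1\}$ at every point of the punctured ball $B(0,r) \setminus \{0\}$. Indeed, at a regular point the local degree coincides with $\operatorname{sgn}\det DH$, and a standard homotopy argument (sliding the reference point $x$ together with its surrounding small sphere $\partial B(x,\varepsilon)$ continuously along a path in the regular set, never crossing a preimage of $H(x)$) shows this integer-valued invariant is locally constant on the regular set. Since $m \geq 2$ makes $B(0,r) \setminus \{0\}$ connected, the sign is globally constant there; after composing $H$ with a reflection of the target if necessary, I may assume $\sigma = +1$.

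The core degree computation goes as follows. The integer $d := \deg(H, B(0,r_1), 0)$ is well defined. For any regular value $y$ in the connected component $C_0$ of $0$ inside $\mathbb{R}^m \setminus H(\partial B(0,r_1))$, the preimage $H^{-1}(y) \cap \overline{B(0,r_1)}$ is compact and discrete --- isolated on $B(0,r_1) \setminus \{0\}$ by the local diffeomorphism property, and not accumulating at $0$ because $H(0) = 0 \neq y$ --- hence finite; homotopy invariance in the target together with $\sigma = +1$ then yield $d = \#\bigl(H^{-1}(y) \cap B(0,r_1)\bigr)$. For any $N \in \mathbb{N}$, the open set $W_N := \bigcap_{n=1}^{N} V_n$ is still a neighbourhood of $0$ and hence meets $C_0 \setminus \{0\}$; any $y$ in this intersection receives a distinct preimage from each of the disjoint sets $U_1,\ldots,U_N$ via the local inverses of $H|_{U_n}$, forcing $d \geq N$. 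Letting $N \to \infty$ contradicts the finiteness of $d$.

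The principal obstacle is establishing the global constancy of the sign of the local degree (equivalently of $\operatorname{sgn}\det DH$) on the punctured ball: because $H$ is only assumed differentiable, the Jacobian need not depend continuously on the point, so the argument cannot rely on continuity of $\det DH$ and must instead be routed through the homotopy invariance of the Brouwer degree. The case $m = 1$ lies outside this framework because the punctured interval splits into two components, but it follows immediately from Darboux's theorem: $H'$ is nonvanishing and possesses the intermediate value property on each of $(-r,0)$ and $(0,r)$, so keeps a constant sign on each, and hence $H$ is strictly monotone on each side and the conclusion is direct.
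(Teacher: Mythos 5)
Your argument is correct, but it takes a genuinely different route from the paper's. You run the preimage count through Brouwer degree: the key lemma is that the local index of $H$ is a locally constant $\pm1$ on the punctured ball (since $H$ is a local homeomorphism there by Theorem \ref{strengthenedInFT}), so for $m\geq2$ connectedness gives one sign $\sigma$; then excision/additivity plus constancy of $\deg(H,B(0,r_1),\cdot)$ on the component of $0$ in $\mathbb{R}^m\setminus H(\partial B(0,r_1))$ forces $\deg(H,B(0,r_1),0)\geq N$ for every $N$, a contradiction, with $m=1$ settled separately by Darboux monotonicity. The paper deliberately avoids degree theory --- this theorem is positioned as the step that \emph{afterwards} licenses algebraic-topological tools: it fixes a sphere on which $|H|$ has positive minimum $N$, notes the zero set in the punctured ball is at most countable, and studies the counting function $X(r)$, the number of path components of $H^{-1}(B_r)$ meeting the zero set; the bulk of the proof is a point-set argument that $X$ is left and right continuous on $(0,N)$, hence constant, contradicting $X(r)\to\infty$ as $r\to0$, and this works uniformly in $m\geq1$. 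Your approach buys brevity at the cost of importing the standard degree package (homotopy invariance, excision, constancy on components, and the local-index computation at a point of mere differentiability with invertible derivative, which needs the linear-homotopy argument precisely because $DH$ is not continuous); the paper's proof is longer but self-contained at the level of elementary topology of $\mathbb{R}^m$, and its counting-function mechanism is reused for Theorem \ref{propA}. Two small points to tidy: the phrase ``regular value'' is unnecessary (and Sard-type arguments are unavailable for merely differentiable maps) --- all you need is $y\neq0$, so every preimage lies in the punctured ball where the index is $\sigma$; and the ``sliding the sphere'' sketch of local constancy of the index is most cleanly replaced by the observation that on a ball $B'$ with $\overline{B'}$ in the regular set on which $H$ is injective, excision identifies the index at each $x\in B'$ with $\deg(H,B',H(x))$, which is constant because $H(B')$ is connected and disjoint from $H(\partial B')$.
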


\begin{proof}
Without loss of generality, we may assume that $x_0=0$, $H(x_0)=0$, and there are no
other critical points of $H$ in $\Omega$ except $x_0$.
Pick an $s>0$ such that the closed ball $\mathscr{B}_s\doteq\{x\in\mathbb{R}^m:|x|\leq s\}$
is contained in $\Omega$. We claim that there exists an $s_0\in[\frac{s}{2},s]$
such that \[N\doteq\min_{|x|=s_0}|H(x)|>0.\]
Suppose this is not the case. For each $\lambda\in[\frac{s}{2},s]$,
one can find a zero-value point $x_{\lambda}$ of $H$ such that $|x_{\lambda}|=\lambda$.
Then picking an  accumulation point $w_0$ of the set
$\{x_{\lambda}:\lambda\in[\frac{s}{2},s]\}$ would yield a contradiction
to the local injectivity of $H$ near $w_0$ that is guaranteed by Theorem \ref{strengthenedInFT}. This proves the claim. For simplicity, it is of no harm  to assume $s=s_0$.
For each $k\in\mathbb{N}$, there are no critical points of $H$ in the annulus $\frac{s}{2^k}\leq|x|\leq\frac{s}{2^{k-1}}$,
on which  there thus are at most finitely many zero-value points of $H$.
Therefore, the set $\mathscr{F}$ of zero-value points of $H$
in $\mathscr{B}_s\backslash\{0\}$ is a finite  or countable set.
If $\mathscr{F}$ is finite, we then can conclude the proof as a sufficiently small neighbourhood of $x_0=0$
meets what the lemma needs. Hence to prove the lemma,
 it remains to show that $\mathscr{F}$ can not be a countable set.
 We argue by contradiction and suppose that $\mathscr{F}$ is formed of a sequence of distinct points
 $\{z_i\}_{i=1}^{\infty}$. Considering the intersection between $\mathscr{F}$
 and each annulus  $\frac{s}{2^k}\leq|x|\leq\frac{s}{2^{k-1}}$ is finite,
 one gets $z_i\rightarrow0$ as $i\rightarrow\infty$. In other words, the bounded set $\mathscr{F}$
 has exactly one accumulation point $x_0=0$.
 For each  $r\in(0,N)$, denote $B_r=\{x\in\mathbb{R}^m:|x|<r\}$ and
 the total number of  path-connected components of  $H^{-1}(B_r)$
 that contain at least one element of $\mathscr{F}$ by $X(r)$.
The path-connected component of $H^{-1}(B_r)$
that includes $x_0=0$ is an open set, hence it
 contains
 most elements of $\mathscr{F}$  that are
 close enough to $x_0=0$. To be precise, by most we mean exceptional points are finitely many.
 Consequently, the counting function $X$ on $(0,N)$ can only assume values in $\mathbb{N}$.
We further claim that it is both left and right  continuous.

\emph{Notations}: Ahead of proving the  claim, we introduce some notations about $X(r)$, where $r\in(0,N)$.
The $X(r)$ path-connected components of  $H^{-1}(B_r)$
 that contain at least one element of $\mathscr{F}$ are denoted by
 $\{G_i(r)\}_{i=1}^{X(r)}$. Clearly, every $G_i(r)$
 is contained in the interior of the closed ball $\mathscr{B}_s$.
 Denote $\mathscr{F}_i(r)=\mathscr{F}\cap G_i(r)$,
 the set of zero-value points of $H$ in $G_i(r)$.
 Without loss of generality, we may assume that $\mathscr{F}_1(r)$ contains
  most elements of $\mathscr{F}$. To be precise,
  we assume that there exists a $k(r)\in\mathbb{N}$ such that $\{z_i:i\geq k(r)\}\subset \mathscr{F}_1(r)$.
  The minimal distance between the pairwise disjoint components $\{G_i(r)\}_{i=1}^{X(r)}$
  is short for $d(r)$, which makes sense as long as $X(r)\geq2$.

 \emph{Proof of the left continuity}: First, let $i\in\{2,3,\ldots,X(r)\}$ be fixed. Note that $\mathscr{F}_i(r)$
 is a finite set  contained in the path-connected open set $G_i(r)$.
 So there exists a continuous curve $L_i(r):[0,1]\rightarrow G_i(r)$
 such that $\mathscr{F}_i(r)$ is covered by the range of $L_i(r)$, which is a path-connected compact set.
 Obviously, the maximal value of the restriction
 of $|H|$ onto the range of $L_i(r)$, denoted by $\tau(H,L_i(r))$,
 is less than $r$. Hence for any $\gamma$ between $\tau(H,L_i(r))$ and $r$,
all elements of $\mathscr{F}_i(r)$ have to stay in the same path-connected component of $H^{-1}(B_{\gamma})$.
We denote this component by
$\widetilde{G}_i(\gamma)$, which is a subset of $G_i(r)$.
Next, we focus on $i=1$.
Take an arbitrary closed ball $\mathscr{B}_{\epsilon}\subset G_1(r)$,
which must contain most elements of $\mathscr{F}_1(r)$;
then pick a continuous curve $L_1(r):[0,1]\rightarrow G_1(r)$ so that its range covers the remaining points of $\mathscr{F}_1(r)$
and $x_0=0$. The union between $\mathscr{B}_{\epsilon}$ and the range of $L_1(r)$
is a path-connected compact set, over which the maximal value of $|H|$, denoted by $\tau(H,L_1(r),\mathscr{B}_{\epsilon})$,
is less than $r$.  Hence for any $\gamma$ between $\tau(H,L_1(r),\mathscr{B}_{\epsilon})$ and $r$,
all elements of $\mathscr{F}_1(r)$ have to stay in the same path-connected component of $H^{-1}(B_{\gamma})$.
We denote this component by
$\widetilde{G}_1(\gamma)$, which is a subset of $G_1(r)$. To summarize, for any $\gamma$
less than and sufficiently close to $r$ and any $i\in\{1,2,\ldots,X(r)\}$,
$\widetilde{G}_i(\gamma)$ is a path-connected component of $H^{-1}(B_{\gamma})$ that satisfies
 $\mathscr{F}_i(r)\subset\widetilde{G}_i(\gamma)\subset G_i(r)$,
 from which one can easily deduce $X(\gamma)=X(r)$.
 Therefore, $X$ is left continuous at $r$.

  \emph{Proof of the right continuity}: First, we claim  that $d(r)$ is positive whenever it makes sense (that is, $X(r)\geq2$).
  We argue by contradiction and suppose $d(r)=0$. Then there exist two disjoint path-connected components
  $G_i(r)$, $G_j(r)$ of $H^{-1}(B_r)$ such that their distance is zero.
  Since both components are bounded, their boundaries have to intersect somewhere,
   say for example at some $x^{\star}\in\mathscr{B}_s$.
  Obviously, $|H(x^{\star})|=r$, which implies that $x^{\star}$ is distinct from $x_0=0$. For simplicity, denote $y^{\star}=H(x^{\star})$.
  According to Theorem \ref{strengthenedInFT},
  there exists an open neighbourhood $\mathcal{A}$ of $x^{\star}$, $\mathcal{A}\subset\mathscr{B}_s\backslash\{0\}$,
  and an open ball  $\mathcal{B}$ centered at $y^{\star}$,
  $\mathcal{B}\subset B_N$, such that $H|_{\mathcal{A}}:\mathcal{A}\rightarrow\mathcal{B}$
  is a homeomorphism (actually a diffeomorphism). Since the intersection
  of any two Euclidean balls is path-connected,
  so are $\mathcal{B}\cap B_r$ and its pre-image
  $\mathcal{A}\cap H^{-1}(B_r)$.
  To be clear,
  \[(H|_{\mathcal{A}})^{-1}(\mathcal{B}\cap B_r)=\mathcal{A}\cap(H|_{\mathcal{A}})^{-1}(B_r)=\mathcal{A}\cap H^{-1}(B_r).\]
   Since $x^{\star}$
  is an interior point of $
  \mathcal{A}$ as well as a boundary point of $G_i(r)$,
  one can focus on a sufficiently small open neighbourhood of $x^{\star}$ to pick an element $x^{\bullet}\in\mathcal{A}\cap G_i(r)$.
  Similarly, there exists an $x^{\circ}\in\mathcal{A}\cap G_j(r)$.
  Note that $\mathcal{A}\cap G_i(r)$ and $\mathcal{A}\cap G_j(r)$
  are contained in the path-connected set  $\mathcal{A}\cap H^{-1}(B_r)$,
hence $x^{\bullet}$ and $x^{\circ}$ can be connected by a continuous curve in $H^{-1}(B_r)$.
 This yields a contraction to the facts $x^{\bullet}\in G_i(r)$, $x^{\circ}\in G_j(r)$,
 and $G_i(r), G_j(r)$ are distinct path-connected components of $H^{-1}(B_r)$.
 To summarize so far, we have proved that $d(r)>0$ as long as $X(r)\geq2$.
 Next, we argue by contradiction that $X$ is right continuous at $r$.
 Suppose this is not the case, then there exist two disjoint path-connected components  of $H^{-1}(B_r)$,
which are of the form  $G_i(r)$ and $G_j(r)$, $1\leq i<j\leq X(r)$, such that
 for any $t\in(r,N)$, both components are contained in the same path-connected
 component of $H^{-1}(B_t)$.
We  arbitrarily fix two points $x^{\oplus}\in G_i(r)$ and
  $x^{\odot}\in G_j(r)$. Then for  any $\epsilon>0$,
 there exist a
 continuous curve $L_{\epsilon}:[0,1]\rightarrow\mathbb{R}^m$ connecting $x^{\oplus}$ and $x^{\odot}$
 such that
 \begin{equation}
 \label{CON}\max_{t\in[0,1]}|H(L_{\epsilon}(t))|\leq r+\epsilon.\end{equation}
 Applying Theorem \ref{strengthenedInFT} to $H$ at each $x\in\partial G_i(r)$
 yields a homeomorphism $H|_{\mathcal{A}_x}:\mathcal{A}_x\rightarrow \mathcal{B}_x$,
 $\mathcal{A}_x\subset\mathscr{B}_s\backslash\{0\}$
 an open neighbourhood of $x$,
  $\mathcal{B}_x\subset B_N$
 an open ball centered at $H(x)$.  We further claim that for any  $x\in\partial G_i(r)$,
\begin{align}
(H|_{\mathcal{A}_x})^{-1}(\mathcal{B}_x\cap B_r)&=\mathcal{A}_x\cap G_i(r),\label{H31}\\
(H|_{\mathcal{A}_x})^{-1}(\mathcal{B}_x\cap \partial B_r)&=\mathcal{A}_x\cap \partial G_i(r),\label{H32}\\
(H|_{\mathcal{A}_x})^{-1}(\mathcal{B}_x\backslash \overline{B_r})&=\mathcal{A}_x\backslash \overline{G_i(r)}.\label{H33}
\end{align}
To prove (\ref{H31}), note first
   \[(H|_{\mathcal{A}_x})^{-1}(\mathcal{B}_x\cap B_r)=\mathcal{A}_x\cap(H|_{\mathcal{A}_x})^{-1}(B_r)=\mathcal{A}_x\cap H^{-1}(B_r),\]
   which implies that $\mathcal{A}_x\cap H^{-1}(B_r)$ is path-connected (due to $\mathcal{B}_x\cap B_r$ is path-connected).
  Considering $x$ is both an interior point of $\mathcal{A}_x$
 and a boundary point of $G_i(r)$, we see that $\mathcal{A}_x\cap G_i(r)$ is not an empty set.
Consequently,
$\mathcal{A}_x\cap (H^{-1}(B_r)\backslash G_i(r))$ must be an empty set; otherwise,
elements of  $\mathcal{A}_x\cap G_i(r)$ and $\mathcal{A}_x\cap (H^{-1}(B_r)\backslash G_i(r))$
can be connected by continuous curves in $\mathcal{A}_x\cap H^{-1}(B_r)$, which contradicts to the fact that
$G_i(r)$ is a path-connected component of $ H^{-1}(B_r)$. This
  observation suffices to establish (\ref{H31}).
   To prove (\ref{H32}), note first
   $H|_{\mathcal{A}_x}$ maps $\mathcal{A}_x\cap \partial G_i(r)$
   into $\mathcal{B}_x\cap \partial B_r$. We claim that
   it is impossible to find an element of $\mathcal{A}_x\backslash \overline{G_i(r)}$
   whose image under $H$
   lies in $\mathcal{B}_x\cap \partial B_r$; otherwise, applying Theorem \ref{strengthenedInFT}
   to $H$ at this point yields plenty of elements belonging to
  $(H|_{\mathcal{A}_x})^{-1}(\mathcal{B}_x\cap B_r)$ but not $\mathcal{A}_x\cap G_i(r)$, a contradiction to (\ref{H31}).
  Therefore,  $H|_{\mathcal{A}_x}$  maps $\mathcal{A}_x\cap \partial G_i(r)$
  bijectively onto $\mathcal{B}_x\cap \partial B_r$, which implies (\ref{H32}).
At last, (\ref{H33}) is a consequence of (\ref{H31}) and (\ref{H32}).
For each $x\in\partial G_i(r)$, pick an open neighbourhood $\mathcal{O}_x$ of $x$ such that
its diameter is less than $\frac{d(r)}{2}$ and its closure $\overline{\mathcal{O}_x}$ is contained in $\mathcal{A}_x$.
Considering $\partial G_i(r)$ is a compact set  covered by $\{\mathcal{O}_{x}\}_{x\in\partial G_i(r)}$, we can  pick
  finitely many $\{\mathcal{O}_{x_k}\}_{k=1}^n$
  that forms a  sub-cover of $\partial G_i(r)$.
  Introduce an open set
  \[\mathcal{W}=G_i(r)\cup\bigcup_{k=1}^n\mathcal{O}_{x_k}.\]
Obviously, any element of $\overline{G_i(r)}$
is not a boundary point of $\mathcal{W}$, so
\[\partial \mathcal{W}\subset \Big(\bigcup_{k=1}^n\partial \mathcal{O}_{x_k}\Big)\backslash \overline{G_i(r)}=
\bigcup_{k=1}^n\big((\partial \mathcal{O}_{x_k})\backslash \overline{G_i(r)}\big)\subset
\bigcup_{k=1}^n\big(\mathcal{A}_{x_k}\backslash \overline{G_i(r)}\big).\]
Considering the crucial fact (\ref{H33}), we see that the minimal value of
the continuous function $|H|\big|_{\partial \mathcal{W}}$
on the compact set $\partial \mathcal{W}$ is greater than $r$.
The diameter of each  $\mathcal{O}_{x_k}$ is less than $\frac{d(r)}{2}$
while the distance between $G_i(r)$ and $G_j(r)$
is not smaller than $d(r)$, hence
any continuous curve $L:[0,1]\rightarrow\mathbb{R}^m$ connecting $x^{\oplus}\in G_i(r)$
and $x^{\odot}\in G_j(r)$ must meet $\partial\mathcal{W}$ because it has to leave $ \mathcal{W}$. Therefore,
\[\max_{t\in[0,1]}|H(L(t))|\geq\min_{z\in\partial \mathcal{W}}|H(z)|>r,\]
which is a contradiction to (\ref{CON})
whenever $\epsilon$ is set to be sufficiently small.
This concludes the proof of the right continuity of  $X$.

\emph{Non-constant counting function}:
 Since  $X$ is a continuous function assuming values in $\mathbb{N}$,
 it has to be a constant.
 This contradicts to the  fact
$\lim_{r\rightarrow0}X(r)=\infty$
 whose proof is as follows. Take    finitely many
 elements $\{z_i\}_{i=1}^q$ ($q\in\mathbb{N}$) of $\mathscr{F}$;
 then apply Theorem \ref{strengthenedInFT}
 at these regular points of $H$ to get
 homeomorphisms $H|_{\mathcal{A}_i}: \mathcal{A}_{i}\rightarrow \mathcal{B}_i$,
 where $z_i\in\mathcal{A}_i$,  $\{\mathcal{A}_i\}_{i=1}^q$
can be assumed to be pairwise disjoint open sets contained in $\mathscr{B}_s\backslash\{0\}$;
and finally pick a small enough $r\in(0,N)$ such that \[\overline{B_{r}}\subset\bigcap_{i=1}^q\mathcal{B}_i.\]
For each $i\leq q$, the path-connected component of $H^{-1}(B_r)$ that contains $z_i$
is precisely $(H|_{\mathcal{A}_i})^{-1}(B_r)$, which is a subset of $\mathcal{A}_i$.
Note that $\{\mathcal{A}_i\}_{i=1}^q$ are pairwise disjoint, hence $X(r)\geq q$. Considering $q\in\mathbb{N}$
is arbitrary and $X$
is a non-increasing function, we obtain that $X(r)\rightarrow\infty$ as $r\rightarrow0$, and thus
finish the whole proof of Theorem \ref{T21}.
 \end{proof}

After carefully tracing back the role played by Theorem \ref{strengthenedInFT} in the proof of Theorem \ref{T21},
we find that it was only used to guarantee local homeomorphism of $H$
outside $x_0$. Thus we actually have a more general result as follows.

\begin{theorem}\label{THM42}
Let $\Omega$ be an open subset of $\mathbb{R}^m$, and let $H:\Omega\rightarrow\mathbb{R}^m$
be a continuous map such that $H$ is a local homeomorphism on $\Omega\backslash\{x_0\}$
for some $x_0\in\Omega$. Then there exists an open neighborhood of $x_0$ on which $H$
assumes the value $H(x_0)$ only at $x_0$
\end{theorem}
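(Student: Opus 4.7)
The plan is to recycle the proof of Theorem \ref{T21} almost verbatim, exploiting the author's own observation that the differentiability hypothesis entered that proof solely through Theorem \ref{strengthenedInFT}, and only in the form of the conclusion that $H$ is a local homeomorphism at points of $\Omega\setminus\{x_0\}$. Since this conclusion is exactly what we now take as hypothesis, the whole machinery transplants. So I would begin by normalising $x_0=0$ and $H(x_0)=0$, fixing a closed ball $\mathscr{B}_s\subset\Omega$, and repeating the accumulation-point argument to locate $s_0\in[\tfrac{s}{2},s]$ with $N:=\min_{|x|=s_0}|H(x)|>0$; the only ingredient used is local injectivity of $H$ at a putative accumulation point $w_0\in\mathscr{B}_s\setminus\{0\}$, which is now supplied directly by our assumption.

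Next I would re-derive that the set $\mathscr{F}$ of nonzero zero-value points of $H$ in $\mathscr{B}_s$ is at most countable (finitely many on each dyadic annulus, by local injectivity), and introduce the counting function $X(r)$ for $r\in(0,N)$ exactly as in the original argument. Left continuity of $X$ uses only path-connectedness of the relevant components $G_i(r)$ and compactness, so it needs no input from Theorem \ref{strengthenedInFT} at all. Right continuity, together with the positivity of $d(r)$, relies on producing local homeomorphisms $H|_{\mathcal{A}_x}\colon\mathcal{A}_x\to\mathcal{B}_x$ at boundary points $x\in\partial G_i(r)$, together with the three identities (\ref{H31})--(\ref{H33}); but these identities are purely set-theoretic consequences of $H|_{\mathcal{A}_x}$ being a homeomorphism onto an open Euclidean ball, so they remain valid under our weaker hypothesis. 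Finally, the non-constancy of $X$, obtained from $X(r)\to\infty$, again only requires local homeomorphisms at the regular points $z_i\in\mathscr{F}$.

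The one point I would be careful about is to audit that every invocation of local homeomorphism in the original proof occurs strictly away from $x_0$, so that our hypothesis actually applies. This is straightforward: in the construction of $s_0$ the test point sits on a sphere of positive radius; in the proof of $d(r)>0$ the candidate point $x^\star$ satisfies $|H(x^\star)|=r>0$, hence $x^\star\neq 0$; in the right-continuity argument every $x\in\partial G_i(r)$ again has $|H(x)|=r$; and in the final step the chosen points $z_i$ lie in $\mathscr{F}\subset\mathscr{B}_s\setminus\{0\}$. With this audit in hand, the contradiction at the end of the proof of Theorem \ref{T21} carries over unchanged, yielding that $\mathscr{F}$ cannot be countable and hence is finite, which delivers the desired neighbourhood of $x_0$.

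I do not anticipate a genuine obstacle, since no new ideas are required; the only real work is the bookkeeping just described to confirm that no invocation of Theorem \ref{strengthenedInFT} in the earlier proof is ever made at $x_0$ itself. Once that is verified, Theorem \ref{THM42} follows.
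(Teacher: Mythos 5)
Your proposal is correct and is essentially identical to the paper's own justification: the paper proves Theorem \ref{THM42} precisely by observing that the proof of Theorem \ref{T21} invoked Theorem \ref{strengthenedInFT} only to obtain local homeomorphism of $H$ away from $x_0$, which is now the hypothesis. Your audit that every such invocation occurs at points with $|H(x)|>0$ or on spheres of positive radius (hence away from $x_0$) is exactly the bookkeeping the paper implicitly relies on.
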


The role of the strict inequality \[\min_{x\in\partial\mathscr{B}_s}|H(x)|>0\]
played in the proof of Theorem \ref{T21} means that the boundary of $\mathscr{B}_s$
is a high wall for all zero-value points of $H$
in $\mathscr{B}_s$. Recall that  such a wall  is bound to exist locally surrounding the chosen point $x_0$.
In general, where one may detect high walls, where there could exist certain  uniqueness (or injectivity).

 \begin{theorem}\label{propA} Let $H:\overline{\Omega}\rightarrow\mathbb{R}^m$,
$\Omega$ a bounded  open subset of $\mathbb{R}^m$, be a continuous map
 whose restriction onto $\Omega$ is a local homeomorphism.
Suppose that \[N\doteq\min_{x\in\partial\Omega}|H(x)|>0.\]
Then there does not exist a continuous curve $L:[0,1]\rightarrow\Omega$ with distinct ending points $x_0$ and $x_1$
such that $H(x_0)=H(x_1)$ and the maximal modulus of $H$ over $L$ is less than $N$.
 \end{theorem}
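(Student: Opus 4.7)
The plan is to argue by contradiction. Suppose such a curve $L$ exists and set $M\doteq\max_{t\in[0,1]}|H(L(t))|<N$; fix any $r\in(M,N)$ and let $G$ denote the path-connected component of the open set $H^{-1}(B_r)\cap\Omega$ containing the image of $L$, so that $x_0,x_1\in G$. The wall hypothesis $|H|\geq N>r$ on $\partial\Omega$, combined with $H(\overline{G})\subset\overline{B_r}$ by continuity, immediately forces $\overline{G}\cap\partial\Omega=\emptyset$, so that $\overline{G}\subset\Omega$ is compact. A standard argument using the openness and local path-connectedness of $H^{-1}(B_r)\cap\Omega$ further shows $|H|\equiv r$ on $\partial G$, for otherwise a point of $\partial G$ at which $|H|<r$ would drag an open neighborhood of itself into $G$.

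The crux is to prove that $H|_G:G\to B_r$ is a covering map. For any $y\in B_r$, the fiber $H^{-1}(y)\cap\overline{G}$ is compact, avoids $\partial G$ since $|y|<r$, and therefore sits inside $G$; because $H|_\Omega$ is locally injective, this set is also discrete and hence finite. A compactness-of-$\overline{G}$ argument next upgrades finiteness to local triviality: if $\{z_1,\ldots,z_n\}=H^{-1}(y)\cap G$ with pairwise disjoint homeomorphism charts $U_i\ni z_i$ onto a common $V\ni y$, then any failure of $H^{-1}(V)\cap G\subset\bigcup_i U_i$ as $V$ shrinks to $y$ would, by a subsequential-limit argument in $\overline{G}$, produce an additional preimage of $y$ in $G$, contradicting the enumeration of the fiber. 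For $V$ small enough, $H|_G$ therefore exhibits $V$ as an evenly covered chart, so $H|_G$ is a (finite-sheeted) covering.

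Finally, $B_r$ is simply connected, so any covering of it by a path-connected space must be one-sheeted: given two preimages of a point, a path in $G$ between them projects to a null-homotopic loop in $B_r$, whose null-homotopy lifts to a rel-endpoint null-homotopy of the original path, forcing the two preimages to coincide. Since $G$ is path-connected by construction, $H|_G$ must therefore be injective, contradicting $x_0\neq x_1$ with $H(x_0)=H(x_1)$.

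I expect the main technical step to be the verification of the covering property; both the finiteness of the fibers and the construction of evenly covered neighborhoods rest essentially on the compactness of $\overline{G}$ inside $\Omega$, which is exactly what the wall hypothesis $N>0$ delivers. Once the reduction to a covering over a simply connected ball is in hand, the contradiction is automatic.
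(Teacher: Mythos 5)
Your proof is correct, but it follows a genuinely different route from the paper. The paper normalizes $H(x_0)$ to $0$ by composing with a self-homeomorphism of $\mathbb{R}^m$, introduces for each $r\in(0,N)$ the counting function $X(r)$ equal to the number of path-connected components of $H^{-1}(B_r)$ meeting $\{x_0,x_1\}$, and then repeats the left- and right-continuity arguments from the proof of Theorem 2.1 to show $X$ is constant on $(0,N)$; since $X(r)=2$ for small $r$ (local injectivity near $x_0,x_1$) while $X(r)=1$ for $r$ close to $N$ (the curve $L$ lies below level $N$), a contradiction follows. You instead fix a single radius $r$ between $\max_{t}|H(L(t))|$ and $N$, use the wall hypothesis plus boundedness of $\Omega$ to get $\overline{G}\subset\Omega$ compact with $|H|\equiv r$ on $\partial G$, and then upgrade the local homeomorphism to a finite-sheeted covering $H|_G:G\rightarrow B_r$ via the compactness/subsequential argument (your observation that the limit point stays outside the open charts because their complement is closed is exactly what makes the evenly-covered step work, including the empty-fiber case, which also gives surjectivity since the image is open and closed in the connected ball). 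The monodromy argument over the simply connected ball then forces injectivity of $H|_G$, contradicting $H(x_0)=H(x_1)$ with $x_0\neq x_1$. What the two approaches buy: yours is more conceptual, proves the stronger statement that $H$ restricted to the whole component $G$ is a homeomorphism onto $B_r$, and avoids re-running the delicate one-sided continuity analysis of the counting function; the paper's argument stays entirely within elementary point-set reasoning already developed for Theorem 2.1 and does not invoke covering-space machinery (path and homotopy lifting), which is the one nontrivial external input your proof relies on and should be cited or proved if a fully self-contained exposition is desired.
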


A sketch of the proof is as follows. Suppose the contrary that
exist a continuous curve $L:[0,1]\rightarrow\Omega$ with distinct ending points $x_0$ and $x_1$
such that $H(x_0)=H(x_1)$ and the maximal modulus of $H$ over $L$ is less than $N$.
Without loss of generality we may assume that $H(x_0)=0$; otherwise
it suffices to study $\Psi\circ H:\overline{\Omega}\rightarrow\mathbb{R}^m$, where $\Psi$ is a self-homeomorphism of $\mathbb{R}^m$
mapping $H(x_0)$ to 0 and $B_N$ bijectively onto $B_N$.
For any $r\in(0,N)$, let $X(r)$ denote the total number
of path-connected components of $H^{-1}(B_r)$  that contain at least one element of $\{x_0,x_1\}$. Obviously, $1\leq X(r)\leq2$
for all $r$. Similar to the proof of Theorem \ref{T21}, one can show that $X(\cdot)$ is a constant-valued function on $(0,N)$.
Since $H$ is  homeomorphism locally around both $x_0$ and $x_1$, we see that $X(r)=2$ provided $r$ is sufficiently small.
Therefore, the counting function $X(\cdot)$ is identically equal to 2.
But considering  $x_0$ and $x_1$ are connected by $L$ over which
 the maximal modulus of $H$ is less than $N$, we get that $X(r)=1$ whenever $r$ is sufficiently close to $N$, a contradiction.

\section{Theorem \ref{THM12}: proof and self-improvement}\label{section3}

This main purpose of this section is to present a proof of Theorem \ref{THM12}, which stands in dimensions higher than two.
The corresponding  planar case
is slightly complicated and will  be discussed in detail in the next section.
We first collect three lemmas.

\begin{lemma}\label{THMopen2}
Let  $\Omega$ be an open subset of $\mathbb{R}^m$ ($m\geq2$), and  $H:\Omega\rightarrow\mathbb{R}^m$
 a differentiable map with at most finitely many critical points. Then $H$ is an open map.
\end{lemma}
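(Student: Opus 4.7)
The plan is to establish openness of $H$ pointwise, treating the finitely many critical points separately from the regular points. Fix an open set $U\subset\Omega$ and a point $x_0\in U$; the aim is to show that $H(x_0)$ is an interior point of $H(U)$. If $x_0$ is a regular point, Theorem \ref{strengthenedInFT} supplies an open neighbourhood $U_0\subset U$ of $x_0$ on which $H$ is a diffeomorphism onto an open set, so $H(x_0)$ is trivially interior to $H(U)$. All the content is at the critical points.

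Suppose then that $c\in U$ is a critical point. Using the finiteness of the critical set together with Theorem \ref{Lemma35}, I would select a closed ball $\mathscr{B}(c,s)\subset U$ in which $c$ is the unique critical point and the unique preimage of $H(c)$. Compactness of $\partial\mathscr{B}(c,s)$ then yields
\[
\delta\doteq\min_{x\in\partial\mathscr{B}(c,s)}|H(x)-H(c)|>0.
\]
The key claim is that $B(H(c),r)\subset H(B(c,s))$ for every $0<r<\delta$, which would place $H(c)$ in the interior of $H(U)$.

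To verify this claim, I would examine the restriction
\[
H':\;X\doteq H^{-1}(B(H(c),r))\cap (B(c,s)\setminus\{c\})\;\longrightarrow\;Y\doteq B(H(c),r)\setminus\{H(c)\}.
\]
By Theorem \ref{strengthenedInFT}, $H'$ is a local homeomorphism, since no critical points remain in its domain. The crucial step is that $H'$ is proper: for any compact $K\subset Y$, the preimage $H^{-1}(K)\cap\mathscr{B}(c,s)$ is closed in the compact ball, hence compact; the strict inequality $|H(x)-H(c)|\geq\delta>r$ on $\partial\mathscr{B}(c,s)$ confines the preimage to $B(c,s)$; and continuity of $H$ at $c$, combined with the positive separation of $K$ from $H(c)$, keeps the preimage uniformly away from $c$. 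Therefore $H'^{-1}(K)$ is compact.

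A local homeomorphism is an open map, and a proper map between Hausdorff locally compact spaces is closed; consequently $H'(X)$ is both open and closed in $Y$. The set $X$ is non-empty, since all points sufficiently close to $c$ but distinct from $c$ lie in $X$ by continuity of $H$ at $c$. The hypothesis $m\geq2$ enters at precisely this point: only for $m\geq2$ is the punctured ball $Y=B(H(c),r)\setminus\{H(c)\}$ connected, which forces $H'(X)=Y$. Hence $Y\subset H(B(c,s))$, and adjoining $H(c)$ yields $B(H(c),r)\subset H(B(c,s))\subset H(U)$, completing the claim. The main obstacle is the clean verification of the properness of $H'$, which is what bridges the local information supplied by Theorem \ref{strengthenedInFT} with the desired global openness, and which matches the dimensional restriction through the connectedness of a punctured Euclidean ball.
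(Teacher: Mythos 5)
Your proof is correct, but it is worth pointing out that the paper itself contains no proof of Lemma \ref{THMopen2}: the result is simply quoted from the author's separate article \cite{Li}, with only an interpretive remark about maximal and minimal modulus. Your argument is therefore a genuine self-contained alternative, and it is logically admissible within the paper, since the two ingredients you use --- Theorem \ref{strengthenedInFT} and Theorem \ref{Lemma35} --- are established before Section \ref{section3} without any appeal to Lemma \ref{THMopen2}, so there is no circularity. The properness verification is sound: for compact $K\subset Y$ the preimage is a closed subset of the compact ball $\mathscr{B}(c,s)$, it avoids $\partial\mathscr{B}(c,s)$ because values there lie at distance at least $\delta>r$ from $H(c)$, and it avoids a neighbourhood of $c$ because $K$ is separated from $H(c)$; the clopen-image argument in the connected punctured ball $Y$ then forces surjectivity onto $Y$, and you correctly isolate $m\geq2$ as the point where connectedness of $Y$ is used (for $m=1$ the statement fails, e.g.\ $x\mapsto x^2$). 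What the paper's citation buys is greater generality: the external reference \cite{Li} also yields the stronger Lemma \ref{Lemma46} ($C^1$ maps with countably many critical points), to which your argument does not immediately extend, since non-isolated critical points defeat both the use of Theorem \ref{Lemma35} and the single-puncture connectedness argument. What your route buys is economy and transparency: it needs only differentiability and finiteness of the critical set, stays entirely inside the toolkit already developed in the paper, and is in the same spirit as the boundary-estimate and local-homeomorphism reasoning used later in the proof of Theorem \ref{P48}.
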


This result was established  by the author  \cite{Li}.
We shall understand it from the viewpoint of maximal or minimal modulus. For example,
suppose further that $\Omega$
is bounded in $\mathbb{R}^m$ and $H$ admits a (unique) continuous extension to $\overline{\Omega}$.
Then it is impossible for $|H|$
to attain its maximal value inside $\Omega$. Alternatively, if $|H|$
 attains its minimal value inside $\Omega$, then this particular value can only be zero.

\begin{lemma}\label{SC}
Given an arbitrary simple closed curve $\gamma:[0,1]\rightarrow\mathbb{R}^2$, there exists a self-homeomorphism of
$\mathbb{R}^2$ under which the unit circle $\mathbb{S}$ is mapped onto the range of $\gamma$.
\end{lemma}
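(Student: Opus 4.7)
The statement is precisely the Jordan-Schönflies theorem in the plane, and the plan is to assemble it from the Jordan curve theorem together with the Riemann mapping theorem and Carathéodory's boundary extension theorem, followed by a standard gluing argument. The whole construction is most naturally carried out in the one-point compactification $\mathbb{S}^2 = \mathbb{R}^2 \cup \{\infty\}$ and then pushed back down.

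First I would apply the Jordan curve theorem to the range $\Gamma := \gamma([0,1])$: since $\gamma$ is a simple closed curve, $\mathbb{R}^2 \setminus \Gamma$ has exactly two connected components, a bounded interior region $\Omega_i$ and an unbounded exterior region $\Omega_e$, sharing $\Gamma$ as their common topological boundary. By the Riemann mapping theorem, there is a conformal bijection $\phi_i : \mathbb{D} \to \Omega_i$ from the open unit disk to $\Omega_i$; and since $\Omega_i$ is a Jordan domain, Carathéodory's theorem extends $\phi_i$ to a homeomorphism $\overline{\mathbb{D}} \to \overline{\Omega_i}$ sending the unit circle $\mathbb{S}$ onto $\Gamma$.

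The exterior region $\Omega_e \cup \{\infty\}$ is also a Jordan domain in $\mathbb{S}^2$ bounded by $\Gamma$; after conjugating by the inversion $z \mapsto 1/z$ the same argument yields a homeomorphism $\phi_e$ from $\{z \in \mathbb{R}^2 : |z| \geq 1\} \cup \{\infty\}$ onto $\overline{\Omega_e} \cup \{\infty\}$ that fixes $\infty$ and maps $\mathbb{S}$ onto $\Gamma$. The two boundary restrictions $\phi_i|_{\mathbb{S}}$ and $\phi_e|_{\mathbb{S}}$ need not agree, so I would apply the Alexander trick: their discrepancy $h := (\phi_e|_{\mathbb{S}})^{-1} \circ (\phi_i|_{\mathbb{S}})$ is a self-homeomorphism of $\mathbb{S}$ that can be coned off radially to a self-homeomorphism $H$ of $\overline{\mathbb{D}}$ via $H(r e^{i\theta}) := r \, h(e^{i\theta})$ for $r \in (0,1]$ and $H(0) := 0$. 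Replacing $\phi_i$ by $\phi_i \circ H^{-1}$ aligns the two boundary restrictions, and the two maps then glue along $\mathbb{S}$ to a self-homeomorphism of $\mathbb{S}^2$ that sends $\mathbb{S}$ onto $\Gamma$ and fixes $\infty$; deleting $\infty$ yields the required self-homeomorphism of $\mathbb{R}^2$.

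The main obstacle in the plan is Carathéodory's boundary extension theorem, which I would invoke as a well-known deep result rather than prove from scratch; all remaining steps (Jordan curve theorem, Riemann mapping, Alexander coning, gluing) are standard and would be cited or verified directly. An alternative route, purely topological in flavor, would approximate $\gamma$ by polygonal Jordan curves and use the elementary polygonal Schönflies theorem plus a limiting argument; this avoids complex analysis but is considerably more intricate and less suitable for a short proof.
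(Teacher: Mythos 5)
Your proposal is a correct and standard proof of the planar Jordan--Sch\"{o}nflies theorem, but it is worth noting that the paper does not prove Lemma \ref{SC} at all: it invokes it as a known classical result with citations (Armstrong, Cairns, Lee, Moise), and those references establish it by purely topological means, essentially the polygonal-approximation route you mention only as an alternative. Your complex-analytic argument --- Jordan curve theorem, Riemann mapping of the interior, Carath\'{e}odory extension (valid since the boundary $\Gamma$ is a Jordan curve, hence locally connected), the analogous treatment of the exterior on $\mathbb{S}^2$, correction of the boundary mismatch by the Alexander cone $H(re^{i\theta})=r\,h(e^{i\theta})$, and gluing along $\mathbb{S}$ to a homeomorphism of $\mathbb{S}^2$ fixing $\infty$ --- is sound; the only imprecision is the inversion step, since $z\mapsto 1/z$ turns $\Omega_e\cup\{\infty\}$ into a bounded Jordan domain only if $0\in\Omega_i$, so you should instead invert about a chosen point $p\in\Omega_i$ (i.e.\ use $z\mapsto 1/(z-p)$) or apply Carath\'{e}odory directly on the sphere; you also implicitly use that $\Omega_i$ and $\Omega_e\cup\{\infty\}$ are simply connected, which is standard but deserves a word. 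The trade-off between the two approaches is the usual one: your route is short but leans on Carath\'{e}odory's extension theorem, a black box of comparable depth to Sch\"{o}nflies itself, whereas the cited topological proofs are longer but elementary and stay within the toolkit the paper otherwise uses.
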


This result is known as the Sch\"{o}nflies theorem \cite{Am,Cai,Lee202,Mo}, which implies Jordan's curve theorem.

\begin{lemma}\label{HOM}
Let $X$ be path-connected and $Y$ simply connected Hausdorff spaces.
Then a local homeomorphism $f:X\rightarrow Y$ is a global homeomorphism from $X$ onto $Y$ if and only if  $f$ is proper.
\end{lemma}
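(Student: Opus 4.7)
The plan is to reduce this lemma to the classical covering space dichotomy: a proper local homeomorphism is a covering map, and a covering map onto a simply connected base with a path-connected total space must be a homeomorphism. The forward direction is essentially trivial, so I would focus attention on the converse.

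For the \emph{only if} direction, if $f$ is a global homeomorphism then $f^{-1}:Y\rightarrow X$ is continuous, so the preimage of any compact $K\subset Y$ equals the direct image of $K$ under the continuous map $f^{-1}$, hence is compact. Thus $f$ is proper.

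For the \emph{if} direction, I would proceed in three stages. \emph{Stage 1: surjectivity and finite fibers.} The image $f(X)$ is open because $f$ is a local homeomorphism, and closed because a proper map into a Hausdorff space is closed (its image is therefore closed). Since $Y$ is simply connected, it is in particular path-connected, hence connected, forcing $f(X)=Y$. For each $y\in Y$, the fiber $f^{-1}(y)$ is compact by properness and discrete by the local homeomorphism property, hence finite. \emph{Stage 2: $f$ is a covering map.} Fix $y\in Y$ with fiber $\{x_1,\ldots,x_k\}$, and choose pairwise disjoint open neighborhoods $U_i\ni x_i$ on which $f|_{U_i}$ is a homeomorphism onto an open set $f(U_i)$. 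The set $C=X\setminus\bigcup_i U_i$ is closed, so by properness $f(C)$ is closed in $Y$ and does not contain $y$. Choose an open $V$ with $y\in V\subset\bigcap_i f(U_i)\setminus f(C)$; then $f^{-1}(V)$ is the disjoint union of the sets $V_i=U_i\cap f^{-1}(V)$, each mapped homeomorphically onto $V$ by $f$. \emph{Stage 3: covering with simply connected base is trivial.} Fix $x_0\in X$ and let $y_0=f(x_0)$. For any $y\in Y$, pick a path $\gamma$ from $y_0$ to $y$ and lift it uniquely to a path $\widetilde{\gamma}$ in $X$ starting at $x_0$ by the standard path-lifting property of covering maps. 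If $\gamma'$ is another such path, simple connectedness provides a path-homotopy between $\gamma$ and $\gamma'$ rel endpoints, whose lift (by the homotopy lifting property) is a homotopy between the lifts of $\gamma$ and $\gamma'$, forcing $\widetilde{\gamma}(1)=\widetilde{\gamma'}(1)$. Setting $g(y)=\widetilde{\gamma}(1)$ yields a well-defined two-sided inverse for $f$, and continuity of $g$ is immediate from local triviality.

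The main obstacle is a subtle hypothesis issue: the classical lifting theorems used in Stage 3 are usually stated with the additional assumption that $Y$ be locally path-connected, whereas the lemma as stated imposes only the Hausdorff condition. For the paper's intended applications, where $Y$ is a sphere or an open subset of Euclidean space, local path-connectedness is automatic and the argument goes through verbatim. In full generality one would either need to add this hypothesis silently or replace Stage 3 by a direct monodromy argument: propagate local inverses of $f$ along paths in $Y$, use simple connectedness to verify that the value obtained at the endpoint of any path is independent of the path, and invoke the open-closed method to glue the local inverses into a globally defined continuous right inverse. Either route reaches the same conclusion.
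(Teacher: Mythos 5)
Your argument is a reconstruction of the standard covering-space proof, and since the paper does not prove this lemma at all but quotes it from Ho \cite{Ho}, there is no in-paper proof to diverge from; the architecture (proper $+$ local homeomorphism $\Rightarrow$ finite discrete fibers $\Rightarrow$ evenly covered neighbourhoods $\Rightarrow$ covering map, then simply connected base $+$ path-connected total space $\Rightarrow$ homeomorphism) is the right one. The one step that is not correct in the stated generality is the assertion, used twice (for $f(X)=Y$ in Stage 1 and for $f(C)$ missing $y$ in Stage 2), that a proper map into a Hausdorff space is closed. That implication needs the target to be compactly generated (locally compact Hausdorff, metrizable, or first countable suffice); for a bare Hausdorff $Y$ it can fail, which is precisely the technical point Ho's note is organized around. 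So, like the local path-connectedness issue you flagged yourself, this is a gap only at the level of the lemma's very general wording; in every use the paper makes of the lemma, $Y$ is a topological $(m-1)$-sphere or a subset of $\mathbb{R}^m$, where both compact generation and local path-connectedness are automatic and your Stages 1--2 go through verbatim.

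Your Stage 3 worry can be dispelled more cheaply than by the monodromy-and-gluing alternative you sketch. Unique path lifting and homotopy lifting hold for arbitrary covering maps, with no local connectivity hypothesis on the base (Lebesgue-number arguments on $[0,1]$ and $[0,1]^2$); local path-connectedness is needed only to prove continuity of the global section $g$ you construct. So do not construct $g$: prove injectivity directly. If $f(x_1)=f(x_2)=y$, join $x_1$ to $x_2$ by a path in $X$ (path-connectedness of $X$), push it down to a loop at $y$, contract that loop using simple connectedness of $Y$, lift the contraction, and conclude from uniqueness of lifts and discreteness of the fiber that $x_1=x_2$. Then $f$ is a continuous, open (local homeomorphism), injective and surjective map, hence a homeomorphism, and the question of continuity of an inverse never arises.
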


To be clear, a continuous map from $X$ to $Y$ is said to be proper if the inverse image of each compact
subset of $Y$ is a compact subset of $X$. A proof of this result
was given by Ho \cite[Thm. 2]{Ho} (see also \cite{Gordon,Ho2}).
As an application of Lemma \ref{HOM}, local homeomorphisms
 between topological $(m-1)$-spheres ($m\geq3$)
 are bound to be global homeomorphisms.

We are now ready to establish Theorem \ref{THM12}, which is a consequence of Theorem \ref{strengthenedInFT}
and the following result.

\begin{theorem}\label{P48}
Let $m\geq2$, $\Omega$ be an open subset of $\mathbb{R}^m$,   $H:\Omega\rightarrow\mathbb{R}^m$
 a differentiable map,
$x_0\in\Omega$ an isolated critical point of $H$, and denote $y_0=H(x_0)$. If $r>0$ is sufficiently small,
then the path-connected component $V_r$ of $H^{-1}(B(y_0,r))$ that contains $x_0$
is a topological open $m$-ball.
In the case of $m\geq3$, $H|_{V_r}$ is a homeomorphism
from
$V_r$ onto $B(y_0,r)$.
\end{theorem}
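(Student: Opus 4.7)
My plan is to use Theorems \ref{T21} and \ref{strengthenedInFT} to cut $\Omega$ down to a ball on which $H$ is a local diffeomorphism away from $x_0$ and the sphere $S(y_0,r)$ acts as a barrier. Then I would show that the punctured restriction $H\colon V_r\setminus\{x_0\}\to B(y_0,r)\setminus\{y_0\}$ is a proper local homeomorphism, hence a covering map. For $m\geq 3$ the punctured target is simply connected, so the covering is one-sheeted and the map extends to a homeomorphism on $V_r$; this simultaneously identifies $V_r$ as a topological open $m$-ball.

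\textbf{Localization.} After translating we may assume $y_0=0$. Invoking Theorem \ref{T21}, I would pick $s>0$ such that $\overline{B(x_0,s)}\subset\Omega$, $H$ has no critical points in $\overline{B(x_0,s)}\setminus\{x_0\}$, and $H^{-1}(0)\cap\overline{B(x_0,s)}=\{x_0\}$. Set $N\doteq\min_{|x-x_0|=s}|H(x)|$, which is positive by compactness and the previous line. For $r\in(0,N)$, any path in $H^{-1}(B(0,r))$ starting at $x_0$ cannot reach $\partial B(x_0,s)$, where $|H|\geq N>r$; hence $V_r\subset B(x_0,s)$ and $V_r$ is bounded.

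\textbf{Key clopen identity.} Since $V_r$ is a (path-)connected component of the open set $H^{-1}(B(0,r))$ and $\mathbb{R}^m$ is locally connected, $V_r$ is both open and closed in $H^{-1}(B(0,r))$, yielding
\[\overline{V_r}\cap H^{-1}(B(0,r))=V_r.\]
This identity drives two conclusions. First, $H(V_r)=B(0,r)$: the image is open by Lemma \ref{THMopen2}, and closed in $B(0,r)$ because a sequence $y_n=H(x_n)\to y\in B(0,r)$ with $x_n\in V_r$ has, by boundedness, a subsequential limit $x\in\overline{V_r}$ with $H(x)=y\in B(0,r)$, forcing $x\in V_r$ by the identity. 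Second, the punctured restriction $H\colon V_r\setminus\{x_0\}\to B(0,r)\setminus\{0\}$ is proper: for compact $K\subset B(0,r)\setminus\{0\}$, the preimage $V_r\cap H^{-1}(K)$ is bounded, and its closedness in $\mathbb{R}^m$ follows again from the identity.

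\textbf{Covering argument, conclusion, and main obstacle.} By Theorem \ref{strengthenedInFT} the punctured restriction is a local homeomorphism, and when combined with properness and the local compactness of $B(0,r)\setminus\{0\}$ it is a covering map. The domain $V_r\setminus\{x_0\}$ is path-connected because removing a single point from a connected open subset of $\mathbb{R}^m$ with $m\geq 2$ preserves connectedness. For $m\geq 3$ the target $B(0,r)\setminus\{0\}$ is simply connected, so by Lemma \ref{HOM} the covering is one-sheeted, that is, a homeomorphism. Extending by $x_0\mapsto 0$ and using once more that any sequence in $V_r$ whose image tends to $0$ must tend to $x_0$ (boundedness of $V_r$ plus $H^{-1}(0)\cap\overline{V_r}=\{x_0\}$), I obtain a homeomorphism $H|_{V_r}\colon V_r\to B(0,r)$, so in particular $V_r$ is a topological open $m$-ball. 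The main obstacle is the clopen identity itself: it is elementary but invisibly underpins surjectivity, properness, and continuity of the inverse all at once. The planar case $m=2$ cannot be closed this way because the punctured target is not simply connected; as the author announces, it requires a Sch\"{o}nflies-based analysis via Lemma \ref{SC} in the next section.
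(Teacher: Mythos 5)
Your argument for $m\geq 3$ is correct, and it takes a genuinely different route from the paper's. You puncture the domain at $x_0$ and apply Lemma \ref{HOM} (equivalently, covering-space theory) to the proper local homeomorphism $H\colon V_r\setminus\{x_0\}\to B(y_0,r)\setminus\{y_0\}$; your clopen identity $\overline{V_r}\cap H^{-1}(B(y_0,r))=V_r$ indeed delivers surjectivity of $H|_{V_r}$, properness of the punctured restriction, and continuity of the inverse in one stroke, and the barrier argument justifying $V_r\subset B(x_0,s)$ for $r<N$ is sound (your localization via Theorem \ref{T21} matches the paper's first step). The paper never punctures: it proves $H(\partial\mathscr{X})\subset\partial B_r$, connectedness of $\mathbb{R}^m\setminus\overline{\mathscr{X}}$, and that $\partial\mathscr{X}$ is a compact $(m-1)$-manifold, applies Lemma \ref{HOM} to boundary components to see each is a topological $(m-1)$-sphere, invokes the Jordan--Brouwer separation theorem to show $\partial\mathscr{X}$ is a single sphere, and then establishes surjectivity (via the level spheres $\mathscr{Y}_q$), injectivity (another Jordan--Brouwer plus minimum-modulus argument using Lemma \ref{THMopen2}), and continuity of the inverse separately. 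Your route is shorter and avoids the boundary analysis entirely; the paper's route has the advantage of producing the boundary-sphere structure that the planar case and the winding-number analysis of Section \ref{SEC4} actually rely on.

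There is, however, a genuine gap: Theorem \ref{P48} asserts that $V_r$ is a topological open $m$-ball for every $m\geq 2$, and your proof yields this only for $m\geq 3$, as a by-product of the homeomorphism. Deferring $m=2$ to ``the next section'' misreads the paper's structure: the planar $2$-ball claim is proved inside the proof of Theorem \ref{P48} itself (Case 1), and Section \ref{SEC4} then builds on it. Moreover, the planar case is not a routine continuation of your scheme: for $m=2$ the punctured restriction may genuinely be an $n$-sheeted covering of the punctured disc (e.g.\ $z\mapsto z^n$ at $z=0$), so one-sheetedness fails and interior covering data alone do not identify the topology of $V_r$; one has to analyse $\partial V_r$. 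The paper does this by showing $H(\partial\mathscr{X})\subset\partial B_r$, that $\mathbb{R}^2\setminus\overline{\mathscr{X}}$ is connected, and that $\partial\mathscr{X}$ is a compact $1$-manifold, then using Jordan's curve theorem to rule out a second boundary curve and the Sch\"onflies theorem (Lemma \ref{SC}) to conclude that $\mathscr{X}$ is a topological open $2$-ball. None of these steps appears in your proposal, so the $m=2$ half of the statement is left unproven.
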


\begin{proof} According to Theorem \ref{Lemma35}, we may assume without loss of generality
 that $H$ attains the value $y_0$ in $\Omega$
only art $x_0$. Similarly, as $x_0$ is known to be an isolated critical point of $H$,
it is of no harm to assume that there are no other critical points of $H$ in $\Omega$ except $x_0$.
We also require  $x_0=y_0=0$ for simplicity.
Since $x_0=0$ is an interior point of $\Omega$,  we can take an $s>0$ such that
$\overline{B_s}\subset \Omega$,
 where $B_s$ is short for $B(x_0,s)$, the open ball with radius $s$ and center $x_0=0$ in $\mathbb{R}^m$.
Considering that $y_0=H(x_0)=0$ is an interior point of the open subset $H(\Omega)$
of $\mathbb{R}^m$
 (due to Lemma \ref{THMopen2}) and the minimal value of $|H|$ over the compact set $\partial B_s$ is positive,  we can fix an $r>0$ such that
$\overline{B_r}\subset H(B_s)$ and
\begin{equation}\label{RS}
r<\min\limits_{x\in\partial B_s} |H(x)|.
\end{equation}
To be clear, $B_r$ should   be  understood as $B(y_0,r)$ although $y_0=0$ has been identified with $x_0=0$.
Let $\mathscr{X}$ denote the path-connected component of $H^{-1}(B_r)$
that contains $x_0=0$.   Obviously, the open set $\mathscr{X}$
is contained in the closed ball $\overline{B_s}$.
We claim that
\begin{itemize}
\item (a) $H(\partial\mathscr{X})\subset\partial B_r$,
\item (b) $\mathbb{R}^m\backslash\overline{\mathscr{X}}$ is a  path-connected unbounded open set,
  \item (c) $\partial\mathscr{X}$ is a compact  $(m-1)$-dimensional   topological manifold.
\end{itemize}
A proof of the claim is as follows.
\begin{itemize}
  \item Proof of (a): It follows from $H(\mathscr{X})\subset B_r$ that $H(\partial\mathscr{X})\subset \overline{B_r}$.
 Suppose there was a $z\in\partial\mathscr{X}$ such that $H(z)\in B_r$. Obviously, $z$ is distinct from $x_0$,
 the unique critical point of $H$ in $\Omega$. Thus according to Theorem \ref{strengthenedInFT},
  $H$
is a  homeomorphism between some open neighbourhood $U_0$ of $z$
and open neighbourhood $U_1$ of $H(z)$. Shrinking $U_1$ if necessary, we may further assume that
$U_1\subset B_r$. Consequently, $U_0\subset H^{-1}(B_r)$, which implies that $z$ is an interior point of $H^{-1}(B_r)$.
Hence $z$ is not a boundary point of $\mathscr{X}$ because $\mathscr{X}$ is a path-connected component of $H^{-1}(B_r)$.
A contradiction is thus obtained. So we get $H(\partial\mathscr{X})\subset\overline{B_r}\backslash B_r=\partial B_r$.

\item Proof of (b): $\mathbb{R}^m\backslash\overline{\mathscr{X}}$ is clearly an open set. Considering
$\mathbb{R}^m\backslash\overline{B_s}\subset \mathbb{R}^m\backslash\overline{\mathscr{X}}$, we see that
$\mathbb{R}^m\backslash\overline{\mathscr{X}}$ is an unbounded set.
Suppose the contrary that $\mathbb{R}^m\backslash\overline{\mathscr{X}}$
has at least two path-connected components, then one can pick one of them, denoted by $U_2$, such that
$U_2\subset \overline{B_s}$. This existence  is due to the fact that $\mathbb{R}^m\backslash\overline{B_s}$
is contained in some unique unbounded path-connected component of  $\mathbb{R}^m\backslash\overline{\mathscr{X}}$.
 Obviously, $\partial U_2$ is a non-empty set satisfying
\[\partial U_2\subset\partial(\mathbb{R}^m\backslash\overline{\mathscr{X}})=\partial\overline{\mathscr{X}}=\partial\mathscr{X}.\]
Fix an arbitrary element
$u\in\partial U_2$.  Since $u$ is distinct from $x_0$,
 the unique critical point of $H$ in $\Omega$,
one can deduce from Theorem \ref{strengthenedInFT} to get a homeomorphism from some open neighbourhood $U_3$ of $u$
onto some open ball $V_3$ centered at $H(u)\in\partial B_r$.
We claim that for any $w\in U_3$,
\[ w\in\mathscr{X}\Leftrightarrow |H(w)|<r.\]
Suppose this was not true, then there exists a $w_1\in U_3\backslash\mathscr{X}$ such that $|H(w_1)|<r$.
Take an arbitrary $w_2\in U_3\cap\mathscr{X}$ and note $|H(w_2)|<r$.
Thus $H(w_1)$ and $H(w_2)$ can be connected by a continuous curve in  $V_3\cap B_r$, which implies that
$w_1$ and $w_2$ can be connected by a continuous curve in the pre-image $U_3\cap H^{-1}(B_r)$, or simply in $H^{-1}(B_r)$.
So $w_1$ and $w_2$ have to stay in the same path-connected component of $H^{-1}(B_r)$. Considering
$w_2\in\mathscr{X}$, one gets $w_1\in\mathscr{X}$, a contradiction. This proves the  if-and-only-if proposition.
Since $u\in\partial U_2$ is an interior point of $U_3$,
one can pick a $w^{\star}\in U_2\cap U_3$. Note
$U_2\cap \overline{\mathscr{X}}=\emptyset$, so $w^{\star}\not\in\mathscr{X}$, which results in $|H(w^{\star})|\geq r$ (due to the if-and-only-if proposition).
Therefore, by considering  $H(\partial U_2)\subset\partial B_r$ and $\overline{U_2}$ is a compact set,
we see that \[\max_{y\in \overline{U_2}}|H(y)|=\sup_{y\in U_2}|H(y)|\] is attainable at some interior point of $U_2$,
which contradicts to the openness of $H(U_2)$ (due to Lemma \ref{THMopen2} or Theorem \ref{strengthenedInFT}).
Hence $\mathbb{R}^m\backslash\overline{\mathscr{X}}$  has exactly one path-connected component; in other words, it is a path-connected subset of $\mathbb{R}^m$.

\item Proof of (c):
Since $\partial\mathscr{X}$ is a bounded closed subset of $\mathbb{R}^m$,
we see that it is a compact topological space (endowed with the subspace topology).
It is easy to deduce from Theorem \ref{strengthenedInFT}  that
$\partial\mathscr{X}$ is an $(m-1)$-dimensional topological manifold.

\end{itemize}

We continue to prove the theorem by considering $m=2$ and $m\geq3$ separately.

\textbf{Case 1}: Suppose $m=2$. According to property (c), $\partial\mathscr{X}$ is a compact  one-dimensional   topological manifold.
  So every path-connected component of $\partial\mathscr{X}$
  can be regarded as a simple closed curve. If $\partial\mathscr{X}$ was not path-connected,
  we then can find two disjoint simple closed curves $\gamma_1,\gamma_2:\mathbb{S}\rightarrow\mathbb{R}^2$
  whose images are contained in $\partial\mathscr{X}$.
  Note
  \[\mathscr{X}\cup(\mathbb{R}^2\backslash\overline{\mathscr{X}})=\mathbb{R}^2\backslash\partial\mathscr{X}\subset
  \mathbb{R}^2\backslash(\gamma_1(\mathbb{S})\cup\gamma_2(\mathbb{S})).\]
 Both $\mathscr{X}$ and $\mathbb{R}^2\backslash\overline{\mathscr{X}}$
are path-connected (due to property (b)), while $\mathbb{R}^2\backslash(\gamma_1(\mathbb{S})\cup\gamma_2(\mathbb{S}))$
has exactly three path-connected components (due to Jordan's curve theorem). Thus
 one of the three path-connected components of $\mathbb{R}^2\backslash(\gamma_1(\mathbb{S})\cup\gamma_2(\mathbb{S}))$
 does not meet $\mathscr{X}\cup(\mathbb{R}^2\backslash\overline{\mathscr{X}})$,
 which in turn implies that this component
 is contained in the complement of $\mathscr{X}\cup(\mathbb{R}^2\backslash\overline{\mathscr{X}})$. Note that
 \[\mathbb{R}^2\backslash(\mathscr{X}\cup(\mathbb{R}^2\backslash\overline{\mathscr{X}}))=
\mathbb{R}^2\backslash(\mathbb{R}^2\backslash\partial\mathscr{X})=\partial\mathscr{X}.\]
 Consequently, we have derived a contradiction since a two-dimensional non-empty open set
 cannot be contained in a one-dimensional compact topological manifold.
 Hence $\partial\mathscr{X}$ has exactly one path-connected component. In other words, $\partial\mathscr{X}$ is a simple closed curve.
According to the Sch\"{o}nflies theorem (see Lemma \ref{SC}),
$\mathscr{X}$, the interior part bounded by the curve $\partial\mathscr{X}$,
is a topological open 2-ball.

 \textbf{Case 2}: Suppose $m\geq3$.
   According to property (c), $\partial\mathscr{X}$ is a compact $(m-1)$-dimensional topological manifold,
  so is an arbitrary path-connected component $\mathscr{Y}$ of  $\partial\mathscr{X}$.
Obviously (see also the proofs of properties (a) $\sim$ (c)),
 \[H|_{\mathscr{Y}}:\mathscr{Y}\rightarrow\partial B_r\]
  is a local homeomorphism.
Considering $\mathscr{Y}$ is a compact topological space, we see that
$H|_{\mathscr{Y}}$ is a proper map.
According to Lemma \ref{HOM}, $H|_{\mathscr{Y}}$ is a (global) homeomorphism from ${\mathscr{Y}}$
onto $\partial B_r$. In general, every path-connected component of
$\partial\mathscr{X}$ is a topological $(m-1)$-sphere.
Similar to the proof in Case 1 (Jordan's curve theorem is replaced with the Jordan-Brouwer separation theorem),
$\partial\mathscr{X}$ can only have one path-connected component.
In other words, $\partial\mathscr{X}$ itself is path-connected.
To be more accurate, $\partial\mathscr{X}$ is a topological $(m-1)$-sphere.
To summarize so far, $H|_{\partial\mathscr{X}}$ is a  homeomorphism from $\partial\mathscr{X}$ onto $\partial B_r$.
Now we are ready to prove that $H|_{\mathscr{X}}$ is a  homeomorphism from $\mathscr{X}$
onto $B_r$.

\begin{itemize}
  \item Proof of surjectivity: Let $q\in(0,r)$ be arbitrary. Considering $H(\partial\mathscr{X})=\partial B_r$,
  we see that the boundary $\mathscr{Y}_q$ of the path-connected component of $H^{-1}(B_q)$ that contains $x_0=0$
  is a subset of $\mathscr{X}$. Similar to the previous analysis in Case 2,
  $\mathscr{Y}_q$ is a topological $(m-1)$-sphere and $H|_{\mathscr{Y}_q}$
  is a  homeomorphism from $\mathscr{Y}_q$ onto $\partial B_q$. In particular,
  $H(\mathscr{Y}_q)=\partial B_q$. This suffices to
establish $H(\mathscr{X})=B_r$ because $H(x_0)=0$ and
\[B_r\backslash\{0\}=\bigcup_{0<q<r}\partial B_q=\bigcup_{0<q<r}H(\mathscr{Y}_q)=H\Big(\bigcup_{0<q<r}\mathscr{Y}_q\Big)\subset H(\mathscr{X}).\]

  \item Proof of injectivity: Following the notations in the previous surjectivity part, we claim that
  it is impossible to find an element $x^{\bullet}\in\mathscr{X}\backslash\mathscr{Y}_q$
  such that $|H(x^{\bullet})|=q$. Suppose this was not the case. Then the path-connected component of $H^{-1}(\partial B_q)$
  that contains $x^{\bullet}$, denoted by $\mathscr{Y}_{x^{\bullet}}$, is easily seen to be a topological $(m-1)$-sphere
  staying inside $\mathscr{X}$ and differs from $\mathscr{Y}_q$. According to the Jordan-Brouwer separation theorem,
the two disjoint topological $(m-1)$-spheres  $\mathscr{Y}_{x^{\bullet}}$ and $\mathscr{Y}_q$
divide \[\mathbb{R}^m\backslash(\mathscr{Y}_{x^{\bullet}}\cup\mathscr{Y}_q)\] into three path-connected
components, which are
one unbounded  and two bounded open sets. Among these  components, let $\mathscr{W}$
denote the bounded one that does not contain $x_0=0$.
The continuous non-negative function
\[x\in\overline{\mathscr{W}}\mapsto |H(x)|\in\mathbb{R},\]
 defined on the compact set $\overline{\mathscr{W}}$,
 is clearly to be constant valued on $\partial\mathscr{W}$.
 So this function attains either maximal or minimal value inside $\mathscr{W}$.
According to Lemma \ref{THMopen2},
 $H(\mathscr{W})$ is an open subset of $\mathbb{R}^m$. Thus $|H|\big|_{\overline{\mathscr{W}}}$
 cannot assume maximal value inside $\mathscr{W}$.
Consequently, the minimal value of $|H|$ on $\overline{\mathscr{W}}$
  is bound to be attainable inside $\mathscr{W}$.
  Since $H(\mathscr{W})$ is an open subset of $\mathbb{R}^m$,
  this minimal value has to be zero. But $x_0$ is the only element of $\mathscr{X}$
  whose image under $H$ is zero. Therefore, $x_0\in\mathscr{W}$, a contradiction. This proves the injectivity claim.

  \item Proof of continuity: Obviously, $H|_{\mathscr{X}}$ is continuous on $\mathscr{X}$
  and its inverse map from $B_r$ to $\mathscr{X}$ is continuous on $B_r\backslash\{0\}$ (due to Theorem \ref{strengthenedInFT}).
  It remains to show that the inverse map is continuous at $y_0=0$. Let $\{y_k\}_{k=1}^{\infty}$
  be an arbitrary sequence of elements of $B_r$ such that $y_k$ tends to $y_0$.
  Recall that $\mathscr{X}$ is contained in the closed ball $\overline{B_s}$. So if
  $(H|_{\mathscr{X}})^{-1}$ was discontinuous at $y_0$, we then can find
   a convergent subsequence of $\{(H|_{\mathscr{X}})^{-1}(y_k)\}_{k=1}^{\infty}$, written as
   $\{(H|_{\mathscr{X}})^{-1}(y_{k_i})\}_{i=1}^{\infty}$,
   whose limit $x_{\infty}$ differs from $(H|_{\mathscr{X}})^{-1}(y_0)=x_0$. Since $H$ is continuous on $\Omega$, we get that
  \[H(x_{\infty})=\lim_{i\rightarrow\infty}H((H|_{\mathscr{X}})^{-1}(y_{k_i}))=\lim_{i\rightarrow\infty}y_{k_i}=y_0,\]
  which contradicts to the fact that $x_0$ is the only element of $\Omega$
whose image under $H$ is $y_0$. This proves the continuity claim.
\end{itemize}
The whole proof of Theorem \ref{P48} now is finished.
\end{proof}


As claimed in the Introduction, we explain why  Theorem \ref{THM12}
can be improved again and again. Let $H:\Omega\rightarrow\mathbb{R}^m$ ($m\geq3$)
be a $C^1$  vector field whose critical points consist of
a convergent  sequence of distinct elements $\{x_k\}_{k=0}^{\infty}$ of $\Omega$ with limit $x_{0}$
(the interested reader may drop the $C^1$
condition and explore more general result). We claim that $H$ is a local homeomorphism on $\Omega$. According to Theorem \ref{THM12},
each critical point $x_k$, $k\geq1$, is a  topological regular point of $H$.
Therefore, $H$ is a local homeomorphism on $\Omega\backslash\{x_0\}$
and the role played by Theorem \ref{strengthenedInFT}
on the region $\Omega\backslash\{x_0\}$ in the proof of Theorem \ref{P48}
has gotten an ideal substitute. There are two more issues  we need to solve:
\begin{itemize}
  \item (i)  Is $H$ an open map?
   \item (ii) Is $H$ discrete at $x_0$?
\end{itemize}
Recall that $H$ is a local homeomorphism on $\Omega\backslash\{x_0\}$,
so it follows from Theorem \ref{THM42} that $H$ is discrete at $x_0$.
Regarding the other question, it suffices to note the following open mapping theorem of the author \cite{Li}:

\begin{lemma}\label{Lemma46}
Let  $\Omega$ be an open subset of $\mathbb{R}^m$ ($m\geq2$), and  $H:\Omega\rightarrow\mathbb{R}^m$
 a $C^1$ map with at most countably many critical points. Then $H$ is an open map.
\end{lemma}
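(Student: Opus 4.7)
The plan is to verify openness pointwise. At regular points openness is immediate from Theorem~\ref{strengthenedInFT}, so the substance is openness at each critical point $x_0$; write $y_0 := H(x_0)$ throughout.

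Since $H$ is $C^1$, the critical set $C$ is closed in $\Omega$, and by hypothesis it is countable, hence scattered: the Cantor--Bendixson derivative sequence $C \supseteq C^{(1)} \supseteq C^{(2)} \supseteq \cdots$ terminates at some countable ordinal, assigning each point of $C$ a rank. The strategy is transfinite induction on this rank. For the base case, $x_0$ is isolated in $C$: shrink to a neighbourhood $U$ with $U \cap C = \{x_0\}$ and apply Lemma~\ref{THMopen2} to $H|_U$, which has only one critical point, to conclude that $H$ is open at $x_0$.

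For the successor step at a critical point $x_0$ of rank $\alpha+1$, choose an open ball $B$ about $x_0$ with $B \cap C^{(\alpha+1)} = \{x_0\}$. Every critical point of $H|_B$ other than $x_0$ has rank $\leq \alpha$, so by the inductive hypothesis $H$ is open at each of them; consequently $H|_{B \setminus \{x_0\}}$ is an open map, and in particular $H(B \setminus \{x_0\})$ is open in $\mathbb{R}^m$. If $y_0 \in H(B \setminus \{x_0\})$, then $y_0$ lies in an open subset of $H(B)$ and openness at $x_0$ follows at once. Otherwise $H^{-1}(y_0) \cap B = \{x_0\}$, so for sufficiently small $s$ the sphere $\partial B(x_0,s)$ avoids $H^{-1}(y_0)$ and the local Brouwer degree $\deg(H, B, y_0)$ is well-defined; one then needs to show this degree is nonzero, which gives a full neighbourhood of $y_0$ inside $H(B)$. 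Limit ordinals are handled analogously via $C^{(\lambda)} = \bigcap_{\beta<\lambda} C^{(\beta)}$ and the same dichotomy.

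The main obstacle I foresee is the discrete sub-case of the successor step, namely proving $\deg(H, B, y_0) \neq 0$. My plan is to mimic the proof of Theorem~\ref{P48}: examine the path-connected component $V_r$ of $H^{-1}(B(y_0, r))$ containing $x_0$ for small $r$, establish that $\partial V_r$ is a topological $(m{-}1)$-sphere via Sch\"onflies (for $m = 2$) or Jordan--Brouwer (for $m \geq 3$), and conclude $H(V_r) \supseteq B(y_0, r)$. The subtlety is that the proof of Theorem~\ref{P48} crucially invokes Theorem~\ref{strengthenedInFT} to get a local homeomorphism on $\Omega \setminus \{x_0\}$, whereas the induction here delivers only that $H$ is an \emph{open} map on $B \setminus \{x_0\}$ (a strictly weaker property, as $z \mapsto z^2$ already shows for $m=2$). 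Closing this gap is precisely where the $C^1$ hypothesis should enter: since $C$ is countable, for generic small $s$ the sphere $\partial B(x_0, s)$ avoids $C$ entirely, so that $H|_{\partial B(x_0, s)}$ is a local diffeomorphism and the degree can be computed by the standard sign-counting over a regular fiber close to $y_0$, which is forced to be nonempty by the openness of $H$ on $B \setminus \{x_0\}$.
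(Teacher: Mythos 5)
First, a remark on comparison: the paper never proves Lemma~\ref{Lemma46} at all; it is imported as the main theorem of \cite{Li}, so there is no in-paper proof to measure you against and your argument must stand on its own. Judged that way, it has a genuine gap at exactly the step you yourself single out as the main obstacle: the nonvanishing of the local degree in the discrete sub-case. Your closing argument is that for generic small $s$ the sphere $\partial B(x_0,s)$ misses the critical set $C$, and that the degree can then be computed by sign-counting over a regular fiber near $y_0$, ``which is forced to be nonempty by the openness of $H$ on $B\setminus\{x_0\}$''. Nonemptiness of the fiber does not give a nonzero degree: the local indices are $\pm1$ and may cancel. The fold $(x_1,x_2)\mapsto(x_1^2,x_2)$ shows the inference is invalid --- near a fold point its regular fibers are nonempty while the local degree is $0$ (and the map is indeed not open); it has uncountably many critical points, so it does not contradict the lemma, but nothing in your sketch uses countability, or the inductively obtained openness on $B\setminus\{x_0\}$, to rule out cancellation. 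Two smaller slips: what makes the degree well defined is $y_0\notin H(\partial B(x_0,s))$, not $\partial B(x_0,s)\cap C=\emptyset$; and in the branch $y_0\in H(B\setminus\{x_0\})$ openness at $x_0$ does not ``follow at once'', since openness concerns arbitrarily small neighbourhoods --- the correct statement is that if $x_0$ is not isolated in $H^{-1}(y_0)$, then every small ball around $x_0$ contains another preimage of $y_0$ at which $H$ is already known to be open.

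The gap can be closed, and the repair makes most of your machinery superfluous. Since $C\cap B(x_0,s)$ is countable and $m\geq2$, the set $B(x_0,s)\setminus C$ is path-connected, and $\det DH$ is continuous and nonvanishing there, hence of constant sign; therefore all preimages of a suitably chosen nearby regular value $y\in H(B(x_0,s))$ with $|y-y_0|<\mathrm{dist}(y_0,H(\partial B(x_0,s)))$ carry the same sign, and the degree equals plus or minus the number of those preimages, which is nonzero. Moreover every fiber of $H$ is countable (regular points of a fiber are isolated in it by Theorem~\ref{strengthenedInFT}, and a relatively discrete subset of $\mathbb{R}^m$ is countable, while the critical points in the fiber lie in the countable set $C$), so at every point $x_0$ there are arbitrarily small radii $s$ with $\partial B(x_0,s)\cap H^{-1}(y_0)=\emptyset$; for such $s$ the degree is defined, and its nonvanishing puts the whole component of $\mathbb{R}^m\setminus H(\partial B(x_0,s))$ containing $y_0$ inside $H(B(x_0,s))$, proving openness at $x_0$ directly. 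With this observation the Cantor--Bendixson induction, the dichotomy, and the plan to imitate Theorem~\ref{P48} via Sch\"onflies/Jordan--Brouwer can all be dropped; without it (or some substitute, such as a minimum-modulus/connectedness argument in the spirit of the remark following Lemma~\ref{THMopen2}), the proposal does not establish the lemma.
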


Consequently, there exists an $s>0$ and  a (sufficiently small) $r>0$ such that
\begin{itemize}
  \item $\overline{B(y_0,r)}\subset H(B(x_0,s))$ where $y_0\doteq H(x_0)$,
  \item $r<\min\limits_{x\in\partial B(x_0,s)}|H(x)-y_0|$,
  \item $x_0$ is the only element of $\Omega$ (shrinking $\Omega$ if necessary) whose image under $H$ is $y_0$.
\end{itemize}
One then goes through the proof of Theorem \ref{P48} to see that  $H$ is a  homeomorphism on a small neighbourhood of $x_0$,
which confirms  that $H$ is a local homeomorphism  on $\Omega$.
 To summarize so far, we have derived the following improvement of Theorem \ref{THM12}.

\begin{theorem}\label{thm36}
Let $\Omega$ be an open subset of $\mathbb{R}^m$ ($m\geq3$), and
$H:\Omega\rightarrow\mathbb{R}^m$ a $C^1$ map
whose critical points have at most finitely many accumulation points in $\Omega$. Then $H$ is a local homeomorphism on $\Omega$.
\end{theorem}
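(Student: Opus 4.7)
\emph{Proof proposal.} The strategy is to verify the local homeomorphism property point by point on $\Omega$, splitting into three cases according to the position of $p\in\Omega$ relative to the critical set $C$ of $H$ and its set $C'$ of accumulation points in $\Omega$ (with $|C'|<\infty$ by hypothesis). If $p\notin C$, Theorem \ref{strengthenedInFT} immediately yields a local diffeomorphism at $p$. If $p\in C\setminus C'$, then $p$ is an isolated critical point of $H$, so one may shrink to a neighbourhood $V\subset\Omega$ with $V\cap C=\{p\}$ and $V\cap C'=\emptyset$ and invoke Theorem \ref{THM12} (equivalently, Theorem \ref{P48}) to conclude.

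The substantive case is $p\in C'$. Using finiteness of $C'$, I would fix an open neighbourhood $V$ of $p$ whose closure meets $C'$ only at $p$; then $p$ is the only possible accumulation point of $C$ inside $V$, and consequently every critical point of $H|_V$ different from $p$ is isolated in $\Omega$. Applying the previous case at each such point shows that $H$ is already a local homeomorphism on $V\setminus\{p\}$. This is the crucial new input that will replace Theorem \ref{strengthenedInFT} in the argument to follow, mirroring the convergent-sequence discussion sketched in the paragraph preceding the theorem.

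Two preparatory facts then become available. First, since $H$ is continuous on $V$ and a local homeomorphism on $V\setminus\{p\}$, Theorem \ref{THM42} produces a possibly smaller open neighbourhood $W\subseteq V$ of $p$ on which $H$ assumes the value $y_0\doteq H(p)$ only at $p$. Second, $C\cap V$ is at most countable (its sole accumulation point in $V$ is $p$, and a discrete subset of $\mathbb{R}^m$ is countable), so Lemma \ref{Lemma46} asserts that $H|_V$ is an open map and hence so is $H|_W$. I would then transcribe the proof of Theorem \ref{P48} with $H|_W$ in place of the original map, observing that every appeal there to Theorem \ref{strengthenedInFT} was used only to obtain local homeomorphism away from the distinguished point or openness of the image, every appeal to Lemma \ref{THMopen2} is replaced by Lemma \ref{Lemma46}, and every appeal to Theorem \ref{Lemma35} is replaced by the uniqueness property of $W$ just isolated.

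The main technical obstacle, and really the only thing requiring verification, is that the three-part analysis (a)--(c) of $\mathscr{X}$ in the proof of Theorem \ref{P48}---namely $H(\partial\mathscr{X})\subset\partial B_r$, the path-connectedness of $\mathbb{R}^m\setminus\overline{\mathscr{X}}$, and the topological manifold structure of $\partial\mathscr{X}$---together with the subsequent surjectivity, injectivity, and continuity arguments for $H|_{\mathscr{X}}$, genuinely survive this weakening of the hypothesis. A careful rereading confirms that each step uses only the three substitute ingredients isolated above, so the conclusion persists: a sufficiently small neighbourhood of $p$ is mapped homeomorphically by $H$ onto a ball $B(y_0,r)$. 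This establishes the local homeomorphism property at $p$, and since $p\in\Omega$ was arbitrary, the theorem follows.
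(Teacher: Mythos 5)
Your proposal is correct and follows essentially the same route as the paper: reduce to a neighbourhood of each accumulation point where $H$ is already a local homeomorphism off that point, invoke Theorem \ref{THM42} for discreteness and Lemma \ref{Lemma46} for openness, and then rerun the proof of Theorem \ref{P48} with these substitutes for Theorems \ref{strengthenedInFT}, \ref{Lemma35} and Lemma \ref{THMopen2}. Your case analysis at points of $C\setminus C'$ and $C'$ merely spells out more explicitly the reduction the paper sketches for a single convergent sequence of critical points.
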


Similar to the proof of the above theorem, one can improve this result again and again.
For example, the sufficient condition in Theorem \ref{thm36} can be relaxed to $\mbox{CP}^{(k)}\cap\Omega=\emptyset$
for some $k\in\mathbb{N}$ or even to $(\cap_{k=1}^{\infty}\mbox{CP}^{(k)})\cap\Omega=\emptyset$,
where $\mbox{CP}^{(k)}$ denotes the $k$-th derived set of the collection $\mbox{CP}$ of all critical points of $F$.
We end this section with a  question: can the the sufficient condition in Theorem \ref{thm36} be improved to
$\mbox{CP}$ being a countably infinite set?


\section{The planar case: winding numbers}\label{SEC4}

In this part we continue to
explore  the given map $H$ in Theorem \ref{P48} around its isolated critical point $x_0$ provided $m=2$.
Identify $\mathbb{S}$ with the unit circle $\{z\in\mathbb{C}:|z|=1\}$ of the complex plane,
and $\mathbb{R}^2$ with $\mathbb{C}$. Given any continuous map $g:\mathbb{S}\rightarrow\mathbb{S}$, there exists
a unique continuous function $\Upsilon(g):[0,1]\rightarrow\mathbb{R}$ such that
\begin{itemize}
  \item $\Upsilon(g)(0)\in[0,1)$,
  \item $g(e^{2\pi\mathrm{i}t})=e^{2\pi\mathrm{i}\Upsilon(g)(t)}$ for all $t\in[0,1]$,
   \item $\mathscr{N}(g)\doteq\Upsilon(g)(1)-\Upsilon(g)(0)\in\mathbb{Z}$.
\end{itemize}
For example, letting $g(z)=z^n$ for some $n\in\mathbb{Z}$, one  can determine $\Upsilon(g_n)(t)=nt$ for all $t\in[0,1]$.
It is well known that  $g$ is null homotopic if and only if  $\mathscr{N}(g)$,
called the winding number of $g$,
is equal to  zero. To be clear, $g$ is said to be null homotopic if there exists
 a continuous map
 \[K:[0,1]\times\mathbb{S}\rightarrow\mathbb{S}\]
such that $K(0,x)=g(x)$ for all $x\in\mathbb{S}$ and $K(1,\cdot)$ is constant-valued on $\mathbb{S}$.
 For instance,
if $g$ is a local homeomorphism, then $\Upsilon(g)$
is easily seen to be  strictly increasing or decreasing, which  implies that $\mathscr{N}(g)$ is non-zero, or equivalently
$g$
is non-null homotopic. In general,  one can similarly define winding number
for continuous maps between simple closed curves (or loops). Note that such a concept depends on
the choice of the chosen traveling directions of  the  loops. With these facts available, we further study the planar case of
Theorem \ref{P48}.

\begin{theorem}\label{THMTWO}
Let $\Omega$ be an open subset of $\mathbb{R}^2$, and let $H:\Omega\rightarrow\mathbb{R}^2$ be a differentiable map.
If $x_0\in\Omega$ is an isolated critical point of $H$, then there exists an integer $\mathscr{N}_H(x_0)>0$
such that for any sufficiently small $q>0$, the boundary of the path-connected component $\mathscr{X}_q$ of $H^{-1}(B(y_0,q))$
that contains $x_0$, where $y_0\doteq H(x_0)$, is a simple closed curve from which $H$
is a local homeomorphism onto $\partial B(y_0,q)$ with winding number $\pm\mathscr{N}_H(x_0)$. Moreover,
$H$ is a homeomorphism from  $\mathscr{X}_q$ onto $B(y_0,q)$ if and only if $\mathscr{N}_H(x_0)=1$.
\end{theorem}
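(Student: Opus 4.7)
The plan is to build on Theorem \ref{P48} (Case 1) for the topology of $\mathscr{X}_q$, then overlay the degree-theoretic content via winding numbers. After the standard reductions---shrinking $\Omega$ so that $x_0$ is the only critical point of $H$ and, by Theorem \ref{T21}, the only preimage of $y_0$---Theorem \ref{P48} already furnishes, for every sufficiently small $q>0$, that $\mathscr{X}_q$ is a topological open $2$-ball whose boundary $\partial\mathscr{X}_q$ is a simple closed curve. Because every point of $\partial\mathscr{X}_q$ is a regular point of $H$, Theorem \ref{strengthenedInFT} combined with the boundary identity analogous to (\ref{H32}) in the proof of Theorem \ref{T21} shows that $H|_{\partial\mathscr{X}_q}$ is a local homeomorphism from the topological circle $\partial\mathscr{X}_q$ onto the circle $\partial B(y_0,q)$.

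Next, I would fix a parametrization $\gamma_q:\mathbb{S}\to\partial\mathscr{X}_q$ of the Jordan curve $\partial\mathscr{X}_q$ and define $\mathscr{N}_H(x_0)$ to be the absolute value of the winding number of $H\circ\gamma_q:\mathbb{S}\to\mathbb{R}^2\setminus\{y_0\}$ around $y_0$. Since $H|_{\partial\mathscr{X}_q}$ is a local homeomorphism between circles, its lift $\Upsilon(H\circ\gamma_q)$ is strictly monotone on $[0,1]$, forcing this integer to be nonzero; hence $\mathscr{N}_H(x_0)\geq 1$ and the winding number of $H\circ\gamma_q$ is precisely $\pm\mathscr{N}_H(x_0)$. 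To see that $\mathscr{N}_H(x_0)$ is independent of the sufficiently small choice of $q$, note that for $0<q_1<q_2$ both small, the curves $\partial\mathscr{X}_{q_1}$ and $\partial\mathscr{X}_{q_2}$ both lie in the topological annulus $\mathscr{X}_{q_2}\setminus\{x_0\}$ and each encloses $x_0$, so they are freely homotopic as simple closed curves in this annulus; composing such a homotopy with $H$ produces a homotopy in $\mathbb{R}^2\setminus\{y_0\}$ (recall $x_0$ is the sole preimage of $y_0$), and the winding number is a homotopy invariant.

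For the final equivalence, the ``only if'' direction is direct: assuming $H|_{\mathscr{X}_q}$ is a homeomorphism onto $B(y_0,q)$, the continuous extension to $\overline{\mathscr{X}_q}$ is in fact a homeomorphism onto $\overline{B(y_0,q)}$. Indeed, interior and boundary points cannot share an image (their images lie in $B(y_0,q)$ and $\partial B(y_0,q)$ respectively), while two distinct points of $\partial\mathscr{X}_q$ sharing an image would, via the local homeomorphism property of $H$ at each and the open mapping of interior neighbourhoods onto $B(y_0,q)$, spawn distinct interior points with a common image, contradicting injectivity. Hence $H|_{\partial\mathscr{X}_q}$ is a homeomorphism of circles and $\mathscr{N}_H(x_0)=1$. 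For the ``if'' direction, when $\mathscr{N}_H(x_0)=1$ the boundary map $H|_{\partial\mathscr{X}_p}$ is a covering of degree $\pm 1$, hence a homeomorphism, for every sufficiently small $p\in(0,q]$; surjectivity of $H|_{\mathscr{X}_q}$ onto $B(y_0,q)$ then follows verbatim from the surjectivity argument in the proof of Theorem \ref{P48}.

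The main obstacle is injectivity. Suppose $x_1\neq x_2$ in $\mathscr{X}_q$ satisfy $H(x_1)=H(x_2)$ and set $p:=|H(x_1)-y_0|\in(0,q)$ (since only $x_0$ maps to $y_0$). Because $H|_{\partial\mathscr{X}_p}$ is injective, one may assume $x_2\notin\overline{\mathscr{X}_p}$; let $\mathscr{C}$ be the path-connected component of $\mathscr{X}_q\cap H^{-1}(\partial B(y_0,p))$ containing $x_2$. The inequality $p<q$ keeps $\mathscr{C}$ disjoint from $\partial\mathscr{X}_q$, so $\mathscr{C}$ is compact; being composed of regular points, $\mathscr{C}$ is a compact connected $1$-manifold, hence a simple closed curve, and it is disjoint from $\partial\mathscr{X}_p$. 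A short Jordan curve analysis within the topological disk $\mathscr{X}_q$---ruling out the geometrically impossible case that $\partial\mathscr{X}_q$ would be enclosed by $\mathscr{C}$---isolates one of the bounded regions of $\mathbb{R}^2\setminus(\mathscr{C}\cup\partial\mathscr{X}_p)$, call it $\mathscr{W}$, which lies entirely in $\mathscr{X}_q$, does not contain $x_0$, and satisfies $|H-y_0|\equiv p$ on $\partial\mathscr{W}$. The open mapping theorem (Lemma \ref{THMopen2}) precludes an interior maximum of $|H-y_0|$ on $\overline{\mathscr{W}}$; an interior minimum is therefore attained, and the same lemma forces the minimum value to be $0$---contradicting $x_0\notin\mathscr{W}$. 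Continuity of the inverse of $H|_{\mathscr{X}_q}$ at $y_0$ is routine and mirrors the continuity argument at the end of the proof of Theorem \ref{P48}.
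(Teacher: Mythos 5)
Your proposal is correct in substance, but it reaches the radius-independence of $\mathscr{N}_H(x_0)$ by a different mechanism than the paper. The paper, after the same reduction via Theorem \ref{P48} and the Sch\"onflies parametrization, proves the stronger counting identity $\sharp\{x\in\mathscr{X}:H(x)=w\}=|\mathscr{N}(G)|$ for every $w\in B_r\backslash\{0\}$: it shows $(H|_{\mathscr{X}})^{-1}(\partial B_q)=\mathscr{Y}_q$, that the fibre count on $\mathscr{Y}_q$ is constant in $w$, and then that this count is locally constant in $q$ by a finite-cover compactness argument around $\mathscr{Y}_{q^{\bullet}}$; the ``if and only if'' clause then falls out of this identity. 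You instead define $\mathscr{N}_H(x_0)$ as the winding number of $H$ on $\partial\mathscr{X}_q$ about $y_0$ and obtain $q$-independence from homotopy invariance, using a free homotopy between the nested Jordan curves $\partial\mathscr{X}_{q_1}$ and $\partial\mathscr{X}_{q_2}$ in the punctured region (this needs the annulus-type consequence of the Sch\"onflies theorem, and note the small slip: $\partial\mathscr{X}_{q_2}$ lies in $\overline{\mathscr{X}_{q_2}}\setminus\{x_0\}$, not in $\mathscr{X}_{q_2}\setminus\{x_0\}$, so the homotopy should be taken in the closed region between the curves). For the sufficiency of $\mathscr{N}_H(x_0)=1$ you prove injectivity directly: degree-$\pm1$ covering of circles on $\partial\mathscr{X}_p$, plus the exclusion of a second preimage component $\mathscr{C}$ via the bounded region $\mathscr{W}$ and the open-mapping/minimum-modulus trick --- which is exactly the Jordan-curve adaptation of the paper's injectivity argument in Case 2 of Theorem \ref{P48}, so your level of detail there matches the paper's own appeal to that case. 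The trade-off: your route is shorter and more conceptual for the stated theorem, but it does not deliver the multiplicity formula (\ref{aim}) (every value in $B_r\backslash\{0\}$ has exactly $\mathscr{N}_H(x_0)$ preimages in $\mathscr{X}$), which the paper's argument yields as a by-product and which is the natural ``local degree'' interpretation of $\mathscr{N}_H(x_0)$; also be sure to record, as the paper does, that surjectivity of the boundary map onto $\partial B(y_0,q)$ follows from the nonvanishing winding number (or from openness/closedness of the image in the circle), since your necessity argument and the surjectivity step borrowed from Theorem \ref{P48} both rely on $H(\mathscr{Y}_p)=\partial B_p$.
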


\begin{proof}
According to the proof of Theorem \ref{P48}, we may assume without loss of generality
that $x_0$ is the only critical point of $H$ in $\Omega$, $y_0=0$, and there exists
 an $r>0$
such that the path-connected component  of $H^{-1}(B(y_0,r))$ that contains $x_0$, denoted by $\mathscr{X}$,
 is a topological open 2-ball whose boundary $\partial\mathscr{X}$ is a simple closed curve in $\Omega$.
It follows from the Sch\"{o}nflies theorem that there exists a  homeomorphism $\Phi$
from $\mathbb{B}^2\doteq\{x\in\mathbb{R}^2:|x|\leq1\}$ onto $\overline{\mathscr{X}}$
whose restriction onto $\partial\mathbb{B}^2=\mathbb{S}$
is a  homeomorphism from $\mathbb{S}$ onto $\partial\mathscr{X}$.
Since $H|_{\partial\mathscr{X}}:\partial\mathscr{X}\rightarrow\partial B_r$ (see  the proof of Theorem \ref{P48}
for why $H(\partial\mathscr{X})\subset\partial B_r$) is a local homeomorphism, we see that the composition map
\[G:z\mapsto \frac{H(\Phi(z))}{r}\]
is  a local homeomorphism from $\mathbb{S}$ to $\mathbb{S}$.
Thus the winding number  $\mathscr{N}(G)$ of $G$ is non-zero,
which, combined with the definition of $\Upsilon(G)$, implies that $G$ is a surjective map onto $\mathbb{S}$. Therefore, $H(\partial\mathscr{X})=\partial B_r$.
Considering $\Upsilon(G)$ is strictly increasing or decreasing on $[0,1]$, we also have
\begin{equation}\sharp\{x\in\partial\mathscr{X}:H(x)=y\}=|\mathscr{N}(G)|\end{equation}
for all $y\in\partial B_r$. Based on this result, we further claim that
\begin{equation}\label{aim}\sharp\{x\in\mathscr{X}:H(x)=w\}=|\mathscr{N}(G)|\end{equation}
for all $w\in B_r\backslash\{0\}$. A sketch of proof is as follows.
Similar to the corresponding discussions in Case 2 of Theorem \ref{P48}
such as  denoting the boundary of the path-connected component of $H^{-1}(B_q)$ ($q\in(0,r]$) that contains $x_0=0$
still by $\mathscr{Y}_q$, it is not hard to establish that
\begin{itemize}
\item $\mathscr{Y}_q$ is a simple closed curve,
  \item $H(\mathscr{Y}_q)=\partial B_q$,
  \item $(H|_{\mathscr{X}})^{-1}(\partial B_q)=\mathscr{Y}_q$ (due to the open mapping property of $H$),
  \item $\sharp\{x\in\mathscr{Y}_q:H(x)=w\}$ is independent of $w\in\partial B_q$ (the common value  $\doteq\mathscr{N}_q$).
\end{itemize}
Thus prove (\ref{aim}), it suffices to show that $\mathscr{N}_q$ is independent of $q\in(0,r]$.
To this end, let $q^{\bullet}\in(0,r]$, $w^{\bullet}\in\partial B_{q^{\bullet}}$
be arbitrary, and denote
\[\{x\in\mathscr{Y}_{q^{\bullet}}:H(x)=w^{\bullet}\}=\{x_k^{\bullet}:k=1,2,\ldots,\mathscr{N}_{q^{\bullet}}\}.\]
Applying  standard compactness arguments to the compact subset $\mathscr{Y}_{q^{\bullet}}$ of $\mathbb{R}^2$
via Theorem \ref{strengthenedInFT}, one can find a $\gamma>0$, open neighbourhoods $U_{x_k^{\bullet}}$
of $x_k^{\bullet}$, and finitely many open subsets $\{U_{i}\}_{i=1}^{n}$ of $\mathbb{R}^2$ such that
\begin{itemize}
  \item (i) each $H|_{U_{x_k^{\bullet}}}$ is a homeomorphism from $U_{x_k^{\bullet}}$ onto $B(w^{\bullet},\gamma)$,
   \item (ii) $\{U_{x_k^{\bullet}}\}_{k=1}^{\mathscr{N}_{q^{\bullet}}}$ are pairwise disjoint,
  \item (iii) each $\overline{H(U_{i})}$ does not contain $w^{\bullet}$,
  \item (iv) $\{U_{x_k^{\bullet}}\}_{k=1}^{\mathscr{N}_{q^{\bullet}}}$ and  $\{U_{i}\}_{i=1}^{n}$ form an open cover of $\mathscr{Y}_{q^{\bullet}}$.
\end{itemize}
Suppose $w\in B_r\backslash\{0\}$ is sufficiently close to $w^{\bullet}$. Then
$(H|_{\mathscr{X}})^{-1}(\{w\})$, a subset of $\mathscr{Y}_{|w|}$, has to be covered by the union of $\{U_{x_k^{\bullet}}\}_{k=1}^{\mathscr{N}_{q^{\bullet}}}$ and  $\{U_{i}\}_{i=1}^{n}$ (due to (iv)) and stays away
from
 $\{U_{i}\}_{i=1}^{q}$ (due to (iii)). Therefore, $(H|_{\mathscr{X}})^{-1}(\{w\})$ is contained only in $\{U_{x_k^{\bullet}}\}_{k=1}^{\mathscr{N}_{q^{\bullet}}}$, which  combined with (i) and (ii) shows that
 $\mathscr{N}_{|w|}=\mathscr{N}_{q^{\bullet}}$. So we have proved that $\mathscr{N}_q$
a locally constant-valued function. This suffices to
establish (\ref{aim}). Because of this property we also write $|\mathscr{N}(G)|$
simply as $\mathscr{N}_H(x_0)$. To finish the proof of Theorem \ref{THMTWO}, it obviously suffices  to show that
$H$ is a homeomorphism from  $\mathscr{X}$ onto $B(y_0,r)$ if and only if $\mathscr{N}_H(x_0)=1$.
The necessity part is  evident; while as regard to the sufficiency part, one can  deduce from  $H(\mathscr{Y}_q)=\partial B_q$ for all $q\in(0,r)$ that $H$ is surjective, argue from (\ref{aim})
that $H|_{\mathscr{X}}$ is injective, which in turn implies that $(H|_{\mathscr{X}})^{-1}$ is continuous.
We are done.
\end{proof}

\begin{example}
Given $H(z)=z^n$ ($n\in\mathbb{N}$, $n\geq2$), it is easy to see that $\mathscr{N}_{H}(z=0)=n$.
Suppose $F(z)=z|z|^2$, then it is not hard to show that $\mathscr{N}_{F}(z=0)=1$.
\end{example}

\section{Proofs of Theorems \ref{THM11} and \ref{THMtopology}}

This section is devoted to proving Theorems \ref{THM11} and \ref{THMtopology}.
First, let us recall some basic knowledge about Brouwer's fixed point theorem (\cite{Park,Park2}),
which states that every continuous map
from \[\mathbb{B}^m\doteq\{x\in\mathbb{R}^m:|x|\leq1\}\]
to itself has at least one fixed point. This property is trivial when $m=1$, so from now on we assume $m\geq2$. Among plenty of equivalent
statements of this classical result \cite{Park,Park2}, we are particularly interested in
the so-called No Retraction Theorem:
\begin{itemize}
  \item the identify map on $\mathbb{S}^{m-1}=\partial\mathbb{B}^m$ is non-null homotopic.
\end{itemize}
To be clear,  a continuous map $g$ from $\mathbb{S}^{m-1}$ to $\mathbb{S}^{m-1}$ is said to be null homotopic if there exists
 a continuous map
 \begin{equation}\label{F50}K:[0,1]\times\mathbb{S}^{m-1}\rightarrow\mathbb{S}^{m-1}\end{equation}
such that $K(0,x)=g(x)$ for all $x\in\mathbb{S}^{m-1}$ and $K(1,\cdot)$ is constant-valued on $\mathbb{S}^{m-1}$ (see also Section \ref{SEC4}).
Equivalently, $g$ is null homotopic if and only if there exists a continuous map
 \begin{equation}\label{F520}\widetilde{K}:\mathbb{B}^m\rightarrow\mathbb{S}^{m-1}\end{equation} whose restriction onto $\partial\mathbb{B}^m=\mathbb{S}^{m-1}$
 is $g$ because $K$ and $\widetilde{K}$ are related to each other by
the relation
$K(t,x)=\widetilde{K}((1-t)x)$
for all $t\in[0,1]$ and $x\in\mathbb{S}^{m-1}$. Consequently,
if there is a continuous map $G:\mathbb{B}^m\rightarrow\mathbb{R}^m$
that sends $\partial\mathbb{B}^m$ into $\mathbb{R}^m\backslash\{0\}$
and if the map
\[x\in \mathbb{S}^{m-1}\mapsto\frac{G(x)}{|G(x)|}\in \mathbb{S}^{m-1}\]
is non-null homotopic, then one can find an  $x^{\bullet}$ in the interior of $\mathbb{B}^m$ such that $G(x^{\bullet})=0$; otherwise,
given $G$ maps $\mathbb{B}^m$ into $\mathbb{R}^m\backslash\{0\}$, one  derives a contradiction
from
\[\widetilde{K}:x\in \mathbb{B}^{m}\mapsto\frac{G(x)}{|G(x)|}\in \mathbb{S}^{m-1}\]
because the restriction of $\widetilde{K}$ onto $\partial\mathbb{B}^m=\mathbb{S}^{m-1}$  has been assumed to be non-null homotopic.
Thus we have derived an old result which can be found in \cite[Thm. 1.1.1]{Nirenberg} by Nirenberg.

\begin{prop}\label{prop51}
Let $G:\mathbb{B}^m\rightarrow\mathbb{R}^m$ ($m\geq2$) be a continuous map such that it sends $\partial\mathbb{B}^m$ into $\mathbb{R}^m\backslash\{0\}$
and the map
\[x\in \mathbb{S}^{m-1}\mapsto\frac{G(x)}{|G(x)|}\in \mathbb{S}^{m-1}\]
is non-null homotopic. Then there exists an  $x^{\bullet}$ in the interior of $\mathbb{B}^m$ such that $G(x^{\bullet})=0$.
\end{prop}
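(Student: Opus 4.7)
The plan is to argue by contradiction, invoking the No Retraction Theorem recalled just above the statement. The central observation is the equivalence between formulations (\ref{F50}) and (\ref{F520}) of null-homotopy: a continuous map $g:\mathbb{S}^{m-1}\to\mathbb{S}^{m-1}$ is null homotopic if and only if it admits a continuous extension $\widetilde{K}:\mathbb{B}^m\to\mathbb{S}^{m-1}$ with $\widetilde{K}|_{\partial\mathbb{B}^m}=g$. One direction is witnessed explicitly by $K(t,x)=\widetilde{K}((1-t)x)$; the reverse recovers $\widetilde{K}$ from $K$ by collapsing radii, using that $K(1,\cdot)$ is constant.

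I would then suppose, for the sake of contradiction, that $G$ has no zero in the interior of $\mathbb{B}^m$. Combined with the hypothesis that $G$ sends $\partial\mathbb{B}^m$ into $\mathbb{R}^m\backslash\{0\}$, this would mean that $G$ is nowhere zero on the entire closed ball $\mathbb{B}^m$, so that
\[\widetilde{K}:x\in\mathbb{B}^m\longmapsto\frac{G(x)}{|G(x)|}\in\mathbb{S}^{m-1}\]
is a well-defined continuous map from $\mathbb{B}^m$ into $\mathbb{S}^{m-1}$ whose restriction to $\partial\mathbb{B}^m=\mathbb{S}^{m-1}$ coincides with the map $x\mapsto G(x)/|G(x)|$ that the hypothesis declares to be non-null homotopic.

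By the equivalence above, the mere existence of such an extension $\widetilde{K}$ forces its boundary restriction to be null homotopic, directly contradicting the hypothesis. Hence $G$ must vanish at some $x^{\bullet}$ in the interior of $\mathbb{B}^m$, which is exactly the conclusion sought.

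The main obstacle is essentially absent here: once the No Retraction Theorem (equivalently, Brouwer's fixed point theorem) is taken as given, the proposition reduces to a brief contrapositive. The only technical checks are the continuity of $y\mapsto y/|y|$ on $\mathbb{R}^m\backslash\{0\}$ (routine) and the tautological identification of $\widetilde{K}|_{\partial\mathbb{B}^m}$ with the hypothesized quotient map. No independent homotopy has to be built, since the existence of $\widetilde{K}$ itself certifies the null-homotopy of its boundary restriction.
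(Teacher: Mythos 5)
Your argument is correct and is essentially identical to the paper's own proof: assume no interior zero, note $G$ is then nowhere zero on all of $\mathbb{B}^m$, form $\widetilde{K}(x)=G(x)/|G(x)|$ as a continuous map $\mathbb{B}^m\to\mathbb{S}^{m-1}$, and use the extension formulation (\ref{F520}) of null-homotopy (via $K(t,x)=\widetilde{K}((1-t)x)$) to contradict the hypothesis that the boundary map is non-null homotopic. Nothing is missing.
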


The power of this trivial proposition depends on which maps from $\mathbb{S}^{m-1}$ to $\mathbb{S}^{m-1}$ are already known to be
 non-null homotopic. A close examination of the  proof can actually give us more:
$\overline{B(0,r)}$ lies in the range of $G$ where $r=\min\limits_{x\in\partial\mathbb{B}^m}|G(x)|$.
Combining the No Retraction Theorem with this strengthened result, we obtain Lax's formulation \cite{Lax}
of the Intermediate Value Theorem for continuous maps which claims that any continuous map $G$ from $\mathbb{B}^m$
to $\mathbb{R}^m$ whose restriction onto $\partial\mathbb{B}^m$ is the identity map must satisfy $\mathbb{B}^m\subset G(\mathbb{B}^m)$.

In general, two continuous maps $f,g:\mathbb{S}^{m-1}\rightarrow\mathbb{S}^{m-1}$
are said to be homotopically equivalent if there exists a continuous map $K$ of the  form (\ref{F50}) such that $f=K(0,\cdot)$
and $g=K(1,\cdot)$. It is well known that homotopy is an equivalence relation.

We are ready to present a proof of Theorem \ref{THM11}, which is stated below as Theorem \ref{THM410}.

\begin{theorem}\label{THM410}
Let $\Omega$ be an open subset of $\mathbb{R}^{n}\times\mathbb{R}^m$, $n\geq1$, $m\geq2$, and
let $F=F(x,y)$
be a continuous map from  $\Omega$   to $\mathbb{R}^m$.
Assume $(x_0,y_0)$ is a point of $\Omega$ such that
the restriction of $F$ onto the section
\[\Omega_{x_0}\doteq\{y\in\mathbb{R}^m:(x_0,y)\in\Omega\}\]
is a differentiable map having $y_0$ as one of its isolated critical points.
Then  there exists an open neighbourhood $U\times V\subset \Omega$
of $(x_0,y_0)$
and a map $g:U\rightarrow V$  such that \[F(x,g(x))=F(x_0,y_0)\]
for all $x\in U$. Moreover, $g$ is continuous at $x=x_0$ as long as $V$ is  sufficiently small.
\end{theorem}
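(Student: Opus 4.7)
The plan is to combine the topological description of $F(x_0,\cdot)$ near $y_0$ supplied by Theorems \ref{P48} and \ref{THMTWO} with a Brouwer-type existence argument (Proposition \ref{prop51}) that is stable under small perturbations in $x$.

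First I would translate so that $(x_0,y_0)=(0,0)$ and $F(0,0)=0$, and write $H(y)=F(0,y)$. By Theorem \ref{Lemma35} we may shrink the $y$-section to assume that $y=0$ is the only zero of $H$ there and its only critical point. Theorem \ref{P48} (together with Theorem \ref{THMTWO} in the planar case $m=2$) then yields an $r>0$ such that the path-connected component $\mathscr{X}$ of $H^{-1}(B(0,r))$ containing $0$ is a topological open $m$-ball whose boundary $\partial\mathscr{X}$ is a topological $(m-1)$-sphere, and
\[
H|_{\partial\mathscr{X}}:\partial\mathscr{X}\longrightarrow\partial B(0,r)
\]
has non-zero topological degree — in fact a homeomorphism when $m\geq3$ (via Lemma \ref{HOM}) and a map of non-zero winding number $\pm\mathscr{N}_H(0)$ when $m=2$. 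Using Lemma \ref{HOM} in higher dimensions and the Sch\"onflies theorem in the plane I fix a homeomorphism $\Phi:\mathbb{B}^m\to\overline{\mathscr{X}}$ and observe that
\[
\phi_0:z\in\mathbb{S}^{m-1}\longmapsto\frac{H(\Phi(z))}{r}\in\mathbb{S}^{m-1}
\]
is non-null homotopic.

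Next I would perturb in $x$. Since $|H(\Phi(\cdot))|=r$ on the compact set $\mathbb{S}^{m-1}$, joint continuity of $F$ on $\Omega$ produces an open neighbourhood $U\subset\mathbb{R}^n$ of $0$, with $U\times\overline{\mathscr{X}}\subset\Omega$, on which
\[
|F(x,\Phi(z))-H(\Phi(z))|<\tfrac{r}{2}\qquad\text{for every }z\in\mathbb{S}^{m-1}.
\]
Then $G_x(z)\doteq F(x,\Phi(z))$ is non-zero on $\mathbb{S}^{m-1}$, and the straight-line homotopy $K(t,z)=(1-t)G_0(z)+tG_x(z)$ satisfies $|K(t,z)|\geq r/2>0$ throughout $[0,1]\times\mathbb{S}^{m-1}$; normalising exhibits $\phi_x:z\mapsto G_x(z)/|G_x(z)|$ as homotopic to $\phi_0$, hence non-null homotopic. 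Proposition \ref{prop51} applied to $G_x:\mathbb{B}^m\to\mathbb{R}^m$ then supplies, for every $x\in U$, a $z_x$ in the interior of $\mathbb{B}^m$ with $G_x(z_x)=0$. Setting $g(x)\doteq\Phi(z_x)$ and $V\doteq\mathscr{X}$ gives the required map $g:U\to V$ with $F(x,g(x))=0$.

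Continuity of $g$ at $x_0=0$ would then follow by a sequential compactness argument, provided $V=\mathscr{X}$ has been chosen small enough that $0$ is the unique zero of $H$ in $\overline{\mathscr{X}}$ (guaranteed by Theorem \ref{Lemma35} together with $H(\partial\mathscr{X})\subset\partial B(0,r)$): if $x_k\to 0$ in $U$, then $\{g(x_k)\}\subset\overline{\mathscr{X}}$ is bounded and any convergent subsequence $g(x_{k_j})\to y^{\star}$ satisfies $H(y^{\star})=\lim_j F(x_{k_j},g(x_{k_j}))=0$, forcing $y^{\star}=0$ and hence $g(x_k)\to g(0)$. The main obstacle, I expect, is securing the non-nullity of the boundary map $H|_{\partial\mathscr{X}}$ uniformly for all $m\geq2$ — which is precisely where Theorems \ref{P48} and \ref{THMTWO} do the decisive work — after which the homotopy-perturbation argument and the pointwise invocation of Proposition \ref{prop51} are routine.
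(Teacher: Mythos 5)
Your proposal is correct and follows essentially the same strategy as the paper: establish that the boundary map of $F(x_0,\cdot)$ on a small topological ball around $y_0$ (via Theorem \ref{P48} for $m\geq3$ and Theorem \ref{THMTWO} for $m=2$) is non-null homotopic, show this persists for $x$ near $x_0$ by a homotopy keeping the boundary values away from $0$, apply Proposition \ref{prop51} to get $g(x)$, and deduce continuity at $x_0$ from the uniqueness of the zero of $F(x_0,\cdot)$ in $\overline{V}$ guaranteed by Theorem \ref{Lemma35}. The only differences are cosmetic: you treat all $m\geq2$ uniformly through the ball $\mathscr{X}$ and a straight-line homotopy in the target (and should note that your $\Phi$ for $m\geq3$ is obtained from $H|_{\overline{\mathscr{X}}}$ itself, which also makes $\phi_0$ literally the identity), whereas the paper splits into $m\geq3$, where it composes with $G^{-1}$ from Theorem \ref{THM12}, and $m=2$, where it uses the Sch\"onflies homeomorphism, with the homotopy taken along the segment from $x_0$ to $x$.
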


\begin{proof} We assume without loss of generality that $y_0=F(x_0,y_0)=0$, and first deal with the case $m\geq3$.
According to Theorem \ref{THM12},
there exists an $r>0$ such that $B(y_0,r)\subset\Omega_{x_0}$ and the map \[G:y\in B(y_0,r)\mapsto F(x_0,y)\in\mathbb{R}^m\]
is a homeomorphism from $B(y_0,r)$ onto some open neighbourhood $W$ of  the origin in $\mathbb{R}^m$.
Since $F$ is continuous at $(x_0,y_0)$ and $F(x_0,y_0)=0$ is an interior point of $W$,  we can pick a  positive constant $s$, $s<r$, such that the image of the  subset
 $\overline{B(x_0,s)}\times \overline{B(y_0,s)}$ of $\Omega$ under $F$ is contained in $W$.
Obviously,
\[\min_{|y|=s}|F(x_0,y)|>0.\]
So by considering the uniform continuity of $|F|$ on the compact set $\overline{B(x_0,s)}\times \overline{B(y_0,s)}$,
we can pick another positive constant $\gamma$, $\gamma< s$, such that
\begin{equation}\label{F51}
\min_{|x-x_0|\leq\gamma,\atop|y|=s}|F(x,y)|\geq\frac{1}{2}\min_{|y|=s}|F(x_0,y)|>0.
\end{equation}
Note that for any $x\in\mathbb{R}^n$ with $|x-x_0|\leq\gamma$, the map
\[K:(t,z)\in[0,1]\times\mathbb{S}^{m-1}\mapsto \frac{G^{-1}\big(F((1-t)x_0+tx,sz)\big)}{\big|G^{-1}\big(F((1-t)x_0+tx,sz)\big)\big|}\in\mathbb{S}^{m-1}\]
is a homotopy between  the identify map $K(0,\cdot)$ on $\mathbb{S}^{m-1}$ and $K(1,\cdot)$
that sends $z\in\mathbb{S}^{m-1}$
to
\[\frac{G^{-1}\big(F(x,sz)\big)}{\big|G^{-1}\big(F(x,sz)\big)\big|}\in\mathbb{S}^{m-1}.\]
Thus $K(1,\cdot)$ is non-null  homotopic. We then apply Proposition \ref{prop51} to the continuous map
\[z\in\mathbb{B}^m\mapsto G^{-1}\big(F(x,sz)\big)\in\mathbb{R}^m\]
to get an element $z^{\bullet}$ in the interior of $\mathbb{B}^m$ such that $G^{-1}(F(x,sz^{\bullet}))=0$, or equivalently
$F(x,sz^{\bullet})=0$. To conclude the proof of Case $m\geq3$, it suffices to set $U=B(x_0,\gamma)$ and $V=B(y_0,s)$.

The case $m=2$ can be dealt with in much the same way. We  first apply Theorem \ref{THMTWO} to
get a  topological open 2-ball $\mathscr{X}$,
which is a neighbourhood of $y_0=0$
  as well as a subset of $\Omega_{x_0}$, such that $F(x_0,\cdot)$ maps $\partial\mathscr{X}$
  onto $\partial B(y_0,q)$, $q$ some   positive constant, and the  winding number of this map
  is non-zero.
The Sch\"{o}nflies theorem then yields a homeomorphism $\widetilde{G}$ from $\mathbb{B}^2$ onto $\overline{\mathscr{X}}$
  which maps $\partial \mathbb{B}^2$ onto $\partial\mathscr{X}$. After that,
  apply Proposition \ref{prop51} to the continuous map
  \[z\in\mathbb{B}^2\mapsto F(x,\widetilde{G}(sz))\in\mathbb{R}^2\]
 provided $s$ is some positive constant such that $\overline{B(x_0,s)}\times \widetilde{G}(s\mathbb{B}^2)\subset\Omega$
 and $x$ lies in $\overline{B(x_0,\gamma)}$,
 where $\gamma<s$ is another sufficiently small positive constant that ensures
  \[\min_{|x-x_0|\leq\gamma,\atop |z|=1}|F(x,\widetilde{G}(sz))|>0.\]
 The only issue left is why
\[H:z\in\mathbb{S}\mapsto\frac{F(x_0,\widetilde{G}(sz))}{|F(x_0,\widetilde{G}(sz))|}\in\mathbb{S}\]
is non-null homotopic as replacing $x_0$ with an arbitrary $x\in \overline{B(x_0,\gamma)}$  would yield the same homotopy type.
To this end, note that $H$ is homotopically equivalent to
\[z\in\mathbb{S}\mapsto\frac{F(x_0,\widetilde{G}(z))}{|F(x_0,\widetilde{G}(z))|}\in\mathbb{S},\]
whose winding number
 is  the same as that of
$y\in\partial\mathscr{X}\mapsto F(x_0,y)\in \partial B(y_0,q)$
despite a possible $\pm1$ sign. Therefore, $H$ is non-null homotopic.
  To conclude the proof of Case $m=2$, we set once again $U=B(x_0,\gamma)$ and $V=B(y_0,s)$.

No matter $m\geq3$ or $m=2$, we are able to obtain an open neighbourhood $U\times V\subset \Omega$
of $(x_0,y_0)$
and a map $g:U\rightarrow V$  such that $F(x,g(x))=F(x_0,y_0)$
for all $x\in U$. To ensure that $g$ is continuous at $x=x_0$,
it suffices to argue by contradiction and assume in advance that $y_0$ is the only element of $\overline{V}$
whose image under $F(x_0,\cdot)$ is $F(x_0,y_0)$,
which is doable  because of Theorem \ref{Lemma35}. This finishes the whole proof of Theorem \ref{THM410}.
\end{proof}

\begin{example} Consider the polynomial
\[F(x,y)=(x-1)^2+y^2-1\]
on $\mathbb{R}\times\mathbb{R}$.
Since $\frac{\partial F}{\partial y}(0,0)=0$, the classical ImFT does not apply to $F$ at the origin of $\mathbb{R}^2$.
Note that if $x<0$, then
$F(x,y)\geq(x-1)^2-1>0=F(0,0).$ Thus it is indeed impossible to find, locally around the origin $(0,0)$,
 some solutions $(x,y)$ to the equation $F(x,y)=F(0,0)$
that are of the  form $(x,y(x))$, $x\in I$, $I$ a one-dimensional open
neighbourhood of $x=0$. Note $\frac{\partial F}{\partial y}(0,y)=2y$, so $y=0$ is an isolated critical point of $F(0,\cdot)$.
Therefore, Theorem \ref{THM410} is no longer true if the dimension $m$ of the $y$-variable is  one.
\end{example}

\begin{example}
Consider the polynomial
\[K(z,w)=z^2-w^4\]
on $\mathbb{C}^2$, which is identified with a map from $\mathbb{R}^2\times\mathbb{R}^2$ to $\mathbb{R}^2$
so that one may apply Theorem \ref{THM410}. Obviously,
$w=0$ is an isolated critical point of the restriction of $K$ onto $\{z=0\}\times\mathbb{C}$. So
one can apply Theorem \ref{THM410} to extract certain information of local solutions $(z,w)$ to the equation $K(z,w)=K(0,0)=0$
around the origin of $\mathbb{C}^2$.  Denote $z=re^{\mathrm{i}\theta}$ ($r\geq0$, $0\leq\theta<2\pi$) in polar coordinates.
Define $g(z)=\sqrt{r}e^{\mathrm{i}\frac{\theta}{2}}$ if $r\geq0$ is rational, and $g(z)=-\sqrt{r}e^{\mathrm{i}\frac{\theta}{2}}$
otherwise. Then $K(z,g(z))=0$ for all $z\in\mathbb{C}$ and $g$ is continuous merely at $z=0$.
This shows that the conclusion of Theorem \ref{THM410} concerning continuity cannot be improved anymore (at least for $n=m=2$).
Can we find an \emph{everywhere continuous}  solution map on some  open neighbourhood of $z=0$?
The answer is also NO because it is impossible to define $z\mapsto\sqrt{z}$ continuously
 on  non-simply connected open subsets of $\mathbb{C}\backslash\{0\}$.
\end{example}

We explicitly  state Theorem \ref{THMtopology}  below as Theorem \ref{thm55} for the sake of convenience.

\begin{theorem}\label{thm55}
Let $\Omega$ be an open subset of $\mathbb{R}^{n}\times\mathbb{R}^m$, and
let $F=F(x,y)$
be a continuous map from  $\Omega$   to $\mathbb{R}^m$.
Suppose  for each $x\in\mathbb{R}^n$,
the restriction of $F$ onto the $y$-section
$\Omega_{x}$
is a local homeomorphism.
Then  for any $(x_0,y_0)\in\Omega$, there exists an open neighbourhood $U\times V\subset \Omega$
of $(x_0,y_0)$
and a continuous map $g:U\rightarrow V$  such that for any $(x,y)\in U\times V$, $F(x,y)=F(x_0,y_0)\Leftrightarrow y=g(x)$.
\end{theorem}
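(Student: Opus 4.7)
The plan is to reduce everything to a Brouwer-degree count on a ball around $y_0$, exploiting that $F(x_0, \cdot)$ is locally a homeomorphism at $y_0$ and that $F(x, \cdot)$ remains a local homeomorphism for every nearby $x$. Write $c := F(x_0, y_0)$ and choose $\epsilon_0 > 0$ so that $F(x_0, \cdot)|_{B(y_0, \epsilon_0)}$ is a homeomorphism onto an open set $W_0 \ni c$; denote its inverse by $G \colon W_0 \to B(y_0, \epsilon_0)$. Set $V := B(y_0, \epsilon_0/2)$ and, on a product neighbourhood of $(x_0, y_0)$, introduce the normalised composition $\tau(x, y) := G(F(x, y))$. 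Three features drive the argument: $\tau(x_0, \cdot) = \mathrm{id}_V$; $F(x, y) = c$ if and only if $\tau(x, y) = y_0$; and $\tau(x, \cdot)$ is a local homeomorphism (composition of the local homeomorphism $F(x, \cdot)$ with the homeomorphism $G$). By uniform continuity, there exist $\epsilon' > 0$ with $3\epsilon' < \epsilon_0/2$ and $\delta > 0$ so that $\|\tau(x, \cdot) - \mathrm{id}\|_{\overline{V}} < \epsilon'$ for every $x \in U := B(x_0, \delta)$.

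The heart of the proof is a three-step Brouwer-degree argument. \emph{Total degree on $V$.} The straight-line homotopy $(1-s)\tau(x, z) + sz$ stays at distance at least $\epsilon_0/2 - \epsilon' > 0$ from $y_0$ for $z \in \partial V$, so $\deg(\tau(x, \cdot), V, y_0) = \deg(\mathrm{id}, V, y_0) = +1$, and any preimage $y^\star$ of $y_0$ in $V$ lies in $B(y_0, \epsilon')$ since $|y^\star - y_0| = |y^\star - \tau(x, y^\star)| < \epsilon'$. \emph{Local degree is $+1$.} Because $V$ is connected and $\tau(x, \cdot)|_V$ is a local homeomorphism, the pointwise local degree of $\tau(x, \cdot)$ is a locally constant $\{\pm 1\}$-valued function on $V$, hence a global constant $\epsilon_x \in \{\pm 1\}$. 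To pin down its sign, deform through $\tau_t := \tau((1-t)x_0 + tx, \cdot)$; the bound $|\tau_t(y_0) - \tau_t(z)| \geq 3\epsilon' - 2\epsilon' > 0$ for $z \in \partial B(y_0, 3\epsilon')$ validates the moving-target Brouwer-degree homotopy and yields $\deg(\tau(x, \cdot), B(y_0, 3\epsilon'), \tau(x, y_0)) = +1$. Since $y_0$ is always a preimage of $\tau(x, y_0)$ and every preimage contributes $\epsilon_x$ to this degree, the integer equation $(\ell + 1)\epsilon_x = +1$ with $\ell + 1 \geq 1$ forces $\epsilon_x = +1$. \emph{Unique preimage.} Each preimage of $y_0$ under $\tau(x, \cdot)$ in $V$ then contributes $+1$ to the total degree $+1$, so there is exactly one preimage, which we define to be $g(x)$.

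The biconditional in the theorem is precisely the uniqueness just established. Continuity of $g$ on $U$ follows from a standard compactness argument: for $x_n \to x \in U$, the sequence $\{g(x_n)\} \subset \overline{B(y_0, \epsilon')}$ has a convergent subsequence with limit $y^\star$ satisfying $F(x, y^\star) = c$ and $y^\star \in V$, forcing $y^\star = g(x)$ by uniqueness, so every subsequential limit equals $g(x)$ and the full sequence converges. The main obstacle is uniqueness: the total-degree computation alone admits odd-numbered preimage configurations with cancelling local signs, and what locks the preimage count to one is combining connectedness of $V$ (which forces the local degree to be constant on $V$) with the small-ball moving-target homotopy (which fixes this constant at $+1$), thereby ruling out sign cancellation.
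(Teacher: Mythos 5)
Your proof is correct, modulo one routine point you should make explicit: before writing $\|\tau(x,\cdot)-\mathrm{id}\|_{\overline V}<\epsilon'$ you need $\tau(x,\cdot)=G\circ F(x,\cdot)$ to be defined on all of $\overline V$ for every $x\in U$, i.e. $F(\{x\}\times\overline V)\subset W_0$; this follows from compactness of $F(\{x_0\}\times\overline V)\subset W_0$ and uniform continuity of $F$ near $\{x_0\}\times\overline V$, and it is also what justifies the equivalence $F(x,y)=c\Leftrightarrow\tau(x,y)=y_0$ on all of $U\times V$. Granting that, your route is genuinely different from the paper's. The paper proves existence by showing that the boundary map $z\mapsto F(x,s_2z)/|F(x,s_2z)|$ is homotopic to the corresponding map for $x_0$, which is non-null homotopic because $F(x_0,\cdot)$ is injective on a closed ball, and then invoking Proposition \ref{prop51} (the no-retraction form of Brouwer's theorem); uniqueness is then obtained separately from the ``high wall'' Theorem \ref{propA}, whose proof rests on the connected-component counting function built for Theorem \ref{T21}: two zeros of $F(x^{\bullet},\cdot)$ in the small ball could be joined by a segment along which $|F(x^{\bullet},\cdot)|$ stays below the wall of height $N/2$ on $|y|=s_1$, a contradiction. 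You instead normalize by the local inverse $G$ and run everything inside Brouwer degree theory: total degree $+1$ on $V$ via the straight-line homotopy to the identity, constancy of the $\pm1$ local index of the local homeomorphism $\tau(x,\cdot)$ on the connected set $V$, and the moving-target homotopy on $B(y_0,3\epsilon')$ pinning that constant to $+1$, so existence and uniqueness drop out of a single degree count. What your approach buys: one mechanism instead of two, no need for the lengthy component-counting machinery behind Theorem \ref{propA}, and a uniform treatment of $m=1$ (which the paper sets aside as an exercise). What it costs: it leans on the full local-index apparatus of degree theory (local degree of a homeomorphism is $\pm1$ and locally constant, additivity over the finitely many isolated preimages), whereas the paper deliberately confines itself to the no-retraction theorem plus its own elementary, self-contained counting argument.
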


\begin{proof}
The case $m=1$ is essentially well known  and thus left as an exercise. From now on
we may assume without loss of generality  that $m\geq2$, $y_0=F(x_0,y_0)=0$ and $\Omega$
is of the form $B(x_0,r)\times B(y_0,s)$ for some $r,s>0$. Since the restriction of $F$
onto $\Omega_{x_0}$ is a local homeomorphism, there exists a positive $s_1<s$ such that
\begin{equation}\label{LOWER0}G:y\in \overline{B(y_0,s_1)}\mapsto F(x_0,y)\in\mathbb{R}^m\end{equation}
is a homeomorphism from $\overline{B(y_0,s_1)}$ onto its image $F(x_0,\overline{B(y_0,s_1)})$ under $F(x_0,\cdot)$.
Hence by considering
$y_0=F(x_0,y_0)=0$, one easily gets
\[N\doteq\min_{|y|=s_1}|F(x_0,y)|>0.\]
Since $|F|$ is uniformly continuous on the compact subset $\overline{B(x_0,\frac{r}{2})}\times \partial\overline{B(y_0,s_1)}$
of $\mathbb{R}^{n}\times\mathbb{R}^m$,
one can find a positive $r_1\leq\frac{r}{2}$ such that
\begin{equation}\label{LOWER}\min_{|x-x_0|\leq r_1, \atop |y|=s_1}|F(x,y)|\geq\frac{N}{2}.\end{equation}
Note also $|F|$ is continuous at $(x_0,y_0)$ and $|F|(x_0,y_0)=0$,
so there exists a large enough integer $k\in\mathbb{N}$ such that
\begin{equation}\label{LOWER1}\min_{|x-x_0|\leq \frac{r_1}{k}, \atop |y|\leq\frac{s_1}{k}}|F(x,y)|\leq\frac{N}{4}.\end{equation}
Similar to the deduction of (\ref{LOWER}) by replacing  $s_1$ with $s_2\doteq\frac{s_1}{k}$,
we can get a positive $r_2\leq\frac{r_1}{k}$ such that
\begin{equation}\label{LOWER2}\min_{|x-x_0|\leq r_2, \atop |y|=s_2}|F(x,y)|>0.\end{equation}
With these preparations available, we set $U=B(x_0,r_2)$ and $V=B(y_0,r_2)$.
Given any $x\in U$, the continuous map
  $z\in\mathbb{B}^{m}\mapsto F(x,s_2z)\in\mathbb{R}^m$
  sends $\mathbb{S}^{m-1}$ into $\mathbb{R}^m\backslash\{0\}$ (due to (\ref{LOWER2})) and
  \[z\in\mathbb{S}^{m-1}\mapsto \frac{F(x,s_2z)}{|F(x,s_2z)|}\in\mathbb{S}^{m-1}\]
  is homotopically equivalent to the non-null homotopic map
 \[z\in\mathbb{S}^{m-1}\mapsto \frac{F(x_0,s_2z)}{|F(x_0,s_2z)|}\in\mathbb{S}^{m-1}\]
(due to (\ref{LOWER2}) and (\ref{LOWER0})). It then follows from Proposition \ref{prop51} that there exists
a $g(x)\in V$ such that $F(x,g(x))=0$. We claim that it is impossible to
find an $x^{\bullet}\in U$ and $y^{\bullet}\in V$ with $y^{\bullet}\neq g(x^{\bullet})$
such that $F(x^{\bullet},y^{\bullet})=0$. If not, then it follows from (\ref{LOWER1}) that
the maximal value of $|F(x^{\bullet},\cdot)|$ over the straight line connecting $g(x^{\bullet})$ and $y^{\bullet}$
in $V$ is not bigger than $\frac{N}{4}$, which contradicts to (\ref{LOWER}) by Theorem \ref{propA}.
The only issue left is why the local solution map $g$ is continuous, which is a standard consequence of the
uniqueness we have just confirmed (see also the end of the proof of Theorem \ref{THM410}), thus omitted.
This finishes the proof of Theorem \ref{thm55}.
\end{proof}

\section{Theorem \ref{THM17}: construction of critical points}

Our proof of Theorem \ref{THM17} relies on the following well-known application of partition of unity \cite[p. 17]{Golu}.

\begin{lemma}\label{lemma61} Given an arbitrary closed subset $E$ of $\mathbb{R}^n$, there exists a smooth non-negative function  $g:\mathbb{R}^n\rightarrow\mathbb{R}$
such that  $E=g^{-1}(\{0\})$.
\end{lemma}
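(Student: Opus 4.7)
The plan is to realize $g$ as a convergent series of smooth bump functions, each of which singles out a small patch of the open complement $U \doteq \mathbb{R}^n \setminus E$. Since $U$ is open and $\mathbb{R}^n$ is second countable, I would first cover $U$ by a countable collection of open balls $\{B_k\}_{k=1}^{\infty}$ with $\overline{B_k} \subset U$ and $\bigcup_{k=1}^{\infty} B_k = U$; such a cover exists because every point of $U$ has a small enough open ball whose closure still lies in $U$, and Lindel\"of's theorem extracts a countable subcover.

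Next, for each $k$ I would take a standard smooth bump function $\phi_k : \mathbb{R}^n \to [0,\infty)$ that is strictly positive on $B_k$ and identically zero on $\mathbb{R}^n \setminus B_k$ (for example, a rescaled and translated copy of $x \mapsto \exp(-1/(1-|x|^2))$ on the unit ball, extended by zero). Since $\overline{B_k} \subset U$, every $\phi_k$ vanishes on $E$.

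The key step is to form
\[g(x) = \sum_{k=1}^{\infty} c_k \phi_k(x)\]
with positive weights $c_k$ chosen so small that the series converges in $C^{\infty}$. Concretely, I would set
\[c_k = \frac{1}{2^k\bigl(1 + M_k\bigr)}, \qquad M_k \doteq \max_{|\alpha| \leq k} \,\sup_{x \in \mathbb{R}^n} |\partial^{\alpha} \phi_k(x)|,\]
so that for every fixed multi-index $\alpha$ the tail $\sum_{k \geq |\alpha|} c_k \|\partial^{\alpha}\phi_k\|_{\infty}$ is dominated by $\sum_{k \geq |\alpha|} 2^{-k}$. Uniform convergence of all partial derivatives then yields that $g$ is smooth, and term-by-term non-negativity gives $g \geq 0$.

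Finally, I would verify that $g^{-1}(\{0\}) = E$. For $x \in E$, every $\phi_k(x) = 0$, so $g(x) = 0$. Conversely, for $x \in U$ there exists some $k$ with $x \in B_k$, and then $c_k \phi_k(x) > 0$ while all other summands are $\geq 0$, forcing $g(x) > 0$. I do not anticipate any genuine obstacle; the only delicate point is the simultaneous control of all derivatives, which is handled by the $C^{\infty}$-summability trick encoded in the choice of $c_k$.
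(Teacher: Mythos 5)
Your proof is correct: the countable ball cover of the complement, subordinate bump functions, and the weight choice $c_k = 2^{-k}(1+M_k)^{-1}$ forcing uniform convergence of every differentiated series is exactly the classical argument for this fact. The paper itself offers no proof, citing it as a well-known consequence of partition-of-unity techniques from Golubitsky--Guillemin, and your construction is essentially that standard proof, so there is nothing to correct.
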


 A proof of Theorem \ref{THM17} is as follows. Suppose first $E$ is of the form $\{0\}\times \widehat{E}$,
 where $\widehat{E}$ is a closed  subset of
 $\mathbb{R}^{n-1}$. According to Lemma \ref{lemma61},
 there exists a smooth non-negative function  $g:\mathbb{R}^{n-1}\rightarrow\mathbb{R}$
such that  $\widehat{E}=g^{-1}(\{0\})$. The determinant of the Jacobian matrix of
the map
\[H_g:(x,z)\in\mathbb{R}\times\mathbb{R}^{n-1}\mapsto (x^3+g(z)x,z)\in\mathbb{R}\times\mathbb{R}^{n-1}\]
is easily seen to be $3x^2+g(z)$, thus the set of critical points of $H_g$
is exactly $\{0\}\times\widehat{E}=E$. Suppose next  $E$ is a closed subset of $\mathbb{S}^{n-1}$.
 According to Lemma \ref{lemma61},
 there exists a smooth non-negative function  $g:\mathbb{R}^{n}\rightarrow\mathbb{R}$
such that  $E=g^{-1}(\{0\})$.
Direct computation via rotational symmetry shows that the determinant of the Jacobian matrix of
the map
\[H_{g}:re\mapsto (r^3-3r^2+3r+g(e)r^2)e\]
in polar coordinates $r\in[0,\infty)$ and $e\in\mathbb{S}^{n-1}$
is
\[(3r^2-6r+3+g(e)2r)\cdot(r^2-3r+3+g(e)r)^{n-1},\]
hence the set of critical points of $H_g$ is precisely $E$. Both of the examples are obviously sufficient to
establish Theorem \ref{THM17}.

\section{Further applications}

\subsection{Non-null homotopy}

Belitskii and Kerner \cite{BK} considered the  map
\[F:(x,y)=(x, \left(\begin{array}{c}y_1\\ y_2
 \end{array}
 \right))\mapsto  \left(\begin{array}{c} y_1^2+xy_1-x^3\\ y_2^2+xy_2-x^3
 \end{array}
 \right)\]
 from $\mathbb{R}\times\mathbb{R}^2$
to $\mathbb{R}^2$. Around the origin of $\mathbb{R}\times\mathbb{R}^2$, Theorem \ref{THM410}
 does not apply to
$F$
because $y=0$ is not an isolated critical point of the map $y\mapsto F(x=0,y)$ on $\mathbb{R}^2$.
We remark that, however, the  idea of
Prop.
\ref{prop51}
still works for this example. Letting $x\in(-\frac{1}{10},\frac{1}{10})\backslash\{0\}$ be  fixed dummy variable
and letting $y\in\mathbb{R}^2$
be such that $|y|=2|x|^2$, one gets
\begin{align*}
|F(x,y)|&\geq|x|\cdot|y|-\sqrt{y_1^4+y_2^4}-\sqrt{2}|x|^3\\
&\geq 2|x|^3-4|x|^4-\sqrt{2}|x|^3\\
&\geq\frac{|x|^3}{10}>0.
\end{align*}
In much the same way, one can show that $|F_t(x,y)|>0$ where $x,y$ given as before and  $F_t(x,y)$ $(t\in[0,1])$  defined by
\[F_t:(x,y)=(x, \left(\begin{array}{c}y_1\\ y_2
 \end{array}
 \right))\mapsto  \left(\begin{array}{c} ty_1^2+xy_1-tx^3\\ ty_2^2+xy_2-tx^3
 \end{array}
 \right).\]
Therefore, the continuous map $y\mapsto F(x,y)$
from the circle $|y|=2|x|^2$ into the range space $\mathbb{R}^2\backslash\{0\}$ is easily seen to be homotopically
equivalent to the dilation map $y\mapsto xy$
on the same circle. A suitable application of Prop. \ref{prop51}
together with the No Retraction Theorem yield an element $g(x)\in\mathbb{R}^2$ ($x\in(-\frac{1}{10},\frac{1}{10})\backslash\{0\}$),
$|g(x)|<2|x|^2$, such that $F(x,g(x))=0$.

\subsection{Hadamard's diffeomorphism theorem}

Compared with local injectivity that is guaranteed by the classical inverse function theorem,
many authors  including Jacques Hadamard \cite{Ga,Gordon,Gordon2,Gu,Hadamard1,Hadamard2,Nollet,Plastock,Ruzhansky,Xavier}
have been interested in establishing
global injectivity for differentiable maps.
With   Theorems \ref{strengthenedInFT} and  \ref{propA}  available,
 one can generalize
Hadamard's diffeomorphism theorem \cite{Hadamard1,Hadamard2} (see also \cite{Baj,Ga0,Ga,Gordon,Gordon2,Ka,Krantz,Palais,Plastock,Ruzhansky}) to

\begin{prop}\label{prop71}  A differentiable map $H:\mathbb{R}^m\rightarrow\mathbb{R}^m$
is a  diffeomorphism (from $\mathbb{R}^m$ onto $\mathbb{R}^m$) if and only if it has no critical points and $|H(x)|$ tends to infinity as $|x|\rightarrow\infty$.
\end{prop}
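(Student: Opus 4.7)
The plan is to prove the two directions separately, breaking sufficiency into the pieces: $H$ is a proper local diffeomorphism, $H$ is surjective, $H$ is injective, and $H^{-1}$ is differentiable.

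For necessity, suppose $H$ is a diffeomorphism of $\mathbb{R}^m$. Differentiating $H^{-1}\circ H=\mathrm{id}$ shows that the Jacobian of $H$ is everywhere invertible, so $H$ has no critical points. Continuity of $H^{-1}$ sends compact sets to compact sets, which forces $|H(x)|\to\infty$ as $|x|\to\infty$: otherwise some sequence with $|x_n|\to\infty$ and $\{H(x_n)\}$ bounded would give $x_n=H^{-1}(H(x_n))$ confined to a compact set, a contradiction.

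For sufficiency, Theorem \ref{strengthenedInFT} immediately yields that $H$ is a local diffeomorphism on all of $\mathbb{R}^m$, and the growth hypothesis makes $H$ proper (the preimage of a compact set is bounded by the growth condition and closed by continuity). Surjectivity then follows by a standard open/closed argument: $H(\mathbb{R}^m)$ is open as the image of a local homeomorphism; it is closed because if $y_n=H(x_n)\to y$, then $\{y_n\}$ is bounded, properness makes $\{x_n\}$ bounded, a convergent subsequence $x_{n_k}\to x$ exists, and continuity gives $y=H(x)$. Connectedness of $\mathbb{R}^m$ then forces $H(\mathbb{R}^m)=\mathbb{R}^m$.

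Injectivity is where Theorem \ref{propA} enters. Suppose for contradiction $H(x_0)=H(x_1)=y_0$ with $x_0\neq x_1$; translating, assume $y_0=0$. Take the straight-line path $L:[0,1]\to\mathbb{R}^m$ joining $x_0$ and $x_1$ and set $M\doteq\max_{t\in[0,1]}|H(L(t))|$. By the growth hypothesis, choose $R>0$ large enough that the image of $L$ is contained in $B(0,R)$ and $\min_{|x|=R}|H(x)|>M$ simultaneously. On the bounded open set $\Omega\doteq B(0,R)$ we then have $N\doteq\min_{x\in\partial\Omega}|H(x)|>M$, yet $L$ is a continuous curve in $\Omega$ joining distinct endpoints $x_0$ and $x_1$ with $H(x_0)=H(x_1)$ and with maximal modulus of $H$ strictly less than $N$, directly contradicting Theorem \ref{propA}. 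Once $H$ is bijective and a local diffeomorphism, the global inverse is differentiable near every point, so $H$ is a diffeomorphism.

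The main (and really only) obstacle is the simultaneous choice of the radius $R$ in the injectivity step, which is precisely what the growth hypothesis is designed to accommodate; all remaining steps flow routinely from the two quoted theorems.
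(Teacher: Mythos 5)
Your proposal is correct and follows essentially the same route as the paper: Theorem \ref{strengthenedInFT} for the local diffeomorphism property, an open-plus-closed argument (openness from absence of critical points, closedness from the growth condition) for surjectivity, and injectivity by comparing the maximum of $|H|$ on the segment joining two putative preimages with the minimum of $|H|$ on a large sphere, then invoking Theorem \ref{propA} on the ball $B(0,R)$. The only difference is that you spell out the necessity direction, which the paper leaves as a routine exercise.
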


We first establish the sufficiency part.
Let $H:\mathbb{R}^m\rightarrow\mathbb{R}^m$
be a differentiable map such that it has no critical points and  $|H(x)|$ tends to infinity as $|x|\rightarrow\infty$.
The first and second assumptions imply, respectively, that the range of $H$ is both open (see e.g. Theorem \ref{strengthenedInFT},
\cite[Prop. 3]{Li}, \cite[Lemma 4]{SRaymond}, \cite[Cor. 4]{Tao11}) and closed, hence $H$ is a surjective map onto $\mathbb{R}^m$.
Letting $y,z$ be two distinct points of $\mathbb{R}^m$, we denote
\[N_{yz}\doteq\max_{t\in[0,1]}\big|H((1-t)y+tz)\big|,\]
then pick an $R>0$ such that
\[\min_{|x|=R}|H(x)|>N_{yz}.\]
According to Theorem \ref{strengthenedInFT}, the restriction of  $H$ onto the open ball $B(x=0,R)$ is a local diffeomorphism,
thus local homeomorphism as well. Finally, an application of  Theorem \ref{propA}
yields $H(x)\neq H(y)$. In other words, $H$ is an injective map.
Therefore,  $H:\mathbb{R}^m\rightarrow\mathbb{R}^m$ is a bijective map without critical points,
which combined with Theorem \ref{strengthenedInFT} shows that $H$ is a diffeomorphism from $\mathbb{R}^m$ onto $\mathbb{R}^m$.
The necessity part regarding why $|H(x)|\rightarrow\infty$ as $|x|\rightarrow\infty$
can be argued routinely by contradiction arguments, thus
details are omitted. This finishes the proof of Proposition \ref{prop71}.

\subsection{Non-existence of 3-dimensional fields over $\mathbb{R}$}
In the history people made great effort
to construct 3-dimensional fields over $\mathbb{R}$, but failed.
We refer the interested reader to \cite[Thm. 6.2.9]{Krantz} by Krantz and Parks
for an elegant proof why such attempts should  never be successful.
Based on Theorem \ref{THM12}, we  provide another approach
although it is essentially the same as that just mentioned in \cite{Krantz}.
Suppose there exists a multiplication operation $\circledast$ on $\mathbb{R}^3$
such that $(\mathbb{R}^3,+,\circledast)$ forms a field and
$(\lambda x)\circledast y=x\circledast(\lambda y)=\lambda(x\circledast y)$
for all $x,y\in\mathbb{R}^3$ and $\lambda\in\mathbb{R}$.
Direct computation shows that the directional derivative of the map
\[H:x\in\mathbb{R}^3\mapsto x\circledast x\in\mathbb{R}^3\]
at $x\in\mathbb{R}^3$ with respect to an arbitrary vector $e\in\mathbb{R}^3$ is  given by
\[\frac{\partial G}{\partial e}(x)=(2x)\circledast e.\]
Thus $G$ is differentiable at $x$ (see e.g. \cite[Thm. 6.2]{Munkres}, \cite[Thm. 2-8]{Spivak}, \cite[Thm. 17.3.8]{Tao})
and the corresponding Jacobian matrix $J_G(x)$ can be identified with the linear map
\[e\mapsto (2x)\circledast e\]
on $\mathbb{R}^3$. Obviously, $J_G(x)$ is invertible if and only if $x\neq0$. In other words,
$x=0$ is the only critical point of $G$.
According to Theorem \ref{THM12}, $H$ is a homeomorphism on an open neighbourhood of the origin of $\mathbb{R}^3$.
On the other hand, letting $\mu$ denote the multiplicative unit of $(\mathbb{R}^3,\circledast)$,
one has
$(\lambda \mu)\circledast(\lambda \mu)=\lambda^2 \mu $ for all $\lambda\in\mathbb{R}$.
So $H$ is not an injective map on any neighbourhood of the origin of $\mathbb{R}^3$, a contradiction.

\subsection{Open maps}

Lemma \ref{Lemma46} is the main result of the author's paper \cite{Li}, which will appear soon
in Real Analysis Exchange. Regarding the countable infinity assumption therein,  the anonymous referee for the article  suggested
that it may be improved to zero one-dimensional Hausdorff measure.
We  can confirm this prediction is indeed true.

\begin{prop}\label{THM72}
Let  $\Omega$ be an open subset of $\mathbb{R}^m$ ($m\geq2$), and let $H:\Omega\rightarrow\mathbb{R}^m$
be a $C^1$ map such that  the one-dimensional Hausdorff measure of its critical points is zero. Then $H$ is an open map.
\end{prop}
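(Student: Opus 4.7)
My plan is to prove that $H$ is open at each $x_0 \in \Omega$ via a Brouwer degree argument on a carefully chosen ball $B(x_0, r)$. Writing $y_0 := H(x_0)$ and $C$ for the set of critical points of $H$, the goal is to exhibit $B(y_0, M/2) \subset H(B(x_0, r))$ for some $M > 0$.

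The first task is to pick $r$ so that $M := \min_{|x - x_0| = r}|H(x) - y_0|$ is positive. Fix a small $r_0$ with $\overline{B(x_0, r_0)} \subset \Omega$ and decompose the preimage $A := H^{-1}(\{y_0\}) \cap \overline{B(x_0, r_0)}$ as $(A \cap C) \cup (A \setminus C)$: the first piece inherits $\mathcal{H}^1$-measure zero from the hypothesis on $C$, while for the second, every $x \in A \setminus C$ has a neighbourhood on which $H$ is an injective local diffeomorphism by Theorem~\ref{strengthenedInFT}, so $x$ is isolated in $A$ and $A \setminus C$ is discrete and hence countable. Thus $\mathcal{H}^1(A) = 0$, and because $x \mapsto |x - x_0|$ is $1$-Lipschitz the set of ``bad'' radii $\{|x - x_0| : x \in A\} \subset (0, r_0)$ is Lebesgue-null, so almost every $r \in (0, r_0)$ gives the desired positive $M$.

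Next I would set up the degree computation. A non-degenerate connected subset of $\mathbb{R}^m$ has $\mathcal{H}^1$-measure at least its diameter, so the hypothesis $\mathcal{H}^1(C) = 0$ forces $C$ to be totally disconnected; closed totally disconnected subsets of $\mathbb{R}^m$ do not separate any connected open set when $m \geq 2$, so $B(x_0, r) \setminus C$ is connected, and the continuous non-vanishing function $\det J_H$ has a single sign there. Sard's theorem for $C^1$ self-maps gives that $H(C)$ has Lebesgue measure zero. Since $C$ has empty interior, regular points cluster at $x_0$, which together with continuity of $H$ forces the open set $H(B(x_0, r) \setminus C) \cap B(y_0, M/2)$ to be non-empty; removing the null set $H(C)$ from it yields a regular value $y^* \in B(y_0, M/2)$ whose preimage in $B(x_0, r)$ is a non-empty finite collection of regular points of common orientation. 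Hence $\deg(H, B(x_0, r), y^*) \neq 0$.

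To conclude, I would invoke homotopy invariance: the whole ball $B(y_0, M/2)$ lies in a single connected component of $\mathbb{R}^m \setminus H(\partial B(x_0, r))$, so $\deg(H, B(x_0, r), y) = \deg(H, B(x_0, r), y^*) \neq 0$ for every $y \in B(y_0, M/2)$, and the existence property of the degree places each such $y$ in $H(B(x_0, r))$. The main obstacle I foresee sits in the first step, where one must absorb uncountably many critical preimages of $y_0$ (controlled by $\mathcal{H}^1(C) = 0$) together with the countably many regular ones (controlled by local injectivity) into a single $\mathcal{H}^1$-null set before slicing by radii; this is precisely where the hypothesis is strictly stronger than the ``countable critical set'' assumption of Lemma~\ref{Lemma46}.
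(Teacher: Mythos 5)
Your argument is correct, but there is nothing in the paper to measure it against: Proposition~\ref{THM72} is stated there without proof (the author only reports that the referee's prediction ``is indeed true'' and defers even the proof of the stronger Theorem~\ref{THM73} to future work). So your degree-theoretic proof stands on its own, and it does work. The two essential ideas are sound: (1) the choice of a good radius, where you combine the hypothesis $\mathscr{H}_1(C)=0$ with the countability of the regular preimages of $y_0$ (local injectivity from Theorem~\ref{strengthenedInFT}, or just the classical $C^1$ inverse function theorem) to make $A=H^{-1}(\{y_0\})\cap\overline{B(x_0,r_0)}$ an $\mathscr{H}_1$-null set, and then push it forward under the $1$-Lipschitz map $x\mapsto|x-x_0|$ to find a sphere avoiding $A$ --- this slicing step is exactly where the hypothesis strengthens the countability assumption of Lemma~\ref{Lemma46}; and (2) the degree computation, where connectedness of $B(x_0,r)\setminus C$ gives a single sign of $\det J_H$, Sard (valid for $C^1$ maps between equal dimensions) gives a regular value $y^\ast$ near $y_0$, and the regular-value formula plus homotopy invariance plus the existence property yield $B(y_0,M/2)\subset H(B(x_0,r))$. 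This is a heavier toolkit than the paper uses elsewhere (it works with path-components, Jordan--Brouwer and the no-retraction theorem rather than full Brouwer degree), but it is in the same topological spirit as Proposition~\ref{prop51} and is arguably shorter.

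Two small points deserve tightening. First, ``closed totally disconnected sets do not separate a connected open set'' needs a word of justification: either note that $C$ is $\sigma$-compact and compact totally disconnected sets are zero-dimensional, so the Hurewicz--Wallman separation theorem applies, or, more cheaply, observe that $\mathscr{H}_1(C)=0$ implies $\mathscr{H}_{m-1}(C)=0$ for $m\geq2$, and sets of $(m-1)$-dimensional Hausdorff measure zero never disconnect a domain. Second, when you pick $y^\ast$ you should discard $H(C\cap\overline{B(x_0,r)})$ (Lebesgue-null either by $C^1$ Sard or because it is the locally Lipschitz image of an $\mathscr{H}_1$-null, hence $\mathscr{H}_m$-null, set) and then use compactness of $H^{-1}(\{y^\ast\})\cap\overline{B(x_0,r)}$ together with local injectivity at regular points to get finiteness of the fibre; with $|y^\ast-y_0|<M/2$ no preimage sits on $\partial B(x_0,r)$, so the degree is well defined and your conclusion follows.
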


In a summer project supervised by the author, Lijie Fang (Shandong Univ.), Dazhou Liu (Chongqing Three Gorges Univ.)
and Yueyue Wu (Central South University)
gave a further generalization  as follows.

\begin{theorem}\label{THM73}
Let  $\Omega$ be an open subset of $\mathbb{R}^m$ ($m\geq2$), and let $F:\Omega\rightarrow\mathbb{R}^m$
be a $C^1$ map such that
 $\mathscr{H}_{m-1}(\mbox{CP})=0$ and
 $\mathscr{H}_1(\mbox{CP}\cap F^{-1}(\{y\}))=0$ for all critical values  $y$.
 Then $H$ is an open map.
\end{theorem}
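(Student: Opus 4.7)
The plan is to reduce to the critical-point case, since Theorem~\ref{strengthenedInFT} already ensures openness at regular points. Given $x_0\in\mbox{CP}$ with $y_0:=F(x_0)$ (necessarily a critical value), and $s>0$ with $\mathscr{B}(x_0,s)\subset\Omega$, my first step is to locate a radius $\rho\in(s/2,s)$ at which the sphere $\partial B(x_0,\rho)$ misses $F^{-1}(\{y_0\})$. Writing $F^{-1}(\{y_0\})\cap\mathscr{B}(x_0,s)$ as the union of its critical part (which is $\mathscr{H}_1$-null by hypothesis) and its regular part (a relatively discrete, hence countable, subset of the open set $\mathscr{B}(x_0,s)\setminus\mbox{CP}$ by Theorem~\ref{strengthenedInFT}, and a fortiori $\mathscr{H}_1$-null), I conclude the whole preimage is $\mathscr{H}_1$-null. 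Pushing it through the $1$-Lipschitz radial map $x\mapsto|x-x_0|$ gives a Lebesgue-null exceptional set of radii, so such $\rho$ is abundant. Set $N:=\min_{|x-x_0|=\rho}|F(x)-y_0|>0$.

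The main step is to show $B(y_0,N)\subset F(\mathscr{B}(x_0,\rho))$. Define
\[V:=F(\mathscr{B}(x_0,\rho))\cap B(y_0,N),\quad U:=F(\mathscr{B}(x_0,\rho)\setminus\mbox{CP}),\quad W:=B(y_0,N)\setminus F(\mbox{CP}\cap\mathscr{B}(x_0,\rho)).\]
Because $F(\partial B(x_0,\rho))$ is disjoint from $\overline{B(y_0,N)}$, $V$ coincides with $F(\overline{\mathscr{B}(x_0,\rho)})\cap B(y_0,N)$ and is therefore relatively closed in $B(y_0,N)$; Theorem~\ref{strengthenedInFT} makes $U$ open; and since $F$ is $C^1$ and hence Lipschitz on $\overline{\mathscr{B}(x_0,\rho)}$, the hypothesis $\mathscr{H}_{m-1}(\mbox{CP})=0$ yields $\mathscr{H}_{m-1}(F(\mbox{CP}\cap\mathscr{B}(x_0,\rho)))=0$, so $W$ is open and dense in $B(y_0,N)$. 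A routine check of definitions gives $V\cap W=U\cap W$, making $V\cap W$ both relatively open in $W$ (inherited from $U$) and relatively closed in $W$ (inherited from $V$).

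The decisive topological fact is that $W$ is (path-)connected, because an $\mathscr{H}_{m-1}$-null set cannot separate an open ball in $\mathbb{R}^m$. Granted this, $V\cap W\in\{\emptyset,W\}$. If $V\cap W=W$, then $V\supset W$ is a relatively closed subset of $B(y_0,N)$ containing a dense subset, so $V=B(y_0,N)$, and the proof is complete. Otherwise $V\cap W=\emptyset$ forces $U\cap B(y_0,N)\subset F(\mbox{CP})$; since $U\cap B(y_0,N)$ is open in $\mathbb{R}^m$ while $F(\mbox{CP})$ is $\mathscr{H}_{m-1}$-null, this intersection must be empty, so $F(\mathscr{B}(x_0,\rho)\setminus\mbox{CP})$ avoids $B(y_0,N)$ altogether. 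But continuity of $F$ at $x_0$ provides a $\delta>0$ with $F(B(x_0,\delta))\subset B(y_0,N)$ and $B(x_0,\delta)\subset\mathscr{B}(x_0,\rho)$, whence $B(x_0,\delta)\setminus\mbox{CP}=\emptyset$, i.e.\ the open ball $B(x_0,\delta)\subset\mbox{CP}$, contradicting $\mathscr{H}_{m-1}(\mbox{CP})=0$.

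The main obstacle I foresee is justifying the non-separation claim cleanly. An elementary direct route is: for any $p\in W$ the radial projection $\pi_p:x\mapsto(x-p)/|x-p|$ is Lipschitz on each annulus around $p$, so $\pi_p(F(\mbox{CP}\cap\mathscr{B}(x_0,\rho)))$ is $\mathscr{H}_{m-1}$-null in $S^{m-1}$; most directions from $p$ are therefore free of $F(\mbox{CP})$, and a Fubini-type argument inside the convex set $B(y_0,N)$ yields, for any $p,q\in W$, a positive-Lebesgue-measure set of intermediate vertices $r$ for which the polygonal path $p\to r\to q$ stays in $W$. Delineating exactly where each hypothesis enters (the $\mathscr{H}_{m-1}$-bound controls both the connectedness dichotomy and the final contradiction, whereas the $\mathscr{H}_1$-bound governs only the radius selection) is the technical point that deserves the most care; an alternative route through Proposition~\ref{prop51} and Sard's theorem is possible but strikes me as less transparent than the connectedness argument above.
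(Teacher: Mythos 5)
The paper does not actually prove Theorem \ref{THM73}: it credits the result to Fang, Liu and Wu and only announces that a full proof will appear in due course, so there is no in-paper argument to compare yours against. Judged on its own, your proposal is essentially sound, and it uses the two hypotheses exactly where they are needed: the $\mathscr{H}_1$-bound on $\mbox{CP}\cap F^{-1}(\{y_0\})$ (together with the fact that the regular points of the fibre form a discrete, hence countable, set) makes the fibre $\mathscr{H}_1$-null, so its image under the $1$-Lipschitz radial map is Lebesgue-null and a sphere $\partial B(x_0,\rho)$ avoiding $F^{-1}(\{y_0\})$ exists, giving $N>0$; the $\mathscr{H}_{m-1}$-bound on $\mbox{CP}$, pushed forward by the Lipschitz property of the $C^1$ map on the closed ball, makes $W=B(y_0,N)\setminus F(\mbox{CP}\cap\mathscr{B}(x_0,\rho))$ open, dense and (by your radial-projection/Fubini non-separation argument, which is correct) path-connected, and it also kills the empty branch of the clopen dichotomy via the final contradiction $B(x_0,\delta)\subset\mbox{CP}$.

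Two small inaccuracies should be repaired in a final write-up, though neither is fatal. First, $F(\partial B(x_0,\rho))$ is disjoint from the open ball $B(y_0,N)$ but not necessarily from $\overline{B(y_0,N)}$, since the minimum defining $N$ is attained; only the open-ball disjointness is needed (relative closedness of $V$ in $B(y_0,N)$ already follows from compactness of $F(\mathscr{B}(x_0,\rho))$). Second, $U=F(\mathscr{B}(x_0,\rho)\setminus\mbox{CP})$ as you define it need not be open: a regular point lying on the sphere $\partial B(x_0,\rho)$ only contributes a ``half-neighbourhood'' image. What is true, and is all your argument uses, is that $U\cap B(y_0,N)$ is open and that $U\cap W$ is relatively open in $W$: any $y\in U\cap B(y_0,N)$ has a regular preimage in $\mathscr{B}(x_0,\rho)$ which, precisely because sphere points map outside $B(y_0,N)$, must lie in the open ball $B(x_0,\rho)$, and Theorem \ref{strengthenedInFT} then supplies a full open neighbourhood of $y$ contained in $U$; alternatively, simply define $U$ using the open ball from the start. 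With these repairs the clopen argument in the connected set $W$, the density argument yielding $B(y_0,N)\subset F(\mathscr{B}(x_0,\rho))$, and the reduction to critical points (openness at regular points being immediate from Theorem \ref{strengthenedInFT}) all go through.
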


To be clear, $\mathscr{H}_s$ ($s\geq0$) denotes the $s$-dimensional Hausdorff measure and $\mbox{CP}$
is short for the set of critical points of $F$.
A full proof of Theorem \ref{THM73} will be presented in due course.

\end{document}